\newtheorem{thm}{Theorem}[section]
\newtheorem{lemma}[thm]{Lemma}
\newtheorem{prop}[thm]{Proposition}
\newtheorem{cor}[thm]{Corollary}
\theoremstyle{definition}
\newtheorem{rmk}[thm]{Remark}
\newtheorem{defn}[thm]{Definition}
\newcommand{\ep}{\epsilon}
\newcommand{\vep}{\varepsilon}
\newcommand{\pa}{\partial}
\newcommand{\B}{\mathbb{B}}
\newcommand{\N}{\mathbb{N}}
\newcommand{\R}{\mathbb{R}}
\renewcommand{\S}{\mathbb{S}}
\newcommand{\mcc}{\mathcal{C}}
\newcommand{\mci}{\mathcal{I}}
\newcommand{\mcm}{\mathcal{M}}
\newcommand{\mcn}{\mathcal{N}}
\newcommand{\mcp}{\mathcal{P}}
\newcommand{\tif}{\tilde{f}}
\newcommand{\tig}{\tilde{g}}
\newcommand{\tih}{\tilde{h}}
\newcommand{\wtf}{\widetilde{F}}
\newcommand{\wtu}{\widetilde{U}}
\newcommand{\wtv}{\widetilde{V}}
\newcommand{\wPh}{\widetilde{\Phi}}
\newcommand{\bx}{\bar{x}}
\newcommand{\hf}{\hat{f}}
\newcommand{\hg}{\hat{g}}
\newcommand{\ou}{\overline{U}}
\newcommand{\tni}{\textnormal{II}}
\renewcommand{\(}{\left(}
\renewcommand{\)}{\right)}
\numberwithin{equation}{section}
\begin{document}
\title[Compactness of scalar-flat conformal metrics]{Compactness of scalar-flat conformal metrics \\
on low-dimensional manifolds \\
with constant mean curvature on boundary}

\author{Seunghyeok Kim}
\address[Seunghyeok Kim]{Department of Mathematics and Research Institute for Natural Sciences, College of Natural Sciences, Hanyang University, 222 Wangsimni-ro Seongdong-gu, Seoul 04763, Republic of Korea}
\email{shkim0401@hanyang.ac.kr shkim0401@gmail.com}

\author{Monica Musso}
\address[Monica Musso]{Department of Mathematical Sciences, University of Bath, Bath BA2 7AY, United Kingdom, and Facultad de Matem\'{a}ticas, Pontificia Universidad Cat\'{o}lica de Chile, Avenida Vicu\~{n}a Mackenna 4860, Santiago, Chile}
\email{m.musso@bath.ac.uk}

\author{Juncheng Wei}
\address[Juncheng Wei] {Department of Mathematics, University of British Columbia, Vancouver, B.C., Canada, V6T 1Z2}
\email{jcwei@math.ubc.ca}

\begin{abstract}
We concern $C^2$-compactness of the solution set of the boundary Yamabe problem on smooth compact Riemannian manifolds with boundary provided that their dimensions are $4$, $5$ or $6$.
By conducting a quantitative analysis of a linear equation associated with the problem,
we prove that the trace-free second fundamental form must vanish at possible blow-up points of a sequence of blowing-up solutions.
Applying this result and the positive mass theorem, we deduce the $C^2$-compactness for all 4-manifolds (which may be non-umbilic).
For the 5-dimensional case, we also establish that a sum of the second-order derivatives of the trace-free second fundamental form is non-negative at possible blow-up points.
We essentially use this fact to obtain the $C^2$-compactness for all 5-manifolds.
Finally, we show that the $C^2$-compactness on 6-manifolds is true if the trace-free second fundamental form on the boundary never vanishes.
\end{abstract}

\date{\today}
\subjclass[2010]{35B40, 35J65, 35R01, 53A30, 53C21}
\keywords{Boundary Yamabe problem, Compactness, Blow-up analysis.}
\maketitle

\section{Introduction}
Let $(M, g)$ be an $N$-dimensional ($N \ge 3$) smooth compact Riemannian manifold with boundary $\pa M$.
Let also $\Delta_g$ be the Laplace-Beltrami operator on $M$, $R[g]$ the scalar curvature on $M$, $\nu$ the inward normal vector to $\pa M$,
and $H[g]$ be the mean curvature of $\pa M$.
In \cite{Es}, Escobar asked if $(M,g)$ can be conformally deformed to a scalar-flat manifold with boundary of constant mean curvature.
This problem, which we will call the boundary Yamabe problem, can be understood as a generalization of the Riemann mapping theorem
and is equivalent to finding a positive smooth solution to a nonlinear boundary value problem with critical exponent
\begin{equation}\label{eq_Yamabe}
\begin{cases}
L_g U = 0 &\text{in } M,\\
B_g U = Q(M,\pa M) U^{N \over N-2} &\text{on } \pa M.
\end{cases}
\end{equation}
Here $L_g$ is the conformal Laplacian and $B_g$ is the associated conformal boundary operator defined as
\[L_g = -\Delta_g + \frac{N-2}{4(N-1)} R[g] \quad \text{and} \quad
B_g = -\frac{\pa}{\pa \nu} + \frac{N-2}{2} H[g],\]
and $Q(M,\pa M)$ is a constant whose sign is determined by the conformal structure of $M$.

Weak solutions to \eqref{eq_Yamabe} correspond to critical points of the functional
\[Q(U) = \frac{\int_M (|\nabla_g U|_g^2 + \frac{N-2}{4(N-1)} R[g] U^2) dv_g + \int_{\pa M} H[g] U^2 dv_h}{(\int_{\pa M} |U|^{2(N-1) \over N-2} dv_h)^{N-2 \over N-1}}\]
defined for an element $U$ in the Sobolev space $H^1(M)$ with $U \ne 0$ on $\pa M$,
where $\nabla_g$ represents the gradient on $(M,g)$, $h$ is the restriction of the metric $g$ on $\pa M$,
and $dv_g$ and $dv_h$ are the volume form on $M$ and on $\pa M$, respectively.
Escobar \cite{Es} proved that the Sobolev quotient
\[Q(M,\pa M) = \inf \left\{Q(U): U \in H^1(M),\, U \ne 0 \text{ on } \pa M \right\}\]
attains its minimizer if $Q(M,\pa M) < Q(\B^N,\pa \B^N)$ where the unit ball $\B^N = \{x \in \R^N: |x| < 1\}$ is endowed with the Euclidean metric.
This is analogous to the observation of Aubin \cite{Au} for the classical Yamabe problem.

Thanks to the effort of several researchers, the existence of a solution to \eqref{eq_Yamabe} is now well-established:
Escobar \cite{Es, Es2}, Marques \cite{Ma2, Ma3}, Almaraz \cite{Al2} and Chen \cite{Ch} found a minimizer of the functional $Q$ for almost all manifolds.
By applying the barycenter technique of Bahri and Coron, Mayer and Ndiaye \cite{MN} covered all the remaining cases.
Regularity property of \eqref{eq_Yamabe} was investigated by Cherrier \cite{Cr}.

Concerning multiplicity of solutions to \eqref{eq_Yamabe}, the only interesting case is when $Q(M,\pa M) > 0$.
If $Q(M,\pa M) < 0$, the conformal covariance of the operators $L_g$ and $B_g$ shows that \eqref{eq_Yamabe} has only one solution. 
If $Q(M,\pa M) = 0$, it is a linear equation and its solution is unique up to positive multiplicative constants.
On the other hand, the case that $M$ is conformally equivalent to the unit ball $\B^N$ (so that $Q(M,\pa M) = Q(\B^N,\pa \B^N) > 0$) is special,
and the solution set of \eqref{eq_Yamabe} was completely classified thanks to the works of Escobar \cite{Es3} and Li and Zhu \cite{LZhu2}; see Subsection \ref{subsec_bubble}.

In about two decades, several results on $C^2(M)$-compactness of the solution set of \eqref{eq_Yamabe} appeared under the assumption that $Q(M,\pa M) > 0$.
Felli and Ould Ahmedou \cite{FO, FO2} deduced compactness results for locally conformally flat manifolds and 3-manifolds provided that their boundaries are umbilic.
Very recently, the umbilicity condition was lifted for 3-manifolds by Almaraz et al. \cite{AdQW}.
If the dimension $N$ of the manifold $M$ satisfies $N \ge 7$ and the trace-free second-fundamental form on $\pa M$ is nonzero everywhere,
the result of Almaraz \cite{Al} shows that the $C^2(M)$-compactness continues to hold.
If either $N > 8$ and the Weyl tensor of $M$ never vanishes on $\pa M$, or $N = 8$ and the Weyl tensor of $\pa M$ never vanishes on $\pa M$,
the $C^2(M)$-compactness is still true for manifolds $M$ with umbilic boundary, as shown by Ghimenti and Micheletti \cite{GM}.

Compactness results for other boundary Yamabe-type problems can be found in Han and Lin \cite{HL}, Djadli et al. \cite{DMO, DMO2}, Disconzi and Khuri \cite{DK}, and so on.
By using the compactness property, C\'adenas and Sierra \cite{CS} yielded uniqueness of solutions to \eqref{eq_Yamabe} for some manifolds whose metrics are non-degenerate.

As far as the authors know, compactness results on \eqref{eq_Yamabe} have been known only for manifolds with boundary of dimension $N = 3$ or $N \ge 7$, unless manifolds are locally conformally flat.
The main purpose of this paper is to treat all manifolds with boundary of dimension $N = 4$ and $5$, and generic manifolds with boundary of dimension $N = 6$.
\begin{thm}\label{thm_main}
For $N = 4, 5, 6$, let $(M, g)$ be an $N$-dimensional smooth compact Riemannian manifold with boundary $\pa M$
such that $Q(M, \pa M) > 0$ and $M$ is not conformally equivalent to the unit ball $\B^N$.
If $N = 6$, we also assume that the trace-free second-fundamental form never vanishes on $\pa M$.
Then, for any $\vep_0 > 0$ small, there exists a constant $C > 1$ depending only on $M, g$ and $\vep_0$ such that
\[C^{-1} \le U \le C \quad \text{on } M \quad \text{and} \quad \|U\|_{C^2(M)} \le C\]
for any solution $U \in H^1(M)$ to
\begin{equation}\label{eq_Yamabe_2}
\begin{cases}
L_g U = 0 &\text{in } M,\\
B_g U = Q(M,\pa M) U^p &\text{on } \pa M
\end{cases}
\end{equation}
with $p \in [1+\vep_0, \frac{N}{N-2}]$.
\end{thm}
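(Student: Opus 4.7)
The plan is to argue by contradiction following the standard blow-up strategy for Yamabe-type compactness theorems, as pioneered by Schoen and adapted to the boundary setting by Felli--Ould Ahmedou, Almaraz, Marques, and others. Suppose there exist sequences $p_i \in [1+\vep_0, \frac{N}{N-2}]$ and positive solutions $U_i \in H^1(M)$ of \eqref{eq_Yamabe_2} with $p = p_i$ such that $\max_M U_i \to \infty$. After extracting a subsequence, one isolates the set of blow-up points on $\pa M$ (interior blow-up is ruled out by the equation $L_g U_i = 0$ combined with a maximum principle plus the standard Bernstein-type argument). By the selection process of Schoen adapted to the boundary, one reduces to a finite collection of blow-up points and, via the now-classical machinery, shows that each blow-up point is \emph{isolated and simple}: near a blow-up point $\bx \in \pa M$, in Fermi coordinates the rescaled solutions converge to the standard bubble $W$ classified by Escobar and Li--Zhu. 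Positivity of $Q(M,\pa M)$ together with $M \not\cong \B^N$ excludes the possibility of the limiting profile being global.

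The heart of the proof is to derive sharp geometric information at each blow-up point $\bx$. For this, instead of working directly with the pure bubble, I would construct a refined ansatz $W_i + \Phi_i$ in which the correction $\Phi_i$ is obtained by quantitatively inverting the linearized operator around $W$. This is the ``quantitative analysis of a linear equation'' referred to in the abstract, and it is forced by the fact that in dimensions $N \le 6$ the pure-bubble Pohozaev integrand is not sensitive enough to read off the geometry of $\pa M$: the correction contributes at the same order as the term one wants to extract. Plugging the refined profile into the local Pohozaev identity on a half-ball around $\bx$ in Fermi coordinates, expanding $g$ as in Marques, and matching the leading geometric contributions, one derives that the trace-free second fundamental form must vanish at $\bx$:
\[
\pi(\bx) = 0.
\]
For $N=5$ a finer expansion of the same identity is needed to show the additional non-negativity of a sum of second-order derivatives of $\pi$ at $\bx$, which controls the next order in the asymptotic expansion.

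With these geometric rigidity statements in hand, the compactness follows dimension by dimension. For $N = 6$ the hypothesis $\pi \ne 0$ everywhere on $\pa M$ directly contradicts $\pi(\bx) = 0$, so no blow-up can occur. For $N = 4, 5$ the vanishing of $\pi$ (and, when $N=5$, the second-order information) allows one to identify the next-order term in the expansion of the Green function of $(L_g, B_g)$ at $\bx$ with a boundary ``mass''. A global Pohozaev identity, applied to $U_i$ on a region shrinking to $\bx$, expresses this mass as the limit of the boundary integral and forces it to be non-positive; the positive mass theorem for asymptotically flat manifolds with non-compact boundary (Almaraz--Barbosa--de Lima) gives the opposite strict inequality unless $(M,g)$ is conformally the ball, which is excluded. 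This contradiction yields the uniform $L^\infty$ upper bound; a Harnack inequality of Cherrier--type, combined with standard elliptic regularity up to the boundary and the equation $L_g U = 0$, then promotes the upper bound to the two-sided bound and the $C^2(M)$ estimate, completing the proof.

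The main obstacle I anticipate is Step 2, the quantitative linear analysis and the ensuing low-dimensional Pohozaev computation that delivers $\pi(\bx) = 0$. In dimensions $N \le 6$ the correction $\Phi_i$ is not a negligible perturbation of the bubble in the relevant norms, so one cannot simply run the argument of Almaraz valid for $N \ge 7$; precise pointwise and weighted estimates on $\Phi_i$ and its derivatives, tailored to the half-space geometry, are required so that the cross-terms in the Pohozaev identity can be computed to the necessary accuracy without being swamped by error terms. Once this quantitative control is in place, the remaining steps, while technically involved, follow established patterns.
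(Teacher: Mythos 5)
Your outline coincides with the paper's strategy step for step, but the one place you explicitly defer --- ``Step 2, the quantitative linear analysis'' --- is precisely where the paper's new idea lives, and without it the argument does not close. The difficulty is not that the correction contributes ``at the same order'' as the term you want to extract, nor that cross-terms might be ``swamped by error terms'': it is a sign problem. Writing the Pohozaev expansion as $F_m(W_{1,0},W_{1,0}) + [F_m(W_{1,0},\Psi_m)+F_m(\Psi_m,W_{1,0})] + \text{errors}$, the pure-bubble term comes out \emph{negative} (for $N=5$ it equals $-\frac{1}{64}|\S^3|\ep_m^2\|\pi_m\|^2$ to leading order, and for $N=4$ the coefficient of $\ep_m^2\log(\rho\ep_m^{-1})$ is negative), so one must bound the cross-terms from \emph{below} sharply enough to overturn this --- and $\Psi_m$ is not explicit. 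The paper's resolution is: (i) decompose $\Psi=\Phi+\Xi$, where $\Phi$ is an explicit rational function carrying two free parameters $a_1,a_2$ and matching the interior equation, and $\Xi$ is harmonic with a computable Neumann defect $q$; (ii) eliminate the inaccessible $\Xi$-contribution via the second-variation inequality $\int_{\R^N_+}|\nabla\Xi|^2\,dx - N\int_{\R^n}w_{1,0}^{2/(N-2)}\Xi^2\,d\bx \ge 0$, which holds because $\Xi$ is tangent to the Nehari manifold at the Sobolev-trace minimizer $W_{1,0}$; (iii) compute the surviving explicit integrals and \emph{maximize} the resulting quadratic polynomial $P(a_1,a_2)$, whose maximum is barely positive (e.g.\ $3/2560$ at $(a_1,a_2)=(-63/4,105/8)$ for $N=5$). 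For $N=4$ an additional one-parameter deformation $\Phi_\delta$ is required, since the $a_1,a_2$ terms are absorbed into $O(\ep_m^2)$ while the main order is $\ep_m^2|\log\ep_m|$. None of this is ``quantitative control'' in the generic sense you describe; it is a specific variational and optimization mechanism, and your proposal as written gives no route to it.

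Two smaller points. First, the implication ``isolated $\Rightarrow$ isolated simple'' is not classical machinery available before the geometric analysis: in the paper it is itself a consequence of the vanishing theorem through the local sign restriction, so the logical order is vanishing first, simplicity second. Second, for $N=5$ the mass of the conformal blow-up is not determined by the Green's function alone: Corollary \ref{cor_mass} gives $m_0 = -\frac{32}{9}\lim_{\rho\to0}\mcp'(G_{y_0},\rho) - \frac{1}{48}|\S^3|\,\pi[\tig_0]_{ij,ij}(y_0)$, so the non-negativity of $\pi[\tig_0]_{ij,ij}(y_0)$ enters the final contradiction directly through the mass formula, not merely as control of ``the next order in the asymptotic expansion''.
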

\noindent The transversality argument shows that if $N \ge 4$, the set of metrics on $M$ whose trace-free second fundamental form on $\pa M$ vanishes nowhere is open and dense in the space of all Riemannian manifolds on $M$.
This justifies the terminology `generic' used above.
Also, as can be observed in Theorem \ref{thm_main}, we will deal with a slightly generalized equation \eqref{eq_Yamabe_2} instead of \eqref{eq_Yamabe}.

\medskip
Our strategy follows the argument in the lecture note \cite{Sc} of Schoen
where he raised the question of $C^2$-compactness of the solution set of the classical Yamabe problem
and resolved it for locally conformally flat manifolds.
It has been further developed by Li and Zhu \cite{LZhu}, Druet \cite{Dr}, Marques \cite{Ma}, Li and Zhang \cite{LZ, LZ2} and Khuri et al. \cite{KMS}.
Furthermore, Li \cite{Li} and Li and Xiong \cite{LX} studied compactness results of the Q-curvature problem, which is the fourth-order analogue of the Yamabe problem.

Once Theorem \ref{thm_main} is established, one can deduce the existence of a solution to \eqref{eq_Yamabe} by applying the standard Leray-Schauder degree argument as in \cite{FO, HL}.
There also should exist the strong Morse inequality in our framework as in \cite[Theorem 1.4]{KMS}.

\medskip
We leave two more remarks for the theorem.
\begin{rmk}
The key idea of our main theorem is to perform a fine analysis of associated linearized equations with \eqref{eq_Yamabe_2}
in proving that the trace-free second fundamental form mush vanish at possible blow-up points of a sequence of blowing-up solutions.
Interestingly, this process is somehow related to the way that Marques \cite{Ma3} constructed test functions in his existence theorem for \eqref{eq_Yamabe} on low-dimensional manifolds with non-umbilic boundary.
Indeed, his test functions consist of not only truncated bubbles but also some additive correction terms.
This is a distinctive feature of the boundary Yamabe problem compared with the classical one.

Our argument can be be further applied in the following settings.
\begin{enumerate}
\item Based on the existence results of Marques \cite{Ma2} and Almaraz \cite{Al2} for \eqref{eq_Yamabe} on manifolds with umbilic boundary,
    we expect that one can lower the threshold dimension 8 in the aforementioned compactness theorem of Ghimenti and Micheletti \cite{GM} to 6.
\item As a matter of fact, the boundary Yamabe problem can be seen as the special case of the fractional Yamabe problem
    where the symbol of the differential operator is the same as that of the half-Laplacian.
    In \cite{KMW}, we proved that the solution set of the fractional Yamabe problem is $C^2$-compact on conformal infinities of asymptotically hyperbolic manifolds,
    under the assumptions that the dimension is sufficiently high and the second-fundamental form never vanishes.
    In view of our existence result \cite{KMW2}, we expect that the compactness result holds for conformal infinities of dimension $\ge 4$ as far as the same geometric condition is maintained.
\item To examine stability issue under small perturbation of \eqref{eq_Yamabe},
    Ghimenti et al. \cite{GMP, GMP2} constructed blowing-up solutions when the linear perturbation of the mean curvature on the boundary is strictly positive everywhere;
    see also Deng et al. \cite{DKP} where analogous results were derived in the setting of the fractional Yamabe problem.
    In building suitable approximation solutions, they had to analyze an associated linearized equation with \eqref{eq_Yamabe} which is essentially the same as ours.
    Due to this reason, their results require some dimensional assumptions.
    Our method can allow one to treat lower-dimensional cases.
\end{enumerate}
\end{rmk}

\begin{rmk}\label{rmk_mar}
The proof of the main theorem shows that remarkable phenomena happen on 5-manifolds $(M, g)$ with boundary.
\begin{enumerate}
\item In Lemma \ref{lemma_pmt} and the first paragraph of Section \ref{sec_cpt},
    we construct an asymptotically flat manifold $(M \setminus \{y_0\}, G^{4 \over N-2}g)$, which we call the conformal blow-up of $(M, g)$.
    Corollary \ref{cor_mass} reveals that its mass is involved with not only the Green's function (defined in \eqref{eq_Green}) but also the trace-free second fundamental form on the boundary $\pa M$.
    Therefore, the mass carries global and local information simultaneously.
    This is in striking contrast with manifolds without boundary
    in that the mass of their conformal blow-ups depend only on the Green's function, namely, global information.
\item In Subsection \ref{subsec_nonneg}, we will see that the sign of the local information of mass is encoded in the $\ep^3 |\log \ep|$-order of the expansion
    of a local Pohozaev identity \eqref{eq_poho_2} with respect to a small scaling parameter $\ep > 0$.
    This is totally different from the classical Yamabe problem.
    In the classical one, the order involving the logarithm carries meaningful geometric information only if the manifold is even-dimensional.
\end{enumerate}
\end{rmk}

In \cite{Al3}, Almaraz constructed manifolds with umbilic boundary of dimension $N \ge 25$ on which the solution set of \eqref{eq_Yamabe} is $L^{\infty}$-unbounded (in particular, $C^2$-noncompact).
In view of the full compactness result of Khuri et al. \cite{KMS} and the non-compactness results of Brendle \cite{Br} and Brendle and Marques \cite{BM} for the classical Yamabe problem,
a natural expectation is that the solution set of \eqref{eq_Yamabe} is $C^2$-compact for all manifolds with boundary of dimension $N \le 24$ under the validity of the positive mass theorem.
However, although Schoen's argument in \cite{Sc} works in principle and we develop several efficient methods for the boundary Yamabe problem in this paper, fully achieving this seems still a difficult task.

To establish the $C^2$-compactness result for general manifolds of high dimension, we must prove that
the trace-less second fundamental form and the Weyl tensor vanish up to some high order at each blow-up point.
This requires a very accurate pointwise estimate of blowing-up solutions, which can be achieved only if one has a good understanding of linearized equations.
In the analysis on the classical Yamabe problem, Khuri et al. \cite{KMS} observed that
solutions of their linearized problems can be written explicitly in the form of rational functions.
Unfortunately, the boundary Yamabe problem seems not to have a similar property.

On the other hand, we may also need a quite precise control of the Green's function $G$ of the conformal Laplacian with Neumann boundary condition; see \eqref{eq_Green} of its definition.
In our analysis, we only need a rough control of $G$ (described in Lemma \ref{lemma_pmt}) as in the proof of the compactness theorem for 3-dimensional manifolds \cite{AdQW}.

\medskip
The rest of the paper is organized as follows:
\begin{itemize}
\item[-] In Section \ref{sec_pre}, we recall some analytic and geometric tools which we need throughout the proof of Theorem \ref{thm_main}.
    These include the expansion of the metric in Fermi coordinates, definition of the bubbles,
    a local Pohozaev's identity and the positive mass theorem on asymptotically flat manifolds with boundary.
\item[-] In Section \ref{sec_blow}, we characterize blow-up points of solutions to \eqref{eq_Yamabe_2}
    and provide basic qualitative properties of solutions near blow-up points.
\item[-] In Section \ref{sec_lin}, we study a linearized equation associated with \eqref{eq_Yamabe_2} arising from the first-order expansion of the metric.
    In order to treat low-dimensional manifolds, we need to understand its solution more precisely than higher-dimensional cases.
    For this aim, we decompose the solution into two pieces and analyze them quantitatively.
    This is one of the key parts of the proof. We also perform a refined blow-up analysis.
\item[-] In Section \ref{sec_van}, we carry out the proof of the vanishing theorem of the trace-free second fundamental form at any isolated simple blow-up point.
    For 5-manifolds, we also establish that a sum of the second-order derivatives of the trace-free second fundamental form is non-negative at each isolated simple blow-up point.
    These results are based on the quantitative analysis of the linearized equation conducted in the previous section.
\item[-] In Section \ref{sec_lsr}, employing the vanishing theorem, we prove a local Pohozaev sign condition that guarantees that every blow-up point is isolated simple.
\item[-] In Section \ref{sec_cpt}, by applying the positive mass theorem, we conclude that
    the solution set of \eqref{eq_Yamabe_2} is $C^2$-compact for every 4- and 5-manifold unless it is conformally equivalent to the unit ball.
    For 6-manifolds, we also show that the $C^2$-compactness of the solution set holds provided that the trace-free second fundamental form on the boundary never vanishes.
\item[-] In Appendix \ref{sec_app_PU}, we provide technical arguments regarding the two pieces of the solutions to the linearized equation to \eqref{eq_Yamabe_2}.
\end{itemize}
To elucidate our method, we will omit most of the proofs of intermediate results which closely follow the corresponding ones in similar settings, leaving appropriate references instead.

\bigskip \noindent \textbf{Notations.}

\medskip \noindent - Let $n = N-1$. Moreover, for any $x \in \R^N_+ = \{(x_1, \cdots, x_n, x_N) \in \R^N: x_N > 0\}$, we denote $\bx = (x_1, \cdots, x_n) \in \R^n$.
We often identify $\bx \in \R^n$ and $(\bx,0) \in \pa \R^N_+$.

\medskip \noindent - We will sometimes use $\pa_a = \frac{\pa}{\pa x_a}$, $\pa_{ab} = \frac{\pa^2}{\pa x_a \pa x_b}$, etc.

\medskip \noindent - Given $x \in \R^N_+$, $\bx \in \R^n$ and $r > 0$, let $B^N_+(x,r)$ be the $N$-dimensional upper half-ball centered at $x$ of radius $r$,
and $B^n(\bx,r)$ the $n$-dimensional ball centered at $\bx$ of radius $r$.
We often identify $B^n(\bx,r)$ and $\pa B^N_+((\bx,0),r) \cap \pa \R^N_+$.
Set $\pa_I B^N_+((\bx,0),r) = \pa B^N_+((\bx,0),r) \cap \R^N_+$.

\medskip \noindent - $S$ represents a surface measure. Its subscript $x$ or $\bx$ denotes the dependent variables.

\medskip \noindent -  $D^{1,2}(\R^N_+)$ is the homogeneous Sobolev space in $\R^N_+$ defined as
\[D^{1,2}(\R^N_+) = \left\{U \in L^{2N \over N-2}(\R^N_+): \nabla U \in L^2(\R^N_+)\right\}.\]

\medskip \noindent - $|\S^{n-1}|$ is the surface area of the unit $(n-1)$-sphere $\S^{n-1}$.

\medskip \noindent - The metric $h$ on the boundary $\pa M$ of the Riemannian manifold $(M,g)$ is the restriction of the metric $g$ to $\pa M$.

\medskip \noindent - For any $y \in \pa M$ and $r > 0$ small, $B_g(y,r)$ and $B_h(y,r)$ stand for the geodesic half-ball on $(M,g)$ and the geodesic ball on $(\pa M, h)$, respectively.
Also, $d_g$ is the distance function on $(M,g)$.

\medskip \noindent - The Einstein summation convention for repeated indices is adopted throughout the paper.
Unless otherwise stated, the indices $i$, $j$, $k$, $l$, $m$ and $s$ always range over values from 1 to $n$, while $a$, $b$, $c$ and $d$ take values from 1 to $N$.
Also, $\delta_{ab}$ is the Kronecker delta.

\medskip \noindent - We denote by $R_{abcd}[g]$ the full Riemannian curvature tensor on $(M,g)$,
by $R_{ab}[g]$ the Ricci curvature tensor on $M$, and by $R[g]$ the scalar curvature on $M$.
The quantities $R_{ijkl}[h]$, $R_{ij}[h]$ and $R[h]$ are the corresponding curvatures defined on the boundary $(\pa M, h)$.

\medskip \noindent - We write by $\tni[g]$ the second fundamental form of $\pa M$,
by $H[g] = \frac{1}{n} h^{ij}\tni_{ij}[g]$ the mean curvature on $\pa M$,
and by $\pi[g] = \tni[g] - Hg$ the trace-free second fundamental form of $\pa M$.
Furthermore, $\|\pi[g]\|^2 = h^{ik}h^{jl}\pi_{ij}[g]\pi_{kl}[g]$ stands for the square of its norm.

\medskip \noindent - For an $r$-tensor $T$, we write
\[\text{Sym}_{i_1 \cdots i_r} T_{i_1 \cdots i_r} = \frac{1}{r!} \sum_{\sigma \in S_r} T_{i_{\sigma(1)} \cdots i_{\sigma(r)}}\]
where $S_r$ is the symmetric group over a set of $r$ symbols.

\medskip \noindent - For a multi-index $\alpha = (\alpha_1, \cdots, \alpha_n) \in \R^n$,
\begin{equation}\label{eq_mi}
|\alpha| = \sum_{i=1}^n \alpha_i,\ \alpha! = \prod_{i=1}^n \alpha_i!
\quad \text{and} \quad
\frac{\pa}{\pa x_{\alpha}} = \frac{\pa^{\alpha_1}}{\pa x_1^{\alpha_1}} \cdots \frac{\pa^{\alpha_n}}{\pa x_n^{\alpha_n}}.
\end{equation}
$\beta$, $\beta'$ and $\beta''$ also denote multi-indices.

\medskip \noindent - The letter $C$ denotes a generic positive constant that may vary from line to line.

\section{Preliminaries}\label{sec_pre}
\subsection{Metric expansion and conformal coordinates}
Fix a point $y_* \in \pa M$.
For any $y \in \pa M$ near $y_*$, let $\bx = (x_1, \cdots, x_n) \in \R^n$ be normal coordinates on $\pa M$ (centered at $y_*$) of $y$.
Denote by $\nu(y)$ the inward normal vector to $\pa M$ at $y$.
We say that $x = (\bx, x_N) \in \R_+^N$ is Fermi coordinates on $M$ (centered at $y_*$) of the point $\exp_{y}(x_N \nu(x)) \in M$.

In Lemma 2.2 of Marques \cite{Ma2}, the following expansion of the metric $g$ near $y_*$ was given.
\begin{lemma}\label{lemma_metric}
In Fermi coordinates centered at $y_* \in M$, it holds that
\[g_{ij}(x) = \delta_{ij} + A_{ij}(x) + O(|x|^4),\]
$g_{iN}(x) = 0$ and $g_{NN}(x) = 1$, where
\begin{align*}
A_{ij}(x) &= - 2\tni_{ij}[g]x_N - \frac{1}{3} R_{ikjl}[h] x_kx_l - 2\tni_{ij,k}[g] x_kx_N + (-R_{iNjN}[g] + \tni_{is}[g]\tni_{sj}[g]) x_N^2 \\
&\ - \frac{1}{6} R_{ikjl,m}[h] x_kx_lx_m
+ \(- \tni_{ij,kl}[g] + \frac{2}{3} \textnormal{Sym}_{ij}(R_{iksl}[h] \tni_{sj}[g]) \) x_kx_lx_N \\
&\ + \(- R_{iNjN,k}[g] + 2 \textnormal{Sym}_{ij}(\tni_{is,k}[g]\tni_{sj}[g]) \) x_kx_N^2 \\
&\ + \frac{1}{6} \(- 2R_{iNjN,N}[g] + 8 \textnormal{Sym}_{ij}(\tni_{is}[g]R_{jNsN}[g]) \) x_N^3.
\end{align*}
Every tensor in the expansion is evaluated at $y_*$ and commas denote covariant differentiation.
\end{lemma}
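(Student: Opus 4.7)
The plan is to derive the expansion by combining the standard Taylor expansion of the induced metric in boundary normal coordinates with an expansion of the family $x_N \mapsto g_{ij}(\bx, x_N)$ governed by the Riccati equation for the Weingarten operator along inward normal geodesics. First, the identities $g_{NN}(x) \equiv 1$ and $g_{iN}(x) \equiv 0$ are immediate from the Fermi construction: each curve $x_N \mapsto \exp_{(\bx,0)}(x_N\nu)$ is a unit-speed geodesic, giving $g_{NN}=1$, while the Gauss lemma for the normal exponential map forces $\pa_N$ to be orthogonal to the coordinate vectors $\pa_i$ on every equidistant hypersurface $\{x_N=\text{const}\}$, giving $g_{iN}=0$.

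For the tangential block $g_{ij}$, I would Taylor-expand about $y_*$ and split the coefficients into pure-boundary derivatives (those with $\alpha_N = 0$) and those involving at least one $\pa_N$. On the boundary, $g_{ij}(\bx, 0) = h_{ij}(\bx)$, and since $\bx$ are normal coordinates on $(\pa M, h)$ at $y_*$ the classical Riemannian normal-coordinate expansion yields
\[h_{ij}(\bx) = \delta_{ij} - \tfrac{1}{3} R_{ikjl}[h] x_k x_l - \tfrac{1}{6} R_{ikjl,m}[h] x_k x_l x_m + O(|\bx|^4),\]
which accounts precisely for the $x_N$-independent terms in $A_{ij}$. For derivatives in the normal direction, I would use the fundamental identity $\pa_N g_{ij}|_{x_N=0} = -2\tni_{ij}[g]$ (the definition of the second fundamental form) together with the Riccati equation for the Weingarten map $W$ along normal geodesics, $\nabla_{\pa_N} W + W^2 + R(\cdot,\pa_N)\pa_N = 0$, to read off
\[\tfrac{1}{2}\pa_N^2 g_{ij}|_{x_N=0} = -R_{iNjN}[g] + \tni_{is}[g]\tni_{sj}[g],\]
and a further differentiation of the Riccati equation along $\pa_N$ delivers the $x_N^3$-coefficient $\tfrac{1}{6}(-2R_{iNjN,N}[g] + 8\,\text{Sym}_{ij}(\tni_{is}[g] R_{jNsN}[g]))$ in the stated expansion.

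The remaining mixed coefficients, governing the terms $x_k x_N$, $x_k x_l x_N$, and $x_k x_N^2$, are obtained by taking tangential derivatives of the above identities at nearby boundary points and evaluating at $y_*$. The normal-coordinate choice for $\bx$ ensures that ordinary partial derivatives coincide with covariant derivatives at $y_*$, so that $\pa_k \tni_{ij}[g]$ becomes $\tni_{ij,k}[g]$, and similarly for higher-order objects; this produces the clean covariant form of all the mixed terms in $A_{ij}$. The main obstacle — beyond the bookkeeping — is the correct symmetrization in the cubic and higher-order pieces: obtaining the factor $\tfrac{2}{3}\,\text{Sym}_{ij}(R_{iksl}[h]\tni_{sj}[g])$ in the $x_k x_l x_N$ contribution and the factor $8\,\text{Sym}_{ij}(\tni_{is}[g] R_{jNsN}[g])$ in the $x_N^3$ contribution requires carefully using the symmetry $g_{ij} = g_{ji}$ and commuting a covariant derivative past the Weingarten operator in the iterated Riccati evolution. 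Once these symmetrizations are tracked and the Taylor coefficients assembled, the claimed formula follows.
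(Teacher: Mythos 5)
The paper does not prove this lemma itself---it quotes Lemma 2.2 of Marques [Ma2]---and your argument is essentially the standard derivation used there: the generalized Gauss lemma for the Fermi block structure ($g_{NN}=1$, $g_{iN}=0$), the normal-coordinate expansion of $h$ for the purely tangential terms, and the second fundamental form together with the (once- and twice-differentiated) Riccati equation for the normal and mixed terms. One small caution: your claim that partial derivatives coincide with covariant ones at $y_*$ ``and similarly for higher-order objects'' holds only at first order; for the $x_kx_lx_N$ coefficient the discrepancy $\pa_k\pa_l \tni_{ij}(0) - \tni_{ij,kl}(0)$ is itself a curvature term, and that---rather than commuting a covariant derivative past the Weingarten operator---is the actual source of the $\frac{2}{3}\,\textnormal{Sym}_{ij}(R_{iksl}[h]\tni_{sj}[g])$ correction (likewise, the jump from $4$ to $8$ in the $x_N^3$ coefficient comes from converting $\pa_N R_{iNjN}$ into the covariant derivative $R_{iNjN,N}$ via the Christoffel symbols $\Gamma^s_{Ni}=-\tni_{si}$), though you do correctly flag these coefficients as the delicate point.
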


The next lemma describes the existence of conformal coordinates. Refer to Propositions 3.1 and 3.2 of \cite{Ma2}.
\begin{lemma}\label{lemma_conf}
For given a point $y_* \in M$ and an integer $\kappa \ge 2$, there exists a metric $\tig$ on $M$ conformal to $g$ such that
\begin{equation}\label{eq_kappa}
\det \tig(x) = 1 + O(|x|^{\kappa})
\end{equation}
in $\tig$-Fermi coordinates centered at $y_*$. In particular,
\begin{equation}\label{eq_conf}
H[g] = H_{,i}[g] = R_{ij}[h] = 0 \quad \text{and} \quad R_{NN}[g] = - \|\pi[g]\|^2 \quad \text{at } y_*.
\end{equation}
Moreover, $\tig$ can be written as $\tig = \omega^{4 \over N-2}g$ for some positive smooth function $w$ on $\pa M$ such that $w(y_*) = 1$ and $\nabla w(y_*) = 0$.
\end{lemma}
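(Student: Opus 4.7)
The plan is to follow the Lee--Parker--Cao construction of conformal normal coordinates, adapted to the boundary setting by Marques in \cite{Ma2}, choosing the conformal factor iteratively so that the polynomial expansion of $\det \tig$ in $\tig$-Fermi coordinates terminates at order $|x|^\kappa$. First, I would parametrize $w$ near $y_*$ as a positive polynomial perturbation of the constant $1$ in $g$-Fermi coordinates $x = (\bx, x_N)$, of total degree $\kappa - 1$, with coefficients to be determined and subject to the normalizations $w(y_*) = 1$ and $\nabla w(y_*) = 0$; the polynomial is then extended smoothly and positively to all of $M$. Under $\tig = w^{4/(N-2)} g$ the determinant transforms as
\[\det \tig_{ab}(x) = w(x)^{4N/(N-2)} \det g_{ab}(x),\]
so that the target condition becomes
\[\frac{4N}{N-2} \log w(x) + \log \det g_{ab}(x) = O(|x|^\kappa),\]
modulo the discrepancy between $g$-Fermi and $\tig$-Fermi coordinates. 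The normalizations of $w$ ensure that the two Fermi coordinate maps agree up to order $2$ at $y_*$, so this discrepancy contributes only at strictly lower order at each step of the iteration.

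Next, I would expand $\log \det g$ in $g$-Fermi coordinates via Lemma~\ref{lemma_metric}, producing a polynomial of degree $\kappa - 1$ whose coefficients are explicit contractions of $\tni[g]$, $R[h]$ and $R[g]$ together with their covariant derivatives at $y_*$. Matching coefficients of $x^{\alpha}$ for $|\alpha| = 1, \ldots, \kappa - 1$ yields a triangular hierarchy of linear equations for the coefficients of $w$; at each order $m$ the unknowns enter through the scalar trace of the order-$m$ coefficient block, and since the trace map from polynomials of degree $m$ to scalars is surjective the equation is always solvable. Iterating for $m = 1, \ldots, \kappa - 1$ produces the required $w$, essentially reproducing Propositions~3.1 and 3.2 of \cite{Ma2}.

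The geometric consequences then follow by reading off specific coefficients in the expansion of $\det \tig$ using Lemma~\ref{lemma_metric}. The $x_N$-coefficient of $\det \tig$ is $-2nH[\tig](y_*)$, so the $\kappa \ge 2$ case forces $H[\tig](y_*) = 0$. Once $\kappa \ge 3$, the $x_k x_l$-coefficient is a multiple of $R_{kl}[\tih](y_*)$, the $x_k x_N$-coefficient is a multiple of $H_{,k}[\tig](y_*)$, and the $x_N^2$-coefficient, upon carrying out
\[\det(I + A) = 1 + \operatorname{tr} A + \tfrac{1}{2}\left((\operatorname{tr} A)^2 - \operatorname{tr}(A^2)\right) + O(|A|^3)\]
and using $H[\tig](y_*) = 0$, reduces to $-R_{NN}[\tig](y_*) - \|\tni[\tig](y_*)\|^2$. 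Its vanishing, together with $\pi[\tig](y_*) = \tni[\tig](y_*)$, gives $R_{NN}[\tig] = -\|\pi[\tig]\|^2$ at $y_*$.

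The main obstacle is the interplay between the iterative construction of $w$ and the change of Fermi coordinates: the $\tig$-exponential map and inward normal vector both depend on $w$ and its normal derivative at $\pa M$, and one must verify at each order that the corrections arising from this coordinate change do not disturb the lower-order terms already matched. Handling this asymmetry between tangential and normal directions is the technical heart of Marques's adaptation of the Lee--Parker--Cao construction to manifolds with boundary.
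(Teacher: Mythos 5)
Your overall route is the correct one, and it is the one the paper itself points to: the paper gives no proof of this lemma but refers to Propositions 3.1 and 3.2 of Marques \cite{Ma2}, and your iterative, Lee--Parker-style determination of the Taylor coefficients of the conformal factor, followed by reading off low-order coefficients of $\det\tig$ via $\det(I+A)=1+\operatorname{tr}A+\frac12\bigl((\operatorname{tr}A)^2-\operatorname{tr}(A^2)\bigr)+\cdots$, is exactly that argument. Your extraction of \eqref{eq_conf} is also correct: with Lemma \ref{lemma_metric}, the $x_N$-coefficient of $\det\tig$ is $-2nH[\tig]$, the $x_kx_l$- and $x_kx_N$-coefficients are multiples of $R_{kl}[\tih]$ and $H_{,k}[\tig]$, and the $x_N^2$-coefficient reduces to $-R_{NN}[\tig]-\|\tni[\tig]\|^2$ once $H[\tig](y_*)=0$, which together with $\pi[\tig]=\tni[\tig]$ at $y_*$ gives the last identity (the conclusions are of course meant for the new metric, which the paper then renames $g$).

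The one place where your setup, as literally written, would break is imposing $\nabla w(y_*)=0$ on a function of $(\bx,x_N)$, i.e.\ killing the full first-order Taylor block of the conformal factor. In any $\tig$-Fermi coordinates one has $\det\tig = 1-2nH[\tig](y_*)x_N+O(|x|^2)$, so already the case $\kappa=2$ forces $H[\tig](y_*)=0$; by the conformal transformation law of the mean curvature (equivalently, $B_g\omega=\frac{N-2}{2}H[\tig]\,\omega^{N/(N-2)}$) this requires $\pa_N\omega(y_*)=\frac{N-2}{2}H[g](y_*)$, which is nonzero in general. Thus the normal derivative of the conformal factor at $y_*$ is not a free normalization but is precisely the coefficient that cancels the first-order term; only the \emph{tangential} gradient can be prescribed to vanish, which is how the statement (where $w$ denotes the restriction of $\omega$ to $\pa M$) must be read. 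If you impose $\pa_N\omega(y_*)=0$, the iteration fails at its very first step whenever $H[g](y_*)\neq 0$. A smaller point: the "surjectivity of the trace map" is not the relevant mechanism here. At each order $m$ the order-$m$ Taylor coefficients of $\log\omega$ enter the coefficient of $x^\alpha$ in $\log\det\tig$ linearly and in one-to-one correspondence with the multi-indices $\alpha$, modulo already-determined lower-order data and the coordinate-change corrections you rightly identify as the technical heart; one simply matches scalar coefficients multi-index by multi-index.
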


\subsection{Bubbles in the Euclidean half-space}\label{subsec_bubble}
Assume that $N \ge 3$.
For $\lambda > 0$ and $\xi \in \R^n$, let a bubble $W_{\lambda,\xi}$ be a function defined as
\begin{equation}\label{eq_W_lx}
W_{\lambda,\xi}(x) = \frac{\lambda^{N-2 \over 2}}{(|\bx-\xi|^2 + (x_N+\lambda)^2)^{N-2 \over 2}} \quad \text{for } x \in \R^N_+,
\end{equation}
which is an extremal function of the Sobolev trace inequality $D^{1,2}(\R^N_+) \hookrightarrow L^{2(N-1) \over N-2}(\R^n)$; see Escobar \cite{Es5}.
According to Li and Zhu \cite{LZ}, any solution to the boundary Yamabe problem on $\R^N_+$
\begin{equation}\label{eq_W_eq}
\begin{cases}
- \Delta U = 0 &\text{in } \R^N_+,\\
U > 0 &\text{in } \R^N_+,\\
- \dfrac{\pa U}{\pa x_N} = (N-2)U^{N \over N-2} &\text{on } \R^n
\end{cases}
\end{equation}
must be a bubble.
Note that a sequence $\{W_{{1 \over n},0}\}_{n \in \N}$ of bubbles exhibits a blow-up phenomenon as $n \to \infty$, and in particular, the family of all bubbles is not $L^{\infty}(\R^N_+)$-bounded.
Furthermore, D\'avila et al. \cite{DDS} proved that the solution space of the linear problem
\[\begin{cases}
- \Delta \Phi = 0 &\text{in } \R^N_+,\\
- \dfrac{\pa \Phi}{\pa x_N} = N w_{\lambda,\xi}^{2 \over N-2} \Phi &\text{on } \R^n,\\
\|\Phi(\cdot, 0)\|_{L^{\infty}(\R^n)} < \infty,
\end{cases}\]
where $w_{\lambda,\xi}(\bx) = W_{\lambda,\xi}(\bx,0)$ on $\R^n$, is spanned by
\[Z_{\lambda,\xi}^1 = {\pa W_{\lambda,\xi} \over \pa \xi_1},\ \cdots,\ Z_{\lambda,\xi}^n = {\pa W_{\lambda,\xi} \over \pa \xi_n}
\quad \text{and} \quad
Z_{\lambda,\xi}^0 = - {\pa W_{\lambda,\xi} \over \pa \lambda};\]
refer also to Lemma 2.1 of \cite{Al}.

\subsection{Conformally invariant equations}
Let $\delta = \frac{N}{N-2} - p \ge 0$. It turns out that it is more convenient to deal with the following form of the equation
\begin{equation}\label{eq_Yamabe_3}
\begin{cases}
L_g U = 0 &\text{on } M,\\
B_g U = (N-2) f^{-\delta} U^p &\text{on } \pa M
\end{cases}
\end{equation}
than \eqref{eq_Yamabe_2}. Indeed, by the conformal covariance property of the operators $L_g$ and $B_g$,
the metric $\tig = \omega^{4 \over N-2} g$ conformal to $g$ and the function $\wtu = \omega^{-1} U > 0$ on $M$ satisfy
\begin{equation}\label{eq_Yamabe_30}
\begin{cases}
L_{\tig} \wtu = 0 &\text{on } M,\\
B_{\tig} \wtu = (N-2) \tif^{-\delta} \wtu^p &\text{on } \pa M
\end{cases}
\end{equation}
where $\tif = \omega f$. Obviously, it is an equation of the same type as \eqref{eq_Yamabe_3}.

We will study a sequence $\{U_m\}_{m \in \N}$ of solutions to \eqref{eq_Yamabe_3}
with suitable choices of the exponents $p = p_m \in [1+\vep_0, \frac{N}{N-2}]$ and $\delta = \delta_m = \frac{N}{N-2} - p_m$,
the metric $g = g_m$ on $M$ and the smooth positive function $f = f_m$ on $\pa M$.
Although we postpone their specific description to Section \ref{sec_blow}, we stress that our choices will induce that
$p_m \to p_0$, $g_m \to g_0$ in $C^4(M, \R^{N \times N})$ and $f_m \to f_0 > 0$ in $C^2(\pa M)$ as $m \to \infty$, and $g_0$ is a metric on $M$.

\subsection{Pohozaev's identity}
In the analysis of blowing-up solutions, we shall rely on the following version of local Pohozaev's identity. For its derivation, see Proposition 3.1 of \cite{Al}.
\begin{lemma}
Assume that $N \ge 3$. Let $U \in H^1(B^N_+(0,\rho_1))$ be a solution to
\[\begin{cases}
- \Delta U = Q &\text{in } B^N_+(0,\rho_1),\\
- \dfrac{\pa U}{\pa x_N} + \dfrac{N-2}{2} HU = f U^p &\text{on } B^n(0,\rho_1)
\end{cases}\]
where $p \in [1, \frac{N}{N-2}]$, $Q \in L^{\infty}(B^N_+(0,\rho_1))$ and $H,\, f \in C^1(B^n(0,\rho_1))$. For any $\rho \in (0, \rho_1)$, we define
\begin{equation}\label{eq_poho_0}
\mcp'(U, \rho) = \int_{\pa_I B^N_+(0,\rho)} \left[-\({N-2 \over 2}\) U {\pa U \over \pa \nu} - {\rho \over 2} |\nabla U|^2 + \rho \left| {\pa U \over \pa \nu} \right|^2 \right] dS_x
\end{equation}
and
\begin{equation}\label{eq_poho_1}
\mcp(U, \rho) = \mcp'(U, \rho) + {\rho \over p+1} \int_{\pa B^n(0,\rho)} f U^{p+1} dS_{\bx}
\end{equation}
where $\nu$ is the inward unit normal vector with respect to $\pa_I B^N_+(0,\rho)$. Then we have
\begin{equation}\label{eq_poho_2}
\begin{aligned}
\mcp(U, \rho) &= - \int_{B^N_+(0,\rho)} Q \left[x_a \pa_a U + \({N-2 \over 2}\) U \right] dx \\
&\ + \frac{N-2}{2} \int_{B^n(0,\rho)} H \left[x_i \pa_i U + \({N-2 \over 2}\) U \right] U d\bx \\
&\ - {1 \over p+1} \int_{B^n(0,\rho)} x_i \pa_if U^{p+1} d\bx + \({N-1 \over p+1} - {N-2 \over 2}\) \int_{B^n(0,\rho)} f U^{p+1} d\bx
\end{aligned}
\end{equation}
for all $\rho \in (0, \rho_1)$.
\end{lemma}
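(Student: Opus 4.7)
The plan is to apply the classical Pohozaev multiplier technique, adapted to the half-ball with a nonlinear Neumann-type boundary condition. First I would multiply the interior equation $-\Delta U = Q$ by
\[
V := x^a \pa_a U + \frac{N-2}{2} U
\]
and integrate over $B^N_+(0, \rho)$. The coefficient $\frac{N-2}{2}$ is the conformal weight for which all bulk $|\nabla U|^2$ contributions ultimately cancel.

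The first key step is to rewrite $\Delta U \cdot V$ in divergence form. A direct differentiation gives
\[
\Delta U \cdot V = \pa_b \mathcal{F}^b,
\qquad
\mathcal{F}^b := (\pa_b U)(x^a \pa_a U) - \frac{x^b}{2} |\nabla U|^2 + \frac{N-2}{2} U \pa_b U,
\]
because the interior $|\nabla U|^2$ terms combine to $(1 - \frac{N}{2} + \frac{N-2}{2}) |\nabla U|^2 = 0$. Invoking the divergence theorem on $B^N_+(0, \rho)$ then converts $-\int Q V\, dx$ into a sum of boundary integrals over the hemisphere $\pa_I B^N_+(0,\rho)$ and the flat face $B^n(0,\rho)$.

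Next I would evaluate the two boundary pieces separately. On $\pa_I B^N_+(0,\rho)$, using $|x| = \rho$ and the relation $\tilde{\nu} = -\nu = x/\rho$ between the half-ball's outward normal and the lemma's inward normal $\nu$, the boundary integrand of $\tilde{\nu}^b \mathcal{F}^b$ reduces exactly to the integrand of $\mcp'(U, \rho)$. On the flat face, with outward normal $-e_N$ and $x_N = 0$, the $\frac{x^b}{2} |\nabla U|^2$ term drops out and what survives is $-\int_{B^n} (\pa_N U) V|_{x_N=0}\, d\bx$. At this point I would substitute the boundary condition $\pa_N U = \frac{N-2}{2} H U - f U^p$; the $H$-term directly matches the $H$-integral in \eqref{eq_poho_2}, while the $f U^p V$-term still requires further work.

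Finally, the $f U^p \cdot x^i \pa_i U$ contribution must be integrated by parts in $\R^n$. Writing $U^p \pa_i U = \frac{1}{p+1} \pa_i(U^{p+1})$ and using $\pa_i(f x^i) = n f + x^i \pa_i f$ together with $n = N - 1$ produces the combinatorial coefficient $\frac{N-1}{p+1} - \frac{N-2}{2}$ in front of $\int f U^{p+1}$, the $x^i \pa_i f$-term, and a surface contribution $\frac{\rho}{p+1} \int_{\pa B^n(0,\rho)} f U^{p+1}\, dS_{\bx}$. Moving this last piece to the other side promotes $\mcp'$ to $\mcp$ via \eqref{eq_poho_1}, completing the derivation. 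I expect the main obstacle to be purely one of bookkeeping: three distinct outward normals enter the computation (the outward normal of the half-ball on $\pa_I$, which is opposite to the lemma's $\nu$; the flat-face normal $-e_N$; and the Euclidean outward normal $\bx/\rho$ on $\pa B^n(0,\rho)$), and one must consistently track sign conventions to recover every coefficient in \eqref{eq_poho_2}. No genuinely difficult analytic estimate appears.
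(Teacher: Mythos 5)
Your approach is correct and is precisely the derivation behind the paper's own ``proof,'' which consists of a citation to Proposition 3.1 of Almaraz's compactness paper: multiply $-\Delta U=Q$ by the conformal multiplier $V=x_a\pa_aU+\frac{N-2}{2}U$, use the divergence identity $\Delta U\cdot V=\pa_b\mathcal{F}^b$ (your $\mathcal{F}^b$ is the right one, and the bulk $|\nabla U|^2$ terms do cancel for this weight), identify the hemispherical flux with the integrand of $\mcp'$ via $\nu=-x/\rho$, substitute the boundary condition on the flat face, and integrate the $fU^p\,x_i\pa_iU$ term by parts tangentially. All steps are standard and go through.

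One concrete caveat on the bookkeeping you yourself identify as the main risk: carrying out your last step literally, one finds $-\int_{B^n}fU^p\,x_i\pa_iU\,d\bx=-\frac{\rho}{p+1}\int_{\pa B^n(0,\rho)}fU^{p+1}dS_{\bx}+\frac{N-1}{p+1}\int_{B^n}fU^{p+1}d\bx+\frac{1}{p+1}\int_{B^n}x_i\pa_if\,U^{p+1}d\bx$, so the $x_i\pa_if$ term enters the final identity with a \emph{plus} sign, $+\frac{1}{p+1}\int_{B^n(0,\rho)}x_i\pa_if\,U^{p+1}d\bx$, not the minus sign printed in \eqref{eq_poho_2}. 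This is not an artifact of convention: taking $N=3$, $p=1$, $Q=H=0$, $U=x_1^2x_3-\frac{1}{3}x_3^3+1$ (harmonic, with $f=-x_1^2$ on $x_3=0$) gives $\mcp(U,\rho)=-\frac{3}{8}\pi\rho^4$ by direct computation, which agrees with the plus-sign version and not with the statement as printed. So your derivation will not ``recover every coefficient'' as stated; you should record the identity with the corrected sign (or flag the typo). The discrepancy is harmless for the rest of the paper, since this term is only ever used as an error term whose sign is irrelevant: it enters $\mcp_{2m}$ multiplied by $\delta_m$ and is absorbed by taking $\rho$ small.
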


\subsection{Positive mass theorem}
In \cite{ABd}, Almaraz et al. introduced the mass of $N$-dimensional asymptotically flat manifolds with non-compact boundary
and proved the associated positive mass theorem for arbitrary manifolds of dimension $3 \le N \le 7$ and spin manifolds of dimension $N \ge 3$.
In \cite{AdQW}, Almaraz et al. used the positive mass theorem to describe the asymptotic behavior of the Green's function of the conformal Laplacian
on a smooth compact Riemannian manifold $(M, g)$ with boundary in terms of the mass.

The version of the positive mass theorem which we will apply in this paper is summarized in the following lemma.
This is a combination of Theorem 1.3 of \cite{ABd} and Proposition 3.5 of \cite{AdQW}.
\begin{lemma}\label{lemma_pmt}
For $3 \le N \le 7$, let $(M,g)$ be an $N$-dimensional smooth compact Riemannian manifold with boundary, and $y_0$ be an arbitrarily fixed point on $M$.
Suppose that we have the metric expansion
\begin{equation}\label{eq_g_ab}
g_{ab}(x) = \delta_{ab} + A_{ab}(x) + O(|x|^{2d+2}), \quad d = \left\lfloor \frac{N-2}{2} \right\rfloor
\end{equation}
with
\begin{equation}\label{eq_A_ab}
A_{iN}(x) = A_{NN}(x) = 0,\quad A_{ij}(x) = O(|x|^{d+1}),\quad \textnormal{trace}(A(x)) = O(|x|^{2d+2})
\end{equation}
in Fermi coordinates centered at $y_0$. Assume also that $G$ is a smooth positive function on $M \setminus \{y_0\}$ such that
\begin{equation}\label{eq_Green_ex}
G(x) = |x|^{2-N} + \phi(x)
\end{equation}
in the same coordinates, where $\phi$ is a smooth function on $M \setminus \{y_0\}$ satisfying
\begin{equation}\label{eq_phi}
\phi(x) = O(|x|^{d+3-N}|\log|x||) \quad \text{as } |x| \to 0.
\end{equation}
Then the manifold $(M \setminus \{y_0\}, G^{4 \over N-2}g)$ is asymptotically flat with the mass
\begin{equation}\label{eq_pmt}
m_0 = \lim_{\rho \to 0} \mci(y_0,\rho) \ge 0
\end{equation}
where
\begin{equation}\label{eq_mci}
\begin{aligned}
\mci(y_0,\rho) &= \frac{4(N-1)}{N-2} \int_{\pa_I B^N_+(0,\rho)} \(|x|^{2-N} \pa_a G(x) - \pa_a|x|^{2-N} G(x)\) \frac{x_a}{|x|}\, dS_x \\
&\ - \int_{\pa_I B^N_+(0,\rho)} \(\rho^{3-2N} x_a\pa_b A_{ab}(x) - 2N \rho^{1-2N} x_ax_bA_{ab}(x)\) dS_x.
\end{aligned}
\end{equation}
Furthermore, if $M$ is not conformally equivalent to the standard unit ball in $\R^N$,
\begin{equation}\label{eq_pmt_c}
R\big[G^{4 \over N-2}g\big] \ge 0 \quad \text{on } M \setminus \{y_0\} \quad \text{and} \quad
H\big[G^{4 \over N-2}g\big] \ge 0 \quad \text{on } \pa M \setminus \{y_0\},
\end{equation}
then $m_0 > 0$.
\end{lemma}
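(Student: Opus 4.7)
The plan is to realize $(M \setminus \{y_0\}, \tilde g := G^{4/(N-2)} g)$ as an asymptotically flat manifold with non-compact boundary in the sense of Almaraz--Barbosa--de Lima \cite{ABd}, to compute its ADM-type mass in terms of the local data near $y_0$, and then to appeal to their positive mass theorem. Concretely, I would invoke Theorem~1.3 of \cite{ABd} for non-negativity and rigidity, and reproduce the calculation of Proposition~3.5 of \cite{AdQW} to match the asymptotic mass integral with the explicit local formula $\mci(y_0,\rho)$ in \eqref{eq_mci}.

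First, I would perform the Kelvin-type change of coordinates $z = x/|x|^2$, which sends the punctured half-ball near $y_0$ to the exterior of a half-ball in $\R^N_+$. Because \eqref{eq_A_ab} imposes $A_{iN}(x) = A_{NN}(x) = 0$, the Fermi boundary $\{x_N = 0\}$ is carried to $\{z_N = 0\}$ and $\tilde g$ carries no mixed tangential/normal components along the boundary. Substituting $G(x) = |x|^{2-N} + \phi(x)$ with $\phi = O(|x|^{d+3-N} |\log|x||)$, and using $\textnormal{trace}(A) = O(|x|^{2d+2})$ together with $A_{ij} = O(|x|^{d+1})$, a direct expansion yields
\[
\tilde g_{ab}(z) = \delta_{ab} + O\bigl(|z|^{-(d+1)} \log|z|\bigr)
\]
in $z$-coordinates, with analogous decay on derivatives. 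Since $d+1 > (N-2)/2$ throughout $3 \le N \le 7$, these bounds match the asymptotic flatness framework of \cite{ABd}.

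Second, I would derive the mass formula \eqref{eq_mci}. The mass at infinity is an ADM-type surface integral of $(\pa_b \tilde g_{ab} - \pa_a \tilde g_{bb}) \nu_a$ over large coordinate hemispheres $|z| = r$. Pulling the integral back to the $x$-side through $z = x/|x|^2$, substituting $G = |x|^{2-N} + \phi$, and using $\pa_a |x|^{2-N} = (2-N) |x|^{-N} x_a$ together with the harmonicity of $|x|^{2-N}$ away from the origin, the mass splits naturally into two pieces: a Green-function piece collecting derivatives of $\phi$, which produces the first line of \eqref{eq_mci} with prefactor $\frac{4(N-1)}{N-2}$ dictated by the conformal Laplacian normalization, and a metric-expansion piece involving $A_{ab}$, which produces the second line. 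This is exactly the computation of Proposition~3.5 of \cite{AdQW}.

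Finally, since $3 \le N \le 7$, Theorem~1.3 of \cite{ABd} applies to $(M \setminus \{y_0\}, \tilde g)$. The hypothesis \eqref{eq_pmt_c} supplies $R[\tilde g] \ge 0$ in the bulk and $H[\tilde g] \ge 0$ on the boundary, so the PMT delivers $m_0 \ge 0$. The rigidity part forces equality only when $(M \setminus \{y_0\}, \tilde g)$ is isometric to $\R^N_+$, which via the conformal factor $G^{4/(N-2)}$ is equivalent to $(M, g)$ being conformally equivalent to the unit ball $\B^N$; excluding this case upgrades the inequality to $m_0 > 0$. The main technical obstacle will be the careful bookkeeping in the second step: one must verify that the logarithmic factor in $\phi$ and the higher-order metric terms contribute only convergent quantities to the $\rho \to 0$ limit in $\mci(y_0, \rho)$, and in particular the trace condition $\textnormal{trace}(A) = O(|x|^{2d+2})$ is precisely what suppresses the otherwise divergent contributions from $A_{ab}$ and ensures that the two pieces of \eqref{eq_mci} separately reach finite limits.
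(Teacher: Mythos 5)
Your proposal is correct and follows essentially the same route as the paper, which proves this lemma precisely by combining the positive mass theorem of Almaraz--Barbosa--de Lima (Theorem 1.3 of \cite{ABd}) with the mass computation of Proposition 3.5 of \cite{AdQW}; your verification that $d+1 > (N-2)/2$ and that the trace condition controls the $A_{ab}$-contributions matches the required asymptotic flatness bookkeeping. Nothing further is needed.
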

\noindent The integral expression $\mci$ for the mass was introduced by Brendle and Chen \cite{BC}.
In Lemma \ref{lemma_exp_2}, we will examine the relationship between the integral $\mci$ and the function $\mcp'$ defined in \eqref{eq_poho_0},
after choosing the function $G$ concretely.

\section{Basic properties of blow-up}\label{sec_blow}
\subsection{Characterization of blow-up points}\label{subsec_blow}
We recall the notion of blow-up, isolated blow-up and isolated simple blow-up.
By virtue of Proposition \ref{prop_blow_a}, it is enough to consider when the blow-up occurs near a point on the boundary.
The version we will use here is identical to those in \cite{Al, AdQW}.
\begin{defn}\label{def_blow}
Pick a small number $\rho_1 > 0$ such that $g_m$-Fermi coordinates centered at $y \in \pa M$
is well-defined in the closed geodesic half-ball $\overline{B^N_+(y,\rho_1)} \subset M$ for every $m \in \N$ and $y \in \pa M$.

\medskip \noindent (1) $y_0 \in \pa M$ is called a blow-up point of a sequence $\{U_m\}_{m \in \N}$ in $H^1(M)$ if there exists a sequence of points $\{y_m\}_{m \in \N} \subset \pa M$
such that $y_m$ is a local maximum of $U_m|_{\pa M}$ satisfying that $U_m(y_m) \to \infty$ and $y_m \to y_0$ as $m \to \infty$.
For the sake of brevity, we will often say that $y_m \to y_0$ is a blow-up point of $\{U_m\}_{m \in \N}$.

\medskip \noindent (2) $y_0 \in \pa M$ is an isolated blow-up point of $\{U_m\}_{m \in \N}$ if $y_0$ is a blow-up point such that
\[U_m(y) \le C d_{g_m}(y, y_m)^{-{1 \over p_m-1}} \quad \text{for any } y \in M \setminus \{y_m\},\ d_{g_m}(y, y_m) < \rho_2\]
for some $C > 0$ and $\rho_2 \in (0, \rho_1]$.

\medskip \noindent (3) Let $\ou_m$ be a weighted spherical average of $U_m$, i.e.,
\begin{equation}\label{eq_ou}
\ou_m(\rho) = \rho^{1 \over p_m-1} \({\int_{\pa_I B^N_+(y_m, \rho)} U_m\, dS_{g_m} \over \int_{\pa_I B^N_+(y_m, \rho)} dS_{g_m}}\), \quad \rho \in (0, \rho_1).
\end{equation}
We say that an isolated blow-up point $y_0$ of $\{U_m\}_{m \in \N}$ is simple if there exists a number $\rho_3 \in (0, \rho_2]$
such that $\ou_m$ possesses exactly one critical point in the interval $(0, \rho_3)$ for large $m \in \N$.
\end{defn}
Hereafter, we always assume that $U_m \in H^1(M)$ is a solution to \eqref{eq_Yamabe_3} with $p = p_m$, $g = g_m$ and $f_m = 1$ for each $m \in \N$.
For simplicity, we will just say that $\{U_m\}_{m \in \N}$ is a sequence of solutions to \eqref{eq_Yamabe_3}.
We also assume that $y_m \to y_0 \in \pa M$ is a blow-up point of $\{U_m\}_{m \in \N}$.
Set $M_m = U_m(y_m)$ and $\ep_m = M_m^{-(p_m-1)}$ for each $m \in \N$.
Obviously, $M_m \to \infty$ and $\ep_m \to 0$ as $m \to \infty$.

Choose a suitable positive smooth function $\omega_m$ on $M$ so that
the metric $\tig_m = \omega_m^{4 \over N-2} g_m$ on $M$ satisfies properties depicted in Lemma \ref{lemma_conf} where $y_*$ is replaced with $y_m$.
Then $\wtu_m = \omega_m^{-1} U_m$ is a solution to \eqref{eq_Yamabe_30} with $\tig = \tig_m$ and $\tif = \tif_m = \omega_m f_m$,
and a sequence $\{\tig_m\}_{m \in \N}$ of the metrics converges to a metric $\tig_0$ in $C^4(M, \R^{N \times N})$ as $m \to \infty$.
We shall often use $x \in \R^N_+$ to denote $\tig_m$-Fermi coordinates centered at $y_m$
so that $\wtu_m$ can be regarded as a function in $\R^N_+$ near the origin.

\subsection{Basic properties of blowing-up solutions}
Firstly, we study asymptotic behavior of a sequence $\{U_m\}_{m \in \N}$ of solutions to \eqref{eq_Yamabe_3} near blow-up points.
It can be proved as in e.g. Proposition 1.1 of \cite{HL} or Proposition 3.2 of \cite{FO}.
\begin{prop}\label{prop_blow_a}
Assume that $N \ge 3$ and $p \in [1+\vep_0, \frac{N}{N-2}]$.
Given arbitrary small $\vep_1 > 0$ and large $R > 0$, there are constants $C_0,\, C_1 > 0$ depending only on $(M^N,g), \vep_0,\, \vep_1$ and $R$ such that
if $U \in H^1(M)$ is a solution to \eqref{eq_Yamabe_2} with the property that $\max_M U \ge C_0$,
then $\frac{N}{N-2} - p < \vep_1$ and $U|_{\pa M}$ possesses local maxima $y_{01},\, \cdots y_{0\mcn} \in \pa M$ for some integer $\mcn = \mcn(U) \ge 1$, for which the following statements hold:

\medskip \noindent \textnormal{(1)} It is valid that
\[\overline{B_h(y_{0m_1}, \rho_{m_1})} \cap \overline{B_h(y_{0m_2}, \rho_{m_2})} = \emptyset \quad \text{for } 1 \le m_1 \ne m_2 \le \mcn\]
where $\rho_m = R U(y_{0m})^{-(p-1)}$.

\medskip \noindent \textnormal{(2)} For each $m = 1, \cdots, \mcn$, we have
\[\left\| U(y_{0m})^{-1} U\(U(y_{0m})^{-(p-1)} \cdot \) - W_{1,0} \right\|_{C^2(\overline{B^N_+(0,2R)})} \le \vep_1\]
in $g$-Fermi coordinates centered in $y_m$.

\medskip \noindent \textnormal{(3)} It holds that
\[U(y)\, d_h(y, \{y_{01}, \cdots, y_{0\mcn}\})^{1 \over p-1} \le C_1 \quad \text{for } y \in M.\]
\end{prop}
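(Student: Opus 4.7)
The plan is to follow the by-now-standard blow-up selection procedure originated by Schoen for the classical Yamabe problem and adapted to the boundary setting, so that the proof reduces to citing \cite{HL, FO} almost verbatim. I would carry out the argument in four steps.

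First I would establish that $\frac{N}{N-2}-p<\vep_1$ whenever $\max_M U\ge C_0$, for $C_0$ large enough. For $p$ bounded away from $\frac{N}{N-2}$ by a fixed amount the Sobolev trace embedding is subcritical, and a Moser iteration applied to \eqref{eq_Yamabe_2} using Cherrier's regularity theory \cite{Cr} yields a uniform $L^\infty$-bound on $U$ in terms of $\vep_0$ and $(M,g)$; contrapositively, if $\max_M U$ is prescribed to be large, then $p$ must approach the critical exponent.

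Second, I would extract blow-up points by the usual iterative selection. Since $L_g U=0$ in $M$ with $Q(M,\pa M)>0$, the maximum principle forces $\max_M U$ to be attained at a boundary point; let $y_{01}$ be such a maximizer and set $\rho_1=R U(y_{01})^{-(p-1)}$. If $\sup_{\pa M\setminus B_h(y_{01},\rho_1)}U\ge C_0$, choose $y_{02}$ to be the next boundary maximum there, and iterate. A standard volume-type argument (as in \cite[Prop.~3.2]{FO}) guarantees that this procedure terminates after $\mcn=\mcn(U)$ steps, producing pairwise disjoint half-balls $B_h(y_{0m},\rho_m)$ and establishing (1).

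Third, at each $y_{0m}$ I would perform the rescaling
\[
V_m(x)=U(y_{0m})^{-1}U\bigl(\exp^{g}_{y_{0m}}\bigl(U(y_{0m})^{-(p-1)}x\bigr)\bigr),
\]
written in $g$-Fermi coordinates. Then $V_m$ solves a rescaled version of \eqref{eq_Yamabe_2} on an expanding half-ball with metric converging to the Euclidean half-space metric in $C^2_{\mathrm{loc}}$ and with $V_m(0)=1$, $|V_m|\le 1$ on $B^N_+(0,R)$. Boundary elliptic estimates together with Cherrier's regularity give $C^2_{\mathrm{loc}}$-compactness of $\{V_m\}$, and the limit is a bounded positive solution of \eqref{eq_W_eq} attaining its maximum at the origin. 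The Li--Zhu classification recalled in Subsection \ref{subsec_bubble} then forces the limit to be $W_{1,0}$, which yields (2) once $\vep_1$ and $R$ have been fixed.

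Finally, for the pointwise bound (3) I would argue by contradiction: if it failed along a sequence of solutions, I would pick points $z_m\in M$ (near-)realizing the supremum of $U(y)d_h(y,\{y_{01},\cdots,y_{0\mcn}\})^{1/(p-1)}$ and rescale at $z_m$ with scale $U(z_m)^{-(p-1)}$. Since $U>0$ solves $L_gU=0$ inside $M$, the rescaled maximum must lie on the boundary; extracting a $C^2_{\mathrm{loc}}$ limit gives a nontrivial bounded solution of \eqref{eq_W_eq}, hence a bubble, which produces a local boundary maximum at distance $O(1)$ from the rescaled $z_m$ but at distance going to $\infty$ from every rescaled $y_{0m}$, contradicting the maximality rule of the selection. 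The main obstacle in all of this is the bookkeeping at the boundary: one has to track carefully the passage between $g$-Fermi coordinates at different base points and ensure that the rescalings respect the Neumann-type boundary condition, so that the classification of \cite{LZ} applies to the limit rather than a whole-space Liouville statement. Since all of these issues are by now routine and present already in \cite{HL, FO, Al, AdQW}, the detailed computation can be safely omitted in favor of a reference.
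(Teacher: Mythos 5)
Your steps 2--4 reproduce the standard Schoen-type selection/rescaling argument that the paper itself invokes by simply citing Proposition 1.1 of \cite{HL} and Proposition 3.2 of \cite{FO}, so for parts (1)--(3) you are on the paper's route and the references you give are the right ones. The one step that does not work as written is your first one. Moser iteration (even combined with Cherrier's regularity theory \cite{Cr}) yields an $L^\infty$ bound only in terms of some a priori integral norm of $U$, and solutions of \eqref{eq_Yamabe_2} are merely assumed to lie in $H^1(M)$ with no uniform energy bound: testing the equation against $U$ gives an identity relating the $H^1$-energy to $\int_{\pa M}U^{p+1}$, which does not close. So ``subcritical exponent $\Rightarrow$ uniform $L^\infty$ bound'' is a true statement, but it cannot be obtained by iteration alone; the mechanism in \cite{HL,FO} is exactly the blow-up you set up later. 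If $\max_M U_m\to\infty$ along a sequence with $p_m\le \frac{N}{N-2}-\vep_1$, one rescales at the maximum points, passes to a $C^2_{\mathrm{loc}}$ limit $V$ on $\R^N_+$ with $V(0)=1$, $0<V\le 1$, solving $-\Delta V=0$ in $\R^N_+$ and $-\pa_N V=cV^{p_0}$ on $\R^n$ for some strictly subcritical $p_0$, and then invokes the Liouville theorem of Hu for the subcritical half-space problem to get a contradiction. That subcritical Liouville theorem is the missing ingredient for the first assertion of the proposition; the Li--Zhu classification you quote only handles the critical exponent arising in part (2).

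Two smaller points to tighten: the assertion that the maximum principle forces $\max_M U$ onto $\pa M$ needs $R[g]\ge 0$ (so that $\Delta_g U\ge 0$), which is arranged by a preliminary conformal change using $Q(M,\pa M)>0$; and in part (2) the limit equation carries the constant $Q(M,\pa M)$ rather than $N-2$, so a fixed rescaling must be inserted before the classification pins the limit down to $W_{1,0}$ via the normalization $V(0)=\max V=1$. With these repairs your outline coincides with the argument the paper delegates to \cite{HL,FO}.
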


Secondly, we discuss behavior of a sequence of solutions $\{U_m\}_{m \in \N}$ to \eqref{eq_Yamabe_3} near isolated blow-up points.
The next lemma can be proved as in e.g. Proposition 1.4 of \cite{HL} or Lemma 2.6 of \cite{FO}.
\begin{lemma}\label{lemma_conv}
Let $y_m \to y_0 \in \pa M$ be an isolated blow-up point of a sequence $\{U_m\}_{m \in \N}$ of solutions to \eqref{eq_Yamabe_3}.
In addition, suppose that $\{R_m\}_{m \in \N}$ and $\{\tau_m\}_{m \in \N}$ are arbitrary sequences of positive numbers such that $R_m \to \infty$ and $\tau_m \to 0$ as $m \to \infty$.
Then $p_m \to \frac{N}{N-2}$ as $m \to \infty$, and $\{U_\ell\}_{\ell \in \N}$ and $\{p_\ell\}_{\ell \in \N}$
have subsequences $\{U_{\ell_m}\}_{m \in \N}$ and $\{p_{\ell_m}\}_{m \in \N}$ such that
\begin{equation}\label{eq_conv}
\left\| \ep_{\ell_m}^{1 \over p_{\ell_m}-1} U_{\ell_m}\(\ep_{\ell_m} \cdot\) - W_{1,0} \right\|_{C^2(\overline{B^N_+(0,R_m)})} \le \tau_m
\end{equation}
in $g_m$-Fermi coordinates centered in $y_m$ and $R_m \ep_{\ell_m} \to 0$ as $m \to \infty$.
\end{lemma}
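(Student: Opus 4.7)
The plan is to rescale around the blow-up point, pass to a limit equation on the Euclidean half-space, identify the limit as a standard bubble, and then convert the resulting subsequential convergence into the quantitative diagonal statement. Working in $g_m$-Fermi coordinates centered at $y_m$, introduce
\begin{equation*}
V_m(x) = \ep_m^{1/(p_m-1)}\, U_m(\ep_m x), \qquad x \in \R^N_+,\ |x| \le \rho_1/\ep_m.
\end{equation*}
Since $\ep_m = M_m^{-(p_m-1)}$ we have $V_m(0) = 1$, and because $y_m$ is a local maximum of $U_m|_{\pa M}$, the origin is a local maximum of $V_m|_{\pa \R^N_+}$. The isolated blow-up bound in Definition \ref{def_blow}(2) yields $V_m(x) \le C |x|^{-1/(p_m-1)}$ on the rescaled domain, so $V_m$ is uniformly bounded on each compact subset of $\overline{\R^N_+}\setminus\{0\}$.

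A direct change of variables shows that $V_m$ solves an equation of the same type as \eqref{eq_Yamabe_3} but with the rescaled metric $(\ep_m^{-2}(g_m)_{ab})(\ep_m x)$, which converges in $C^4_{\mathrm{loc}}(\overline{\R^N_+})$ to the Euclidean metric. The interior lower-order term carries a factor $\ep_m^2$ and the mean-curvature term on the boundary carries a factor $\ep_m$, both tending to zero. Passing to a subsequence (still denoted $\{U_{\ell_m}\}$) we may assume $p_{\ell_m} \to p_0 \in [1+\vep_0,\tfrac{N}{N-2}]$; then standard interior and boundary elliptic regularity, together with the bound away from $0$, deliver $V_{\ell_m} \to V$ in $C^2_{\mathrm{loc}}(\overline{\R^N_+})$, where $V \ge 0$, $V(0)=1$, and
\begin{equation*}
-\Delta V = 0 \text{ in } \R^N_+, \qquad -\frac{\pa V}{\pa x_N} = (N-2)\, V^{p_0} \text{ on } \R^n.
\end{equation*}
The Hopf lemma gives $V>0$ on $\overline{\R^N_+}$, and a Pohozaev identity on $B^N_+(0,\rho)$ as $\rho \to \infty$ (using the decay $V(x) = O(|x|^{-1/(p_0-1)})$ inherited from $V_m$) rules out positive solutions in the strictly subcritical range. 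Hence $p_0 = \tfrac{N}{N-2}$; since every subsequential limit is $\tfrac{N}{N-2}$, the full sequence satisfies $p_m \to \tfrac{N}{N-2}$.

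Once $p_0$ is critical, the classification recalled in Subsection \ref{subsec_bubble} forces $V = W_{\lambda,\xi}$ for some $\lambda>0$ and $\xi \in \R^n$. The normalizations $V(0) = 1$ and the fact that $0$ is a local maximum of $V|_{\pa \R^N_+}$ pin down $\lambda=1$ and $\xi = 0$, so $V = W_{1,0}$. The quantitative statement \eqref{eq_conv} together with $R_m \ep_{\ell_m} \to 0$ then follows by a standard diagonal extraction: given $\{R_m\}$ and $\{\tau_m\}$, select for each $m$ an index $\ell_m$ (strictly increasing in $m$) large enough that both $\|V_{\ell_m} - W_{1,0}\|_{C^2(\overline{B^N_+(0,R_m)})} \le \tau_m$ and $\ep_{\ell_m} \le 1/(m R_m)$ hold. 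The main obstacle is the compactness step up to the Neumann boundary together with the exclusion of subcritical limits on the half-space; after these are handled, the Li--Zhu classification does the remaining heavy lifting.
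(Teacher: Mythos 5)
Your route (rescale at $y_m$, pass to a limit problem on $\R^N_+$, exclude subcritical exponents by Pohozaev, classify by Li--Zhu, then diagonalize) is exactly the standard one behind the references the paper cites for this lemma (Proposition~1.4 of \cite{HL}, Lemma~2.6 of \cite{FO}). But there is a genuine gap at the one nontrivial point of the argument: you never establish a uniform bound for $V_m$ \emph{near the origin}. The isolated blow-up estimate only gives $V_m(x)\le C|x|^{-1/(p_m-1)}$, which degenerates as $x\to 0$, so it yields equiboundedness (hence $C^2$-compactness) only on compact subsets of $\overline{\R^N_+}\setminus\{0\}$. From this you cannot conclude $V_{\ell_m}\to V$ in $C^2_{\mathrm{loc}}(\overline{\R^N_+})$: a priori a second, more concentrated bubble could sit at a scale $\ll \ep_m$ on top of the first one (note that $W_{\lambda,0}$ with $\lambda\to 0$ satisfies the equation and is bounded on the annulus $\{1/2\le|x|\le 2\}$ while blowing up at $0$, so no maximum-principle argument can close this — the nonlinear Neumann term has the wrong sign). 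Without the bound near $0$ you cannot pass $V_m(0)=1$ and $\nabla_{\bx}V_m(0)=0$ to the limit to pin down $\lambda=1$, $\xi=0$, you cannot even guarantee $V\not\equiv 0$, and above all the conclusion \eqref{eq_conv} is a $C^2$ estimate on the full half-ball $\overline{B^N_+(0,R_m)}$ \emph{including the origin}, which is out of reach. The observation that $0$ is a local maximum of $V_m|_{\pa\R^N_+}$ does not rescue this, since the neighborhood on which $y_m$ maximizes $U_m|_{\pa M}$ is not uniform and may shrink to a point after rescaling.

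The missing step is exactly what Proposition~\ref{prop_blow_a} is for (which is why it precedes the lemma). By part (3), $y_m$ lies within $C_1^{p_m-1}\ep_m$ of some selected local maximum $z_m$; combining part (2) at $z_m$ with the isolated blow-up upper bound centered at $y_m$, one rules out $U_m(z_m)\gg M_m$ (otherwise the lower bound $U_m\ge c\,U_m(z_m)$ on $B(z_m,U_m(z_m)^{-(p_m-1)})$ would violate $U_m(y)\le C d(y,y_m)^{-1/(p_m-1)}$ for $R$ large), whence $U_m(z_m)\le C M_m$ and part (2) gives $U_m\le C M_m$ on a ball $B(y_m,c\,\ep_m)$. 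This supplies the uniform bound $V_m\le C$ on a fixed half-ball around the origin; together with your bound away from $0$, elliptic theory up to the Neumann boundary then yields the $C^2_{\mathrm{loc}}(\overline{\R^N_+})$ convergence you assert, and the remainder of your argument (Pohozaev exclusion of $p_0<\tfrac{N}{N-2}$ using the inherited decay, the Li--Zhu classification, the normalization via $V(0)=1$ and $\nabla_{\bx}V(0)=0$, and the diagonal extraction ensuring $R_m\ep_{\ell_m}\to 0$) is correct as written.
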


\noindent Therefore, we can select $\{R_m\}_{m \in \N}$ and $\{U_{\ell_m}\}_{m \in \N}$ satisfying \eqref{eq_conv} and $R_m \ep_{\ell_m} \to 0$.
In order to simplify notations, we will use $\{U_m\}_{m \in \N}$ instead of $\{U_{\ell_m}\}_{m \in \N}$, and so on.

The following result is a simple consequence of Lemma \ref{lemma_conv} with the selection $\tau_m = \frac{1}{2} w_{1,0}(R_m)$.
Its proof is given in Corollary 3.6 of \cite{KMW}.
\begin{cor}\label{cor_i_blow}
Suppose that $y_m \to y_0 \in \pa M$ is an isolated blow-up point of a sequence $\{U_m\}_{m \in \N}$ of solutions to \eqref{eq_Yamabe_3}.

\medskip \noindent (1) If $\{\wtu_m\}_{m \in \N}$ is a sequence of solutions to \eqref{eq_Yamabe_30} constructed as in Subsection \ref{subsec_blow},
then $y_m \to y_0 \in \pa M$ is an isolated blow-up point of $\{\wtu_m\}_{m \in \N}$.

\medskip \noindent (2) The function $\ou_m$ in \eqref{eq_ou} has exactly one critical point in the interval $(0, R_m \ep_m)$ for large $m \in \N$.
In particular, if the isolated blow-up point $y_0 \in \pa M$ of $\{U_m\}_{m \in \N}$ is also simple, then $\ou_m'(r) < 0$ for all $r \in [R_m \ep_m, r_3)$; see Definition \ref{def_blow} (3).
\end{cor}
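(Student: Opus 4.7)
The plan is to transfer the $C^2$-convergence furnished by Lemma \ref{lemma_conv} to the two quantities in question, namely the conformally rescaled solution $\wtu_m$ and the weighted spherical average $\ou_m$, both against their limiting Euclidean counterparts attached to the bubble $W_{1,0}$.

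For part (1), the conformal factor $\omega_m$ built in Subsection \ref{subsec_blow} satisfies $\omega_m(y_m)=1$, $\nabla \omega_m(y_m)=0$, and converges in $C^4(M)$, so on a fixed neighborhood of $y_0$ one obtains uniform two-sided bounds $c\le \omega_m\le C$. This immediately gives $\wtu_m(y_m)=U_m(y_m)\to\infty$ together with the isolated-blow-up estimate $\wtu_m(y)\le C'\, d_{\tig_m}(y,y_m)^{-1/(p_m-1)}$, using that $d_{g_m}$ and $d_{\tig_m}$ are comparable. To upgrade $y_m$ to a genuine local maximum of $\wtu_m|_{\pa M}$, I would apply Lemma \ref{lemma_conv} in the $\tig_m$-Fermi chart; the strict local maximality of $W_{1,0}|_{\pa\R^N_+}$ at the origin, combined with the $C^2$-convergence of the rescaled $\wtu_m$, supplies a nearby local maximum $\tilde{y}_m$ with $d_{\tig_m}(\tilde{y}_m,y_m)=O(\ep_m)$, so $\tilde{y}_m\to y_0$.

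For part (2), I would apply Lemma \ref{lemma_conv} with the specific choice $\tau_m=\frac{1}{2} w_{1,0}(R_m)$, and then change variables $\rho = r\ep_m$ in the definition \eqref{eq_ou}. After cancelling the two matching powers of $\ep_m^{1/(p_m-1)}$, one obtains
\[
\ou_m(r\ep_m) \;=\; r^{1/(p_m-1)} \cdot \frac{\int_{\pa_I B^N_+(0,r)} \bigl[\ep_m^{1/(p_m-1)} U_m(\ep_m\,\cdot)\bigr]\, dS_{g_m(\ep_m\cdot)}}{\int_{\pa_I B^N_+(0,r)} dS_{g_m(\ep_m\cdot)}}.
\]
The bracketed function converges in $C^2$ to $W_{1,0}$ on $B^N_+(0,R_m)$, and $g_m(\ep_m\cdot)$ tends to the Euclidean metric locally, so the right-hand side converges in $C^1_{\mathrm{loc}}((0,\infty))$ to
\[
V(r) \;=\; r^{(N-2)/2}\cdot \frac{\int_{\pa_I B^N_+(0,r)} W_{1,0}\, dS}{\int_{\pa_I B^N_+(0,r)} dS}.
\]
A direct computation exploiting the explicit form of $W_{1,0}$ shows that $V$ is positive on $(0,\infty)$, vanishes at both endpoints, and admits a unique critical point $r_*>0$ at which it attains a strict maximum. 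Hence for $m$ large, $\ou_m$ has exactly one critical point in $(0,R_m\ep_m)$ (lying near $r_*\ep_m$), and $\ou_m'(R_m\ep_m)<0$ because $R_m\to\infty$ lands deep inside the decreasing regime of $V$.

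The simple-blow-up conclusion then follows immediately: Definition \ref{def_blow}(3) grants $\ou_m$ a unique critical point in $(0,\rho_3)$, which must coincide with the one already located in $(0,R_m \ep_m)$. Consequently $\ou_m'$ does not vanish on $[R_m\ep_m,\rho_3)$, and continuity together with $\ou_m'(R_m\ep_m)<0$ forces $\ou_m'<0$ throughout the interval. The genuinely nontrivial ingredient, and the step I expect to be the main obstacle, is the unique-critical-point property of the limiting function $V$; this is a one-variable calculus exercise that ultimately rests on the strict decay of $W_{1,0}$ along rays emanating from the origin in $\overline{\R^N_+}$.
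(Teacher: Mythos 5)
Your overall strategy --- transferring the blow-up structure through the conformal factor for (1), and rescaling to the bubble profile with Lemma \ref{lemma_conv} for (2) --- is the same one the paper relies on (the paper defers to Corollary 3.6 of \cite{KMW}, citing precisely Lemma \ref{lemma_conv} with $\tau_m = \frac12 w_{1,0}(R_m)$). Part (1) is essentially complete; in fact you can avoid relocating the maximum altogether: since $\omega_m(y_m)=1$ and $\nabla \omega_m(y_m)=0$ by Lemma \ref{lemma_conf}, the point $y_m$ is already a critical point of $\wtu_m|_{\pa M}$, and it remains a local maximum because the Hessian correction $U_m(y_m)\nabla^2\omega_m(y_m)=O(M_m)$ is dominated by $\nabla^2 U_m(y_m)\sim -c\,M_m^{1+2(p_m-1)}$ coming from the bubble profile.

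The gap is in part (2), at the word ``hence.'' Convergence in $C^1_{\mathrm{loc}}((0,\infty))$ gives exactly one critical point in $(0,A)$ for each \emph{fixed} $A$, but the claim concerns the interval $(0,R_m\ep_m)$ with $R_m\to\infty$, so you must separately exclude critical points of $V_m(r):=\ou_m(r\ep_m)$ for $r\in[A,R_m]$. This is exactly where the quantitative choice $\tau_m=\frac12 w_{1,0}(R_m)$ --- which you quote but never use --- has to enter, and the naive derivative comparison does not close: the error in $V_m'(r)$ coming from the $C^1$-closeness is of size $r^{(N-2)/2}\tau_m$, which at $r\sim R_m$ is of order $R_m^{-(N-2)/2}$ and swamps $|V'(R_m)|\sim R_m^{-N/2}$, so the sign of $V_m'$ near $r=R_m$ is not controlled this way. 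The standard repair is to integrate the equation over $B^N_+(0,r)$ (divergence theorem plus the boundary condition), which converts the critical-point equation $V_m'(r)=0$ into an identity between \emph{zeroth-order} averages of $V_m$ and of $V_m^{p_m}$; for those, the bound $\tau_m\le\frac12 w_{1,0}(R_m)$ yields the multiplicative control $c\,W_{1,0}\le V_m\le C\,W_{1,0}$ on all of $B^N_+(0,R_m)$ and hence the required sign on $[A,R_m]$. Separately, the unique-critical-point property of $V(r)=r^{(N-2)/2}\overline{W}(r)$ --- which you correctly identify as the other essential ingredient --- is asserted rather than proved; it is true, but since $W_{1,0}$ is not radial its verification goes through the same integral identity rather than a direct one-variable computation.
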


Thirdly, we examine how a sequence $\{U_m\}_{m \in \N}$ of solutions to \eqref{eq_Yamabe_3} behaves near isolated simple blow-up points.
See Proposition 4.3 of \cite{Al} for its proof.
\begin{prop}\label{prop_iso}
Assume that $N \ge 3$ and $y_m \to y_0 \in \pa M$ is an isolated simple blow-up point of a sequence $\{U_m\}_{m \in \N}$ of solutions to \eqref{eq_Yamabe_3},
and $\{\wtu_m\}_{m \in \N}$ is a sequence of solutions to \eqref{eq_Yamabe_30} constructed as in Subsection \ref{subsec_blow}.
Then there exists $C > 0$ and $\rho_4 \in (0,\rho_3)$ independent of $m \in \N$ such that
\begin{equation}\label{eq_U_m_est}
M_m \left| \nabla^{\ell} \wtu_m(x) \right| \le C|x|^{-(N-2+\ell)}
\quad \text{in } \left\{x \in \R^N_+: 0 < |x| \le \rho_4\right\}
\end{equation}
for $\ell = 0, 1, 2$ and
\[M_m \wtu_m(x) \ge C^{-1} G_m(x) \quad \text{in } \left\{x \in \R^N_+: R_m\ep_m \le |x| \le \rho_4\right\}\]
in $\tig_m$-Fermi coordinate system centered at $y_m$.
Here, $G_m$ is the Green's function satisfying
\[\begin{cases}
L_{g_m} G_m = 0 &\text{in } B^N_+(0,\rho_4), \\
B_{g_m} G_m = \delta_0 &\text{on } B^n(0,\rho_4), \\
G_m = 0 &\text{on } \pa_I B^N_+(0,\rho_4), \\
\lim_{|x| \to 0} |x|^{N-2} G_m(x) = 1,
\end{cases}\]
and $\delta_0$ is the Dirac measure centered at $0 \in \R^N_+$. Also,
\begin{equation}\label{eq_M_m_est}
M_m^{\delta_m} = M_m^{{N \over N-2}-p_m} \to 1 \quad \text{as } m \to \infty.
\end{equation}
\end{prop}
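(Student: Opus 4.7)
The plan is to follow the standard blow-up analysis introduced by Schoen for the Yamabe problem and adapted to the boundary setting by Almaraz (Proposition 4.3 of \cite{Al}), with the isolated simple hypothesis of Corollary \ref{cor_i_blow} playing the decisive role throughout. The core of the argument is the pointwise bound $M_m \wtu_m(x) \le C|x|^{-(N-2)}$ (the case $\ell = 0$) on the half-ball $\{0 < |x| \le \rho_4\}$. In the inner region $|x| \le R_m \ep_m$ this is immediate from the $C^2$-convergence of $\ep_m^{1/(p_m-1)}\wtu_m(\ep_m\cdot)$ to the standard bubble $W_{1,0}$ supplied by Lemma \ref{lemma_conv}. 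To propagate it through the outer annulus $R_m\ep_m \le |x| \le \rho_4$, I would proceed by contradiction: assume a sequence $x_m$ with $d_m := |x_m| \to 0$ and $M_m \wtu_m(x_m) d_m^{N-2} \to \infty$, and rescale via
\[v_m(y) := d_m^{1/(p_m-1)}\, \wtu_m(d_m y), \qquad y \in B^N_+(0, \rho_4/d_m) \setminus \{0\}.\]
The isolated blow-up condition yields the uniform bound $v_m(y) \le C|y|^{-1/(p_m-1)}$ on every compact subset of $\overline{\R^N_+} \setminus \{0\}$. Elliptic regularity and the $C^4$-convergence of the rescaled metrics to the Euclidean one then furnish a positive limit $v_\infty$ solving a problem of the form \eqref{eq_W_eq} on $\R^N_+ \setminus \{0\}$. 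The weighted spherical average $\bar v_m$ inherits the strict inequality $\bar v_m'(1) < 0$ from Corollary \ref{cor_i_blow}, which passes to the limit; combined with the classification of entire positive solutions from \cite{LZhu2}, this forces $v_\infty$ to be either identically zero or to have a prescribed singular profile at the origin, and in either case the contradiction hypothesis $M_m \wtu_m(x_m) d_m^{N-2} \to \infty$ clashes with the boundedness of $v_m$ along $|y| = 1$.

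Once the $\ell = 0$ bound is secured, the gradient and Hessian estimates $\ell = 1, 2$ follow by applying standard elliptic regularity up to the boundary to $\wtu_m$ on dyadic half-annuli centred at $y_m$: on each such annulus the $\ell = 0$ estimate provides the requisite $L^\infty$-bound, and interior/boundary Schauder theory delivers the derivative bounds at the correct scaling. For the Green's-function lower bound, I would normalise $V_m := M_m \wtu_m$, extract a $C^2_{\mathrm{loc}}(\overline{B^N_+(0,\rho_4)} \setminus \{0\})$ limit, and use the two-sided asymptotics together with Harnack's inequality to identify it with a positive multiple of the Green's function $G_m$ of Proposition \ref{prop_iso}, characterised by the normalisation $|x|^{N-2} G_m(x) \to 1$. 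A maximum-principle comparison then transfers this asymptotic identification to the stated lower bound $M_m \wtu_m \ge C^{-1} G_m$ on the outer annulus.

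The relation $M_m^{\delta_m} \to 1$ is extracted from the local Pohozaev identity \eqref{eq_poho_2} applied on $B^N_+(0,\rho_4)$. Using the pointwise estimates just established, every boundary term in $\mcp(\wtu_m, \rho_4)$ and every interior term on the right-hand side of \eqref{eq_poho_2} can be expanded in powers of $M_m^{-1}$ and $M_m^{-\delta_m}$; the inner rescaling of Lemma \ref{lemma_conv} identifies the leading coefficient of $\int f\, \wtu_m^{p_m+1}$ in terms of a bubble energy, and matching orders forces $M_m^{\delta_m} \to 1$. The chief obstacle in the whole argument is the first contradiction step: because the rescaled problem lives on an annulus in $\R^N_+$ rather than on the full half-space, the classification of \cite{LZhu2} does not apply off the shelf to singular limits, and one must lean on the strict monotonicity $\bar v_m'(1) < 0$ granted by the simple hypothesis to rule out every candidate limit profile. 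Everything downstream of this pointwise control reduces to standard elliptic theory.
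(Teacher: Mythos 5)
The paper does not reprove this proposition; it defers entirely to Proposition 4.3 of \cite{Al}, whose argument runs in a specific order: (i) a \emph{crude} decay estimate $\wtu_m(x)\le C M_m^{-\lambda_m}|x|^{2-N+t_m}$ on the outer annulus, obtained from an explicit comparison function and the maximum principle (the isolated simple hypothesis guaranteeing, via the decay of $\ou_m$, that the boundary coefficient $\wtu_m^{\,p_m-1}$ is small there); (ii) the Pohozaev identity fed with this crude bound to obtain \eqref{eq_M_m_est}; (iii) only then the sharp estimate \eqref{eq_U_m_est} via the Green representation, followed by elliptic regularity for the derivatives and the maximum principle for the lower bound. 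Your toolbox is the right one, but both the ordering and the mechanism of your key step (the outer upper bound) have genuine gaps.

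First, the claim that the case $\ell=0$ of \eqref{eq_U_m_est} is ``immediate'' in the inner region $|x|\le R_m\ep_m$ fails at the matching scale: by Lemma \ref{lemma_conv}, at $|x|=R_m\ep_m$ one has $M_m\wtu_m(x)\,|x|^{N-2}\approx M_m^2\,R_m^{2-N}\,(R_m\ep_m)^{N-2}=M_m^2\ep_m^{N-2}=M_m^{(N-2)\delta_m}$, so the sharp bound already presupposes \eqref{eq_M_m_est}, which you derive only at the very end from that same sharp bound. For subcritical $p_m$ this is circular; the crude estimate with a loss is precisely what breaks the circle. Second, the contradiction you set up does not close: with $v_m(y)=d_m^{1/(p_m-1)}\wtu_m(d_m y)$, boundedness of $v_m$ on $|y|=1$ gives only $\wtu_m(x_m)\le C d_m^{-1/(p_m-1)}$, hence $M_m\wtu_m(x_m)d_m^{N-2}\le C\,(d_m/\ep_m)^{\frac{N-2}{2}+o(1)}$, which is entirely compatible with $M_m\wtu_m(x_m)d_m^{N-2}\to\infty$ since $d_m\ge R_m\ep_m$ and $R_m \to \infty$. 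The classification of singular limits and the monotonicity of $\bar v_\infty$ rule out certain limit profiles but do not by themselves produce the quantitative upper bound; the actual content of the outer estimate is the intermediate comparison-function (or flux) step your sketch omits. The remaining parts --- elliptic regularity on dyadic half-annuli for $\ell=1,2$, the Harnack/maximum-principle identification of the limit of $M_m\wtu_m$ with a positive multiple of $G_m$, and the Pohozaev computation for \eqref{eq_M_m_est} --- are standard and correctly described, but they all sit downstream of the missing step.
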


\section{Linear problems and refined blow-up analysis}\label{sec_lin}
\subsection{Linear problems}
In this subsection, we study the linear problem
\begin{equation}\label{eq_lin}
\begin{cases}
- \Delta \Psi = 2 \ep \pi_{ij} x_N \pa_{ij} W_{1,0} &\text{in } \R^N_+ = \R^n \times (0,\infty),\\
- \lim\limits_{x_N \to 0} \dfrac{\pa \Psi}{\pa x_N} = N w_{1,0}^{2 \over N-2} \Psi &\text{on } \R^n.
\end{cases}
\end{equation}
which arises from the first-order expansion of the metric on $M$; see Lemma \ref{lemma_metric}.
Here, $\ep > 0$ is a small parameter, $W_{1,0}$ is the function defined in \eqref{eq_W_lx}, $w_{1,0}(\bx) = W_{1,0}(\bx, 0)$ for $\bx \in \R^n$,
and $\pi$ is a trace-free symmetric $2$-tensor (that is, $n \times n$-matrices).

\begin{prop}\label{prop_lin}
Suppose that $N \ge 3$. There exists a smooth solution $\Psi$ to \eqref{eq_lin} and a constant $C > 0$ depending only on $N$ such that
\begin{equation}\label{eq_lin_1}
\left| \nabla^{\ell} \Psi(x) \right| \le C \ep \(\max\limits_{i, j = 1, \cdots, n} |\pi_{ij}|\) {1 \over 1+|x|^{N-3+\ell}} \quad \text{in } \R^N_+
\end{equation}
for any $\ell \in \N \cup \{0\}$,
\begin{equation}\label{eq_lin_2}
\Psi(0) = {\pa \Psi \over \pa x_1}(0) = \cdots = {\pa \Psi \over \pa x_n}(0) = 0
\quad \text{and} \quad
\int_{\R^n} w_{1,0}^{N \over N-2} \Psi d\bx = 0.
\end{equation}
\end{prop}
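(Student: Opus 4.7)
The plan is to build $\Psi$ as a sum $\Psi = \Psi_I + \Psi_{II}$ by exploiting the rotational symmetry of $W_{1,0}$ in $\bx$ together with $\pi_{ii}=0$. Writing $W_{1,0}(\bx, x_N) = w(r,x_N)$ with $r=|\bx|$, a direct computation yields
\[
\pi_{ij}\, \pa_{ij} W_{1,0} = \pi_{ij}\, \bx_i \bx_j \(\frac{\pa_r^2 w}{r^2} - \frac{\pa_r w}{r^3}\),
\]
so the source in \eqref{eq_lin} is of the form $\ep\, \pi_{ij}\, \bx_i \bx_j\, \Phi(r, x_N)$ with $\Phi$ an explicit profile, i.e., it lives in the degree-$2$ spherical harmonic subspace of $L^2(\S^{n-1})$. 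Since the kernel elements $Z^0_{1,0},\ldots,Z^n_{1,0}$ of the linearized boundary operator live in spherical modes of degrees $0$ and $1$, the linearization is invertible on the degree-$2$ subspace, and any solution lying in this subspace automatically satisfies \eqref{eq_lin_2}: if $\Psi = \pi_{ij}\,\bx_i\bx_j\, \psi$ for some radial profile $\psi$, then $\Psi$ and its tangential first derivatives vanish at $\bx=0$, while $\int_{\R^n} w_{1,0}^{N/(N-2)} \Psi\, d\bx$ vanishes after angular integration because $\int_{\S^{n-1}} \omega_i\omega_j\, dS = \frac{|\S^{n-1}|}{n}\delta_{ij}$ and $\pi_{ii}=0$.

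For the bulk equation I would use the ansatz $\Psi_I = \ep\, \pi_{ij}\, \bx_i \bx_j\, \psi_I(r,x_N)$; plugging into $-\Delta \Psi_I = 2\ep\, \pi_{ij}\, x_N\, \pa_{ij} W_{1,0}$ and invoking $\pi_{ii}=0$ collapses the problem onto the scalar Poisson-type equation
\[
-\pa_r^2 \psi_I - \frac{n+3}{r}\pa_r \psi_I - \pa_{x_N}^2 \psi_I = 2 x_N \(\frac{\pa_r^2 w}{r^2}-\frac{\pa_r w}{r^3}\)
\]
in $\{r>0,\, x_N>0\}$, which can be read as the Laplace equation in effective dimension $N+4$ for a function depending on $(r, x_N)$ and solved via the corresponding Newtonian potential. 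The explicit decay of $W_{1,0}$ and its derivatives then yields a smooth $\psi_I$ for which $|\nabla^{\ell} \Psi_I(x)| \le C \ep\, (\max|\pi_{ij}|)(1+|x|)^{-(N-3+\ell)}$. The boundary defect $g := -\pa_{x_N} \Psi_I|_{x_N=0} - N w_{1,0}^{2/(N-2)} \Psi_I|_{x_N=0}$ still lies in the degree-$2$ mode, and I then define $\Psi_{II}$ as the unique solution in the degree-$2$ subspace of $D^{1,2}(\R^N_+)$ of the homogeneous bulk equation with boundary source $-g$, via the Fredholm alternative; the required trivial kernel on this subspace is furnished by the D\'avila et al.\ classification recalled in Subsection \ref{subsec_bubble}. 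Matching pointwise decay for $\Psi_{II}$ is obtained from a Green's-function representation for the linearized operator restricted to the degree-$2$ subspace together with barrier arguments based on super-solutions of size $(1+|x|)^{-(N-3)}$, and standard Schauder theory then provides the higher-order bounds.

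The main obstacle is precisely this sharp decay rate: a generic solution of the linearization with a compactly supported source decays only like $|x|^{2-N}$, whereas the statement demands one extra power $|x|^{3-N}$. The improvement is a consequence of the degree-$2$ projection and is not visible from a crude energy argument. For $\Psi_I$ it is exposed cleanly by the explicit factor $\bx_i\bx_j$ and the Newtonian potential identity in the higher effective dimension, but for $\Psi_{II}$ it requires a detailed analysis of the Green's function of the linearized operator on the degree-$2$ subspace. These technical estimates, which I would not grind through here, form the substance of Appendix \ref{sec_app_PU}.
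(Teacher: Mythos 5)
Your construction is not the paper's argument, and as written it has a genuine gap in exactly the dimensions this paper needs. The paper's proof is a soft two-step argument: it quotes Proposition 5.1 of \cite{Al}, which produces smooth solutions $\Psi_{\Lambda}$ of the truncated problem \eqref{eq_lin_lam} satisfying \eqref{eq_lin_1}--\eqref{eq_lin_2} with constants independent of $\Lambda$, and then lets $\Lambda \to \infty$ using elliptic estimates. Your route --- project onto the degree-$2$ spherical-harmonic sector, build an explicit particular solution for the bulk equation by passing to an effective dimension $N+4$, and correct the boundary condition by a Fredholm alternative in $D^{1,2}(\R^N_+)$ using the D\'avila--del Pino--Sire description of the kernel --- is essentially the alternative the authors themselves mention in the remark after the proposition, where they note that for $N \ge 5$ one can argue as in Proposition 4.1 of \cite{KMW}. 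Your degree-$2$ observations (vanishing of $\Psi$ and its tangential derivatives at $0$, and of $\int_{\R^n} w_{1,0}^{N/(N-2)}\Psi$) are correct, and your radial reduction of the bulk equation is close to what the paper's Appendix \ref{sec_app_PU} actually does to produce $\Phi$.

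The gap: for $N = 3$ and $N = 4$ the function you are trying to construct does not lie in $D^{1,2}(\R^N_+)$. The claimed decay $|\nabla \Psi| \le C(1+|x|)^{-(N-2)}$ gives $|\nabla\Psi|^2 \sim |x|^{-4}$ for $N = 4$, which is not integrable over $\R^4_+$, and for $N = 3$ the bound \eqref{eq_lin_1} with $\ell = 0$ only asserts that $\Psi$ is bounded. Hence ``the unique solution in the degree-$2$ subspace of $D^{1,2}(\R^N_+)$'' is not a well-defined object for the boundary defect you generate (which decays only like $|\bx|^{-(N-2)}$, too slowly for the energy-space duality), and the Fredholm alternative in $D^{1,2}$ does not apply. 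Since $N = 4$ is one of the two main cases of the paper, this is not a removable technicality: one must either work with truncated sources and prove $\Lambda$-uniform weighted bounds, as \cite{Al} and the paper do, or replace the variational step by a direct Green's-function/ODE analysis in the degree-$2$ mode with weighted norms. A minor further point: the improved rate $|x|^{3-N}$ (versus the generic $|x|^{2-N}$) is forced by the slow $O(|\bx|^{2-N})$ decay of the boundary source rather than by the degree-$2$ projection; the projection buys solvability and the orthogonality relations \eqref{eq_lin_2}, not the decay rate.
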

\begin{proof}
Pick a smooth function $\chi: [0,\infty) \to [0,1]$ such that $\chi(t) = 1$ on $[0,1]$ and 0 in $[2, \infty)$.
Set also $\chi_{\Lambda}(t) = \chi(\frac{t}{\Lambda})$ for any $\Lambda > 0$.
In Proposition 5.1 of \cite{Al}, it was proved that for each $\Lambda > 0$, there exists a smooth function $\Psi_{\Lambda}$ to
\begin{equation}\label{eq_lin_lam}
\begin{cases}
- \Delta \Psi = 2 \ep \pi_{ij} \chi_{\Lambda}(|x|) x_N \pa_{ij} W_{1,0} &\text{in } \R^N_+,\\
- \lim\limits_{x_N \to 0} \dfrac{\pa \Psi}{\pa x_N} = N w_{1,0}^{2 \over N-2} \Psi &\text{on } \R^n
\end{cases}
\end{equation}
satisfying \eqref{eq_lin_1}-\eqref{eq_lin_2} for some constant $C > 0$ depending only on $N$ (thereby being independent of $\Lambda > 0$).

Now, we choose a sequence $\{\Lambda_m\}_{m \in \N}$ of positive increasing numbers which diverges to $\infty$.
By the standard elliptic estimates, we may assume that the sequence $\{\Psi_{\Lambda_m}\}_{m \in \N}$ of solutions
to \eqref{eq_lin_lam} with $\Lambda = \Lambda_m$ converges to a smooth solution $\Psi$ to \eqref{eq_lin} in $C^2_{\text{loc}}(\overline{\R^N_+})$.
In particular, $\Psi$ satisfies \eqref{eq_lin_1}-\eqref{eq_lin_2}.
\end{proof}
\begin{rmk}
If $N \ge 5$, we infer from \eqref{eq_lin_1} that $\Psi \in D^{1,2}(\R^N_+)$.
In this case, one can argue as in Proposition 4.1 of \cite{KMW} to deduce the above proposition.
Also, \eqref{eq_lin}, \eqref{eq_lin_2} and the condition $\text{trace}(\pi) = 0$ imply
\[\int_{\R^N_+} \nabla \Psi \cdot \nabla W_{1,0} dx = 0.\]
\end{rmk}

For a better understanding of the function $\Psi$, we decompose it into two pieces:
The first part $\Phi$ is a rational function with parameters $a_1,\, a_2 \in \R$ whose Laplacian is the same as that of $\Psi$ in $\R^N_+$,
whose precise form is given in Lemma \ref{lemma_Phi}.
The second part $\Xi$ is a harmonic function with prescribed boundary condition, which is described in Lemma \ref{lemma_U}.
The proof of the lemmas are postponed until Appendix \ref{sec_app_PU}.
\begin{lemma}\label{lemma_Phi}
Suppose that $N \ge 4$. Given any $a_1, a_2 \in \R$, let
\begin{multline}\label{eq_Phi_n1}
\Phi(x) = \frac{\ep \pi_{ij} x_ix_j}{(|\bx|^2 + (x_N+1)^2)^{N \over 2}} \left[ \(\frac{N-2}{2}\) (x_N-1) \right. \\
\left. + \frac{a_1 (x_N+1)}{(|\bx|^2 + (x_N+1)^2)^2} + \frac{a_2}{|\bx|^2 + (x_N+1)^2} \right]
\end{multline}
in $\R^N_+$. Then it is a solution of
\begin{equation}\label{eq_Phi_n}
-\Delta \Phi = 2\ep \pi_{ij} x_N \pa_{ij} W_{1,0} \quad \text{in } \R^N_+.
\end{equation}
\end{lemma}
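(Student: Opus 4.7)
The plan is to verify \eqref{eq_Phi_n} by a direct calculation, exploiting the trace-free property of $\pi$ and the homogeneity of the building blocks in $\Phi$. Set $r = (|\bx|^2+(x_N+1)^2)^{1/2}$, so that $W_{1,0}=r^{2-N}$ and
\[
\pa_{ij} W_{1,0} = -(N-2) r^{-N}\delta_{ij} + N(N-2) r^{-N-2} x_ix_j.
\]
Since $\pi_{ii}=0$, the first term is annihilated in the contraction $\pi_{ij}\pa_{ij}W_{1,0}$ and the right-hand side of \eqref{eq_Phi_n} becomes
\[
2\ep \pi_{ij} x_N \pa_{ij} W_{1,0} = 2N(N-2)\ep\, \pi_{ij}x_ix_j x_N\, r^{-N-2}.
\]

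Next I factor $\Phi = \ep\, P\, Q$, where $P := \pi_{ij}x_ix_j$ and
\[
Q := \frac{N-2}{2}(x_N-1)r^{-N} + a_1(x_N+1)r^{-N-4} + a_2 r^{-N-2}.
\]
The Leibniz rule gives $\Delta\Phi=\ep(P\Delta Q+2\nabla P\cdot\nabla Q+Q\Delta P)$, and the crucial simplification is that $\Delta P = 2\pi_{ii}=0$ by the trace-free assumption. Since $\pa_N P =0$ and $\pa_i P = 2\pi_{ij}x_j$, the inner product $\nabla P\cdot\nabla Q$ involves only the horizontal derivatives of $Q$.

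The computation of $\Delta Q$ and $\pa_i Q$ reduces to that of $\Delta(f(x_N)r^{-k})$ and $\pa_i(f(x_N) r^{-k}) = -k f(x_N) r^{-k-2} x_i$. Using the identity $\Delta r^{-k}=k(k-N+2)r^{-k-2}$ (with $r$ based at $(0,-1)$), one obtains
\[
\Delta((x_N-1)r^{-N}) = -4N\, r^{-N-2},\quad \Delta(r^{-N-2}) = 4(N+2)r^{-N-4},
\]
\[
\Delta((x_N+1)r^{-N-4}) = 4(N+4)(x_N+1)r^{-N-6}.
\]
Combining these,
\[
P\Delta Q = \pi_{ij}x_ix_j\Bigl[-2N(N-2)r^{-N-2} + 4a_1(N+4)(x_N+1)r^{-N-6} + 4a_2(N+2)r^{-N-4}\Bigr],
\]
while
\[
2\nabla P\cdot\nabla Q = -\pi_{ij}x_ix_j\Bigl[2N(N-2)(x_N-1)r^{-N-2} + 4a_1(N+4)(x_N+1)r^{-N-6} + 4a_2(N+2)r^{-N-4}\Bigr].
\]
The $a_1$ and $a_2$ contributions cancel identically, which is precisely why those parameters remain free. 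The surviving part gives
\[
P\Delta Q + 2\nabla P\cdot\nabla Q = -2N(N-2)\pi_{ij}x_ix_jx_N r^{-N-2},
\]
so that $-\Delta\Phi = 2N(N-2)\ep\,\pi_{ij}x_ix_jx_N r^{-N-2}$, matching the right-hand side of \eqref{eq_Phi_n}.

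There is no real obstacle here: the proof is bookkeeping once one notices (i) the trace-freeness kills $\Delta P$ together with the $\delta_{ij}$-term in $\pa_{ij}W_{1,0}$, and (ii) the homogeneity in $r$ pairs $P\Delta Q$ against $2\nabla P\cdot\nabla Q$ so that the $a_1,a_2$-terms cancel. The mildly delicate point is keeping the sign conventions straight in $\pa_N r = (x_N+1)/r$ (so that $\pa_N r^{-k} = -k(x_N+1)r^{-k-2}$), which is what forces the factor $(x_N-1)$ rather than $(x_N+1)$ in front of $r^{-N}$ in the definition of $Q$.
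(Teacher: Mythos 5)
Your computation is correct: with $r^2=|\bx|^2+(x_N+1)^2$ the identities $\Delta r^{-k}=k(k+2-N)r^{-k-2}$, $\Delta((x_N-1)r^{-N})=-4Nr^{-N-2}$, $\Delta(r^{-N-2})=4(N+2)r^{-N-4}$ and $\Delta((x_N+1)r^{-N-4})=4(N+4)(x_N+1)r^{-N-6}$ all check out, the cross terms $2\nabla P\cdot\nabla Q$ are computed correctly, the $a_1,a_2$ pieces cancel (equivalently, $\pi_{ij}x_ix_j\,r^{-N-2}$ and $\pi_{ij}x_ix_j(x_N+1)r^{-N-4}$ are harmonic), and the surviving terms combine via $1+(x_N-1)=x_N$ to give exactly $-2N(N-2)\ep\,\pi_{ij}x_ix_jx_N r^{-N-2}=-2\ep\pi_{ij}x_N\pa_{ij}W_{1,0}$, where trace-freeness kills the $\delta_{ij}$ part of $\pa_{ij}W_{1,0}$. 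However, your route is genuinely different from the paper's. The paper does not verify the formula directly; it \emph{derives} $\Phi$ by first solving $-\Delta\Phi_1=(x_N+1)r^{2-N}$ and $-\Delta\Phi_2=r^{2-N}$ through radial ODEs (Lemmas \ref{lemma_Phi_1} and \ref{lemma_Phi_2}), so that $-\Delta(\Phi_1-\Phi_2)=x_NW_{1,0}$, and then sets $\Phi=2\ep\pi_{ij}\pa_{ij}(\Phi_1-\Phi_2)$, using that $\Delta$ commutes with the tangential derivatives $\pa_{ij}$ and that $\pa_{ij}(x_NW_{1,0})=x_N\pa_{ij}W_{1,0}$. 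That construction explains where the two free parameters come from (they are integration constants of the radial ODEs), but it is carried out for $N\ge 5$ and requires separate (logarithmic) modifications when the exponents degenerate, e.g.\ the $1/(N-4)$ factors; the paper only asserts that $N=4$ "can be handled similarly." Your direct verification is more elementary, is uniform in $N\ge 4$ as stated in the lemma, and so actually closes that small gap, at the cost of presenting $\Phi$ as an ansatz pulled out of thin air rather than as the output of a construction.
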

\begin{rmk}
The function $\Phi$ in \eqref{eq_Phi_n1} and the correction term $\psi_{\ep}$ defined in Page 387 of Marques \cite{Ma3} share a similar pointwise behavior.
However, $\Phi$ have two degrees of freedom on the coefficients, while $\psi_{\ep}$ has only one.
\end{rmk}

\begin{lemma}\label{lemma_U}
Suppose that $N \ge 4$. The function $\Xi = \Psi - \Phi$ satisfies
\begin{equation}\label{eq_U}
\begin{cases}
- \Delta \Xi = 0 &\text{in } \R^N_+,\\
-\lim\limits_{x_N \to 0} \dfrac{\pa \Xi}{\pa x_N} = N w_{1,0}^{2 \over N-2} \Xi + q &\text{on } \R^n
\end{cases}
\end{equation}
where
\begin{equation}\label{eq_q}
q(\bx) = \frac{\ep \pi_{ij}x_ix_j}{(|\bx|^2+1)^{N \over 2}} \left[ \frac{N-2}{2} + a_1 \left\{ \frac{1}{(|\bx|^2+1)^2} - \frac{4}{(|\bx|^2+1)^3} \right\} - \frac{2a_2}{(|\bx|^2+1)^2} \right]
\end{equation}
on $\R^n$.
\end{lemma}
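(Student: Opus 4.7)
The strategy is immediate from the decomposition: since Proposition~\ref{prop_lin} gives that $\Psi$ solves \eqref{eq_lin} and Lemma~\ref{lemma_Phi} gives that $\Phi$ satisfies $-\Delta\Phi = 2\ep\pi_{ij}x_N\pa_{ij}W_{1,0}$ on $\R^N_+$, subtracting yields $-\Delta\Xi=0$ on $\R^N_+$ without computation. Everything therefore reduces to a direct verification of the Neumann-type boundary condition with the explicit right-hand side $q$ of \eqref{eq_q}.

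To that end, I write $\Xi=\Psi-\Phi$, take $x_N\to 0$ in the boundary equation of \eqref{eq_lin}, and use $\Psi = \Xi + \Phi$ to get
\[
-\lim_{x_N\to 0}\frac{\pa\Xi}{\pa x_N} = N w_{1,0}^{2/(N-2)}\Xi + \Bigl\{N w_{1,0}^{2/(N-2)}\Phi(\cdot,0) + \lim_{x_N\to 0}\frac{\pa\Phi}{\pa x_N}\Bigr\}.
\]
So the task is to check that the quantity in braces equals $q$.

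The computation is elementary. Setting $r^2 = |\bx|^2+(x_N+1)^2$ and $T = \ep\pi_{ij}x_ix_j$ (which is $x_N$-independent since $i,j$ run only from $1$ to $n$), the form \eqref{eq_Phi_n1} of $\Phi$ reads
\[
\Phi = \frac{T}{r^N}\Bigl[\frac{N-2}{2}(x_N-1) + \frac{a_1(x_N+1)}{r^4} + \frac{a_2}{r^2}\Bigr].
\]
Using $\pa_N r^{-N}=-N(x_N+1)r^{-N-2}$, direct differentiation and evaluation at $x_N=0$ (where $r^2=|\bx|^2+1$) gives two contributions to $\pa_N\Phi|_{x_N=0}$: the first is $-\frac{NT}{r^{N+2}}\bigl[-\frac{N-2}{2}+\frac{a_1}{r^4}+\frac{a_2}{r^2}\bigr]$, which precisely cancels $Nw_{1,0}^{2/(N-2)}\Phi(\cdot,0)$ once one notes $w_{1,0}^{2/(N-2)} = 1/(|\bx|^2+1)$. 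The surviving piece is $\frac{T}{r^N}\bigl[\frac{N-2}{2}+\frac{a_1}{r^4}-\frac{4a_1}{r^6}-\frac{2a_2}{r^4}\bigr]$, and substituting $r^2 = |\bx|^2+1$ produces exactly the expression \eqref{eq_q}.

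The main (and only) obstacle here is bookkeeping: the cancellation between the Dirichlet-trace term $Nw_{1,0}^{2/(N-2)}\Phi(\cdot,0)$ and the first summand of $\pa_N\Phi|_{x_N=0}$ must be tracked carefully so that the coefficients $a_1,a_2$ propagate correctly into $q$. No delicate analysis is needed beyond this, since the harmonicity of $\Xi$ in $\R^N_+$ is free from Lemma~\ref{lemma_Phi}, and smoothness of $\Xi$ in $\overline{\R^N_+}\setminus\{0\}$ follows from the smoothness of $\Psi$ (Proposition~\ref{prop_lin}) and of $\Phi$ away from the pole of $r$.
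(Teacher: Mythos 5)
Your proposal is correct and follows exactly the paper's route: harmonicity of $\Xi$ comes for free by subtracting \eqref{eq_Phi_n} from \eqref{eq_lin}, and the boundary datum $q$ is obtained by plugging the explicit form \eqref{eq_Phi_n1} of $\Phi$ into $N w_{1,0}^{2/(N-2)}\Phi(\cdot,0)+\lim_{x_N\to 0}\pa_N\Phi$. Your bookkeeping (the cancellation of the $-N(x_N+1)r^{-N-2}$ contribution against the trace term, leaving $\frac{T}{r^N}\pa_N G|_{x_N=0}$) checks out and reproduces \eqref{eq_q} exactly.
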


We prove an auxiliary lemma that comes from the mountain pass structure of the fractional Yamabe problem in $\R^N_+$.
It will used in the proof of Proposition \ref{prop_van} for $N = 5$ and $6$.
\begin{lemma}
For $N \ge 5$, it holds that $\Xi \in D^{1,2}(\R^N_+)$ and
\begin{equation}\label{eq_mp}
\int_{\R^N_+} |\nabla \Xi|^2 dx - N \int_{\R^n} w_{1,0}^{2 \over N-2} \Xi^2 d\bx \ge 0.
\end{equation}
\end{lemma}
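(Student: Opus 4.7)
My approach is to combine decay estimates with a second-variation analysis for the sharp Sobolev trace inequality on $\R^N_+$. For the $D^{1,2}$-regularity, taking $\ell = 1$ in \eqref{eq_lin_1} gives $|\nabla \Psi(x)| \le C\ep(1+|x|)^{-(N-2)}$, which is square-integrable on $\R^N_+$ exactly when $2(N-2) > N$, i.e., $N \ge 5$. For $\Phi$, the denominator in \eqref{eq_Phi_n1} satisfies $|\bx|^2 + (x_N+1)^2 \ge 1$ throughout $\overline{\R^N_+}$, so $\Phi$ is smooth there; at infinity one reads off $\Phi(x) = O(|x|^{-(N-3)})$ and $|\nabla \Phi(x)| = O(|x|^{-(N-2)})$, which again yields $\Phi \in D^{1,2}(\R^N_+)$ under the same threshold $N \ge 5$. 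Subtracting gives $\Xi \in D^{1,2}(\R^N_+)$.

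For the inequality itself, I would use that $W_{1,0}$ attains the sharp Sobolev trace constant $S_N$ (Escobar \cite{Es5}): with $p = \frac{2(N-1)}{N-2}$, the function
\[F(t) := \int_{\R^N_+} |\nabla(W_{1,0} + t\Xi)|^2 dx - S_N \(\int_{\R^n} |w_{1,0} + t\Xi(\cdot,0)|^p d\bx\)^{2/p}\]
is non-negative on $\R$ and vanishes at $t = 0$, so $F''(0) \ge 0$. A direct second-derivative computation, combined with the extremality identities $\int_{\R^N_+} |\nabla W_{1,0}|^2 dx = (N-2)\int_{\R^n} w_{1,0}^p d\bx$ and $S_N\bigl(\int_{\R^n} w_{1,0}^p d\bx\bigr)^{2/p - 1} = N-2$, would yield
\[\int_{\R^N_+} |\nabla \Xi|^2 dx - N \int_{\R^n} w_{1,0}^{2/(N-2)} \Xi^2 d\bx \ge -\frac{2}{\int_{\R^n} w_{1,0}^p d\bx} \(\int_{\R^n} w_{1,0}^{N/(N-2)} \Xi\, d\bx\)^2.\]

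To close the argument, the trace integral on the right must vanish. For $\Psi$ this is precisely the condition $\int_{\R^n} w_{1,0}^{N/(N-2)} \Psi\, d\bx = 0$ recorded in \eqref{eq_lin_2}. For $\Phi$, the boundary trace $\Phi(\bx, 0)$ equals $\pi_{ij} x_i x_j$ times a radial function (and $w_{1,0}^{N/(N-2)}$ is itself radial), so passing to polar coordinates $\bx = r\theta$ collapses the angular factor to $\int_{\S^{n-1}} \pi_{ij}\theta_i\theta_j\, dS_\theta = \frac{|\S^{n-1}|}{n}\,\textnormal{trace}(\pi) = 0$. Hence $\int_{\R^n} w_{1,0}^{N/(N-2)} \Xi\, d\bx = 0$ and \eqref{eq_mp} follows. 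The step I expect to require the most care is executing the $F''(0)$ expansion precisely enough to recover the exact coefficient $\frac{2}{\int_{\R^n} w_{1,0}^p d\bx}$ of the orthogonality correction on the right-hand side, since the target inequality is tight and any looseness here would be fatal.
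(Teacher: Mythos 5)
Your proposal is correct and follows essentially the same route as the paper: both arguments rest on the extremality of $W_{1,0}$ for Escobar's sharp Sobolev trace inequality, a second-variation computation (your direct $F''(0)\ge 0$ is just an unwrapped version of the paper's Nehari-manifold formulation of $\left.\frac{d^2}{d\vep^2}J(W_{1,0}+\vep\Xi)\right|_{\vep=0}\ge 0$), and the orthogonality $\int_{\R^n} w_{1,0}^{N/(N-2)}\Xi\, d\bx = 0$ forced by $\textnormal{trace}(\pi)=0$. The only cosmetic difference is how that orthogonality is obtained — you compute the boundary integral of $\Phi$ via the angular identity $\int_{\S^{n-1}}\pi_{ij}\theta_i\theta_j\,dS_\theta=0$ and invoke \eqref{eq_lin_2} for $\Psi$, whereas the paper derives it by testing $\Xi$ against \eqref{eq_W_eq} and $W_{1,0}$ against \eqref{eq_U} — and both are valid.
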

\begin{proof}
By \eqref{eq_lin_1} and \eqref{eq_Phi_n1}, we readily observe that $\Xi \in D^{1,2}(\R^N_+)$.

Testing $\Xi$ in \eqref{eq_W_eq} and $W_{1,0}$ in \eqref{eq_U} gives
\begin{align*}
(N-2) \int_{\R^n} w_{1,0}^{N \over N-2} \Xi d\bx
&= \int_{\R^N_+} \nabla \Xi \cdot \nabla W_{1,0} dx \\
&= N\int_{\R^n} w_{1,0}^{N \over N-2} \Xi d\bx + \int_{\R^n} q w_{1,0} d\bx
= N\int_{\R^n} w_{1,0}^{N \over N-2} \Xi d\bx
\end{align*}
where the last equality holds owing to the condition that $\text{trace}(\pi) = 0$. Thus
\begin{equation}\label{eq_ortho}
\int_{\R^N_+} \nabla \Xi \cdot \nabla W_{1,0} dx = \int_{\R^n} w_{1,0}^{N \over N-2} \Xi d\bx = 0.
\end{equation}
One can now argue as in the proof of Lemma 4.5 of \cite{DKP} to deduce the validity of \eqref{eq_mp}. Here we provide a more direct proof.

Define the energy functional $J$ of \eqref{eq_W_eq} as
\[J(U) = \frac{1}{2} \int_{\R^N_+} |\nabla U|^2 dx - \frac{(N-2)^2}{2(N-1)} \int_{\R^n} U_+^{2(N-1) \over N-2} d\bx \quad \text{for } U \in D^{1,2}(\R^N_+)\]
and the Nehari manifold $\mcm$ associated with $J$ as
\[\mcm = \left\{U \in D^{1,2}(\R^N_+) \setminus \{0\}: \int_{\R^N_+} |\nabla U|^2 dx = (N-2) \int_{\R^n} U_+^{2(N-1) \over N-2} d\bx \right\}\]
where $U_+ = \max\{U,0\}$. Then $J$ is a functional of class $C^2$, $\mcm$ is a $C^1$-Hilbert manifold and $W_{1,0} \in \mcm$.
Moreover, the tangent space $T_{W_{1,0}} \mcm$ of $\mcm$ at $W_{1,0}$ is
\[T_{W_{1,0}} \mcm = \left\{U \in D^{1,2}(\R^N_+): \int_{\R^N_+} \nabla W_{1,0} \cdot \nabla U dx = (N-1) \int_{\R^n} w_{1,0}^{N \over N-2} U d\bx \right\}.\]
In particular, \eqref{eq_ortho} implies that $\Xi \in T_{W_{1,0}} \mcm$.
By Theorem 1.1 of \cite{Es5}, $W_{1,0}$ is a minimizer of $J$ in $\mcm$. Therefore
\[0 \le \left. \frac{d^2J(W_{1,0}+\vep \Xi)}{d\vep^2} \right|_{\vep = 0} = \int_{\R^N_+} |\nabla \Xi|^2 dx - N \int_{\R^n} w_{1,0}^{2 \over N-2} \Xi^2 d\bx,\]
which is \eqref{eq_mp}.
\end{proof}

\subsection{Refined blow-up analysis}
By using Proposition \ref{prop_lin}, we can analyze the $\ep_m$-order behavior of a sequence $\{U_m\}_{m \in \N}$ of solutions to \eqref{eq_Yamabe_3} near isolated simple blow-up points.
Owing to Corollary \ref{cor_i_blow} (i) and Lemma \ref{lemma_conf}, $y_m \to y_0$ is an isolated blow-up point of
a sequence $\{\wtu_m\}_{m \in \N}$ of solutions to \eqref{eq_Yamabe_30} constructed in Subsection \ref{subsec_blow}, and $M_m = \wtu_m(y_m)$.
\begin{prop}
Suppose that $N \ge 4$ and $y_m \to y_0 \in \pa M$ is an isolated simple blow-up point of $\{U_m\}_{m \in \N}$.
Let $\Psi_m$ be the solution of \eqref{eq_lin} with $\ep = \ep_m$ and $\pi = \pi[\tig_m](y_m)$,
and
\begin{equation}\label{eq_wtv}
\wtv_m(x) = \ep_m^{1 \over p_m-1} \wtu_m\(\ep_m x\) \quad \text{in } B^N_+(0, \rho_4 \ep_m^{-1}).
\end{equation}
Then there exists $C > 0$ and $\rho_5 \in (0, \rho_4]$ independent of $m \in \N$ such that
\begin{equation}\label{eq_V_m_est}
\left| \nabla_{\bx}^{\ell} \wtv_m - \nabla_{\bx}^{\ell}(W_{1,0} + \Psi_m) \right|(x) \le {C \ep_m^2 \over 1+|x|^{N-4+\ell}}
\quad \text{in } B^N_+(0,\rho_5 \ep_m^{-1})
\end{equation}
for $\ell = 0, 1, 2$.
\end{prop}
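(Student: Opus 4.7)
\medskip

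\textbf{Proof proposal.}
The plan is to set $\eta_m(x) = \wtv_m(x) - W_{1,0}(x) - \Psi_m(x)$ on $B^N_+(0, \rho_4 \ep_m^{-1})$ and to show pointwise decay for $\eta_m$ and its tangential derivatives via a contradiction/rescaling argument combined with the a priori bounds of Proposition \ref{prop_iso}. First I would derive the equation for $\wtv_m$: writing \eqref{eq_Yamabe_30} in $\tig_m$-Fermi coordinates centered at $y_m$ and rescaling by $\ep_m$, one obtains
\begin{equation*}
-\Delta \wtv_m = (g_m^{ab}-\delta^{ab})\pa_{ab}\wtv_m - \ep_m (\pa_a g_m^{ab})\pa_b \wtv_m - c_N \ep_m^2 R[\tig_m^{(\ep_m)}] \wtv_m \quad \text{in the interior},
\end{equation*}
together with the boundary condition $-\pa_N \wtv_m = (N-2)\tif_m^{-\delta_m} \wtv_m^{p_m}$ on the bottom, where $g_m^{(\ep_m)}(x) = \tig_m(\ep_m x)$ and the bulk mean-curvature term vanishes at the basepoint by Lemma \ref{lemma_conf}. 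Expanding $g_m^{ab}$ by Lemma \ref{lemma_metric} and using \eqref{eq_conf}, the leading correction is exactly $2\ep_m \pi_{ij}[\tig_m](y_m)x_N \pa_{ij}\wtv_m$, while all further terms in the metric expansion, as well as $\ep_m^2 R[\tig_m^{(\ep_m)}]\wtv_m$, are of size $O(\ep_m^2|x|^{2})$ times the second derivatives of $\wtv_m$ (or similar).

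Next I would subtract the equation \eqref{eq_W_eq} for $W_{1,0}$ and the equation \eqref{eq_lin} for $\Psi_m$. The bubble term $2\ep_m \pi_{ij}x_N \pa_{ij}W_{1,0}$ is precisely annihilated by $\Psi_m$, so $\eta_m$ satisfies
\begin{equation*}
\begin{cases}
-\Delta \eta_m = F_m & \text{in } B^N_+(0,\rho_4\ep_m^{-1}),\\[2pt]
-\pa_N \eta_m = N w_{1,0}^{2/(N-2)} \eta_m + Q_m & \text{on } B^n(0,\rho_4\ep_m^{-1}),
\end{cases}
\end{equation*}
where $F_m$ packages the quadratic-and-higher metric corrections (applied to $\wtv_m$), the scalar curvature term, and the residual $2\ep_m\pi_{ij}x_N\pa_{ij}(\wtv_m-W_{1,0})$; and $Q_m$ packages the Taylor remainder of the boundary nonlinearity around $W_{1,0}$ together with the $(\tif_m^{-\delta_m}-1)$ factor. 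Using Proposition \ref{prop_iso} to get $|\nabla^\ell \wtv_m(x)| \le C(1+|x|)^{-(N-2+\ell)}$ and the bounds \eqref{eq_lin_1} on $\Psi_m$, one checks that
\begin{equation*}
|F_m(x)| \le \frac{C\ep_m^2}{1+|x|^{N-2}}, \qquad |Q_m(\bx)| \le \frac{C\ep_m^2}{1+|\bx|^{N-1}}\|\eta_m\|_* + \text{controlled terms}.
\end{equation*}

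The core of the argument is then a contradiction/rescaling step in the spirit of Marques \cite{Ma} and Khuri--Marques--Schoen \cite{KMS}. Suppose $\Lambda_m := \sup_{x \in B^N_+(0,\rho_5\ep_m^{-1})}\ep_m^{-2}(1+|x|^{N-4})|\eta_m(x)| \to \infty$; normalize $\tih_m = \eta_m/(\ep_m^2\Lambda_m)$ and pass to a subsequence. Using the equation for $\eta_m$, elliptic and boundary Schauder estimates together with the decay of $F_m$ and $Q_m$, and the fact that $y_m$ is a maximum of $U_m$ on $\pa M$ (which, combined with $\Psi_m(0)=\nabla \Psi_m(0)=0$ from \eqref{eq_lin_2}, yields appropriate normalization conditions at the origin), one extracts a nontrivial limit $\tih_\infty$ solving
\begin{equation*}
-\Delta \tih_\infty = 0 \text{ in } \R^N_+, \qquad -\pa_N \tih_\infty = N w_{1,0}^{2/(N-2)}\tih_\infty \text{ on } \R^n,
\end{equation*}
with $|\tih_\infty(x)| \le C(1+|x|)^{-(N-4)}$. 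By the classification in Subsection \ref{subsec_bubble} (D\'avila--del Pino--Sire), $\tih_\infty$ is a linear combination of $Z^0_{1,0},\ldots,Z^n_{1,0}$; the normalization conditions inherited at $0$ then force $\tih_\infty \equiv 0$, contradicting $\sup |\tih_m| = 1$. Tangential-derivative estimates ($\ell=1,2$) follow by differentiating the equation in $\bx$ and repeating the same scheme, or more quickly by standard interior/boundary Schauder estimates applied to $\eta_m$ on balls of radius comparable to $|x|$.

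The main obstacle I anticipate is two-fold: (i) the outer region where $|x|$ is comparable to $\rho_5\ep_m^{-1}$, where the metric-expansion remainder grows in $|x|$ and competes with the decay factor $(1+|x|)^{-(N-2)}$ of $\wtv_m$ and the $(1+|x|)^{-(N-3)}$ decay of $\Psi_m$ from \eqref{eq_lin_1}; one must use the global pointwise comparison $M_m\wtu_m \gtrsim G_m$ from Proposition \ref{prop_iso}, the Green's-function decay, and the isolated-simple assumption (via Corollary \ref{cor_i_blow}(2)) to close the outer-region estimates. (ii) Pinning down the correct normalization $\tih_\infty(0)=0$, $\pa_i \tih_\infty(0)=0$ — this requires reading off from \eqref{eq_lin_2} and the maximum condition on $\wtv_m$ that the lower-order coefficients in the $(Z^0,Z^i)$-expansion of $\tih_\infty$ all vanish, and is where the specific properties of $\Psi_m$ selected in Proposition \ref{prop_lin} become essential.
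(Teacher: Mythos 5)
Your proposal follows essentially the same route as the proof the paper relies on (the paper itself defers to Proposition 6.1 of Almaraz \cite{Al} and Proposition 4.2 of \cite{KMW}): the weighted-sup-norm contradiction/rescaling scheme, classification of the limiting linearized solution via the D\'avila--del Pino--Sire nondegeneracy result, and the normalization conditions \eqref{eq_lin_2} together with the criticality of $0$ for $\wtv_m|_{\pa\R^N_+}$ to rule out the kernel elements. The only substantive step left implicit is the preliminary first-order estimate $|\nabla_{\bx}^{\ell}(\wtv_m - W_{1,0})|(x) \le C\ep_m(1+|x|)^{3-N-\ell}$, without which your claimed bound $|F_m| \le C\ep_m^2(1+|x|)^{2-N}$ on the residual term $2\ep_m\pi_{ij}x_N\pa_{ij}(\wtv_m-W_{1,0})$ does not follow; in the cited references this is supplied by running the identical scheme once at one order lower before introducing $\Psi_m$.
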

\noindent For $N \ge 5$, the proposition was proved in Proposition 6.1 of \cite{Al} and Proposition 4.2 of \cite{KMW}.
Also, a slight modification of the arguments in \cite{Al, KMW} shows that it also holds for $N = 4$.
Check Proposition 5.3 of \cite{AdQW} where its 3-dimensional version was derived.

\section{Quantitative analysis on the trace-free second fundamental form}\label{sec_van}
\subsection{Vanishing theorem of the trace-free second fundamental form}
In the next proposition, we prove that the trace-free second fundamental form must vanish at each isolated simple blow-up point of blowing-up solutions when $N = 4, 5, 6$.
An analogous result for $N \ge 7$ can be found in Theorem 7.1 of \cite{Al}.
\begin{prop}\label{prop_van}
Suppose that $N = 4, 5, 6$ and $y_m \to y_0 \in \pa M$ is an isolated simple blow-up point of the sequence $\{U_m\}_{m \in \N}$ of the solutions to \eqref{eq_Yamabe_3}.
If $\{\tig_m\}_{m \in \N}$ is a sequence of the metrics constructed in Subsection \ref{subsec_blow},
then there exists $C > 0$ independent of $m \in \N$ such that
\begin{equation}\label{eq_van}
\|\pi[\tig_m](y_m)\|^2 \le \begin{cases}
\dfrac{C}{|\log \ep_m|} &\text{for } N = 4,\\
C \ep_m|\log \ep_m| &\text{for } N = 5,\\
C \ep_m &\text{for } N = 6.
\end{cases}
\end{equation}
Particularly, $\pi[\tig_0](y_0) = 0$.
\end{prop}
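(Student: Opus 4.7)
The approach is to apply the local Pohozaev identity \eqref{eq_poho_2} to $\wtu_m$ on a half-ball $B^N_+(0,\rho)$ of fixed small radius $\rho \in (0,\rho_5)$ in $\tig_m$-Fermi coordinates centered at $y_m$, and to extract from its $\ep_m$-expansion a quantitative lower bound on the bulk integral in terms of $\|\pi[\tig_m](y_m)\|^2$. The conformal normalization in Lemma \ref{lemma_conf} forces $H[\tig_m]$, $H_{,i}[\tig_m]$ and $R_{ij}[h_m]$ to vanish at $y_m$, so the mean-curvature contribution on the RHS of \eqref{eq_poho_2} is subleading. The inhomogeneity $Q$ obtained by writing the conformal Laplacian in local coordinates as $-\Delta \wtu_m = Q$ has, by Lemma \ref{lemma_metric}, leading part $2\pi_{ij} x_N \pa_{ij}\wtu_m$, precisely the source of the linear problem \eqref{eq_lin} for $\Psi_m$; subleading pieces come from $\nabla\pi$, the Riemann tensor on $\pa M$, and the conformal scalar-curvature term $\frac{N-2}{4(N-1)}R[\tig_m]\wtu_m$.

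I would then insert the refined expansion
\[
\wtu_m(x) = M_m^{-1}\Bigl[W_{1,0}(x/\ep_m) + \Psi_m(x/\ep_m) + O\bigl(\ep_m^2 (1+|x/\ep_m|)^{-(N-4)}\bigr)\Bigr]
\]
from \eqref{eq_V_m_est} and rescale by $y = x/\ep_m$. The LHS $\mcp(\wtu_m,\rho)$, multiplied by $M_m^2$, stays $O(1)$ uniformly in $m$ thanks to \eqref{eq_U_m_est}. The dominant bulk term on the RHS becomes, after rescaling,
\[
c_N \ep_m^{N-1} M_m^{-2} \int_{B^N_+(0,\rho\ep_m^{-1})} \pi_{ij} y_N \pa_{ij}(W_{1,0}+\Psi_m)\Bigl[y_a\pa_a(W_{1,0}+\Psi_m)+\tfrac{N-2}{2}(W_{1,0}+\Psi_m)\Bigr] dy,
\]
together with explicit remainders coming from the subleading parts of $Q$ and from the $O(\ep_m^2)$ error in $\wtu_m$.

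To process the leading integral I would split $\Psi_m = \Phi_m + \Xi_m$ as in Lemmas \ref{lemma_Phi}--\ref{lemma_U}. The $W_{1,0}$--$W_{1,0}$ self-interaction contributes a clean positive multiple of $\|\pi[\tig_m](y_m)\|^2$ after using the rotational symmetry of $W_{1,0}$ and $\text{trace}(\pi) = 0$. The cross terms involving the rational piece $\Phi_m$ are explicit integrals over $\R^N_+$ in which the two free parameters $a_1,a_2$ can be tuned to kill non-coercive contributions; having two degrees of freedom rather than the single one available in Marques' construction (cf.\ the remark following Lemma \ref{lemma_Phi}) is what makes this possible. The cross terms with $\Xi_m$, combined with the quadratic $\Xi_m$ piece, assemble into the non-negative quadratic form $\int_{\R^N_+}|\nabla\Xi_m|^2 - N\int_{\R^n} w_{1,0}^{2/(N-2)} \Xi_m^2 \ge 0$ provided by \eqref{eq_mp}, together with the orthogonality \eqref{eq_ortho}. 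Combining everything, I expect to arrive at an inequality of the schematic form $c_N \|\pi[\tig_m](y_m)\|^2\, J_N(\ep_m) \le C$, where $J_N(\ep_m)$ is precisely the reciprocal of the RHS of \eqref{eq_van}; the limit $m \to \infty$ with $\tig_m \to \tig_0$ in $C^2$ then yields $\pi[\tig_0](y_0) = 0$.

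The main obstacle is identifying in each dimension the correct power of $\ep_m$ at which $\|\pi[\tig_m](y_m)\|^2$ actually enters. In $N=4$ the self-interaction integrand behaves radially like $|y|^{2-N} = |y|^{-2}$, so integrating against the dilated domain $B^N_+(0,\rho\ep_m^{-1})$ produces the gain $|\log \ep_m|$ that is responsible for the bound $C/|\log \ep_m|$. In $N = 5$ the leading contribution has the wrong homogeneity and vanishes, forcing one to chase the subleading $\nabla\pi$ and Riemann-tensor pieces of the metric expansion in Lemma \ref{lemma_metric}: this is precisely the borderline $\ep_m^3|\log \ep_m|$-order expansion highlighted in Remark \ref{rmk_mar}(ii), whose sign must be tracked very carefully. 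In $N = 6$ the refined expansion is only accurate to $O(\ep_m^2|x|^{-2})$ at infinity, and the $\Xi_m$ cross terms---known only implicitly via the boundary data $q$ in \eqref{eq_q}---must be shown not to disturb the sign of the resulting inequality. Handling these dimension-specific error terms without losing the sign of the coercive piece is, in my view, the principal technical difficulty.
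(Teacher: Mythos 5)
Your overall framework matches the paper's: apply the local Pohozaev identity in rescaled coordinates, use the refined expansion \eqref{eq_V_m_est} to reduce the bulk term to the bilinear form $F_m$ evaluated on $W_{1,0}$ and $\Psi_m$, split $\Psi_m=\Phi_m+\Xi_m$, control the $\Xi_m$-contributions via the Nehari-manifold inequality \eqref{eq_mp} and the orthogonality \eqref{eq_ortho}, and tune the free parameters in $\Phi_m$. However, there is a genuine gap in the mechanism you propose for extracting a coercive multiple of $\|\pi_m\|^2$. You assert that the $W_{1,0}$--$W_{1,0}$ self-interaction ``contributes a clean positive multiple of $\|\pi[\tig_m](y_m)\|^2$'' and that the correction terms merely need not spoil the sign. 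This is backwards: in the paper, $F_m(W_{1,0},W_{1,0})=-\frac{1}{64}\left|\S^3\right|\ep_m^2\|\pi_m\|^2+O(\ep_m^3|\log\ep_m|)$ for $N=5$ and $-\frac{\pi}{24}\left|\S^2\right|\ep_m^2\log(\rho\ep_m^{-1})\|\pi_m\|^2+O(\ep_m^2)$ for $N=4$ --- the self-interaction is \emph{negative} --- while for $N=6$ it is $O(\ep_m^3)$, hence invisible at the relevant order. The entire positivity must therefore be manufactured from the cross terms $F_m(W_{1,0},\Psi_m)+F_m(\Psi_m,W_{1,0})$, whose explicit $\Phi_m$-part is maximized over $(a_1,a_2)$ (yielding, e.g., $P(-\frac{63}{4},\frac{105}{8})=\frac{3}{2560}>0$ for $N=5$) so as to overcompensate the negative self-interaction. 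Without identifying this cancellation-and-overcompensation structure, the argument does not close.

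Two further points. For $N=4$ the parameters $a_1,a_2$ are useless: their contributions are $O(\ep_m^2)$ while the leading order is $\ep_m^2|\log\ep_m|$, and at $a_1=a_2=0$ the self-interaction and the cross term cancel \emph{exactly} at that order ($-\frac{\pi}{24}+\frac{\pi}{24}=0$). The paper escapes this degeneracy by replacing $\Phi$ with the perturbed profile $\Phi_\delta$ of \eqref{eq_Phi_delta}, gaining the strictly positive coefficient $\frac{64}{105}\delta+O(\delta^2)$; your proposal contains no substitute for this step. Finally, you attribute the $N=5$ bound to a ``borderline $\ep_m^3|\log\ep_m|$-order expansion'': in fact \eqref{eq_van} for $N=5$ is read off from the $\ep_m^2$-order, and the $\ep_m^3|\log\ep_m|$-order is the subject of the separate Proposition \ref{prop_nonneg} on $\pi[\tig_0]_{ij,ij}(y_0)\ge 0$, which is not needed for the vanishing theorem itself.
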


Let $\{\wtu_m\}_{m \in \N}$ be a sequence of solutions to \eqref{eq_Yamabe_30} depicted in Subsection \ref{subsec_blow}.
By appealing $\tig_m$-Fermi coordinates on $M$ centered at $y_m$, we regard $\wtu_m$ as a function defined near $0 \in \R^N_+$.
For brevity, we write $\pi_m = \pi[\tig_m](y_m)$ for all $m \in \N$.

\medskip
Denoting $\hg_m = \tig_m(\ep_m \cdot)$ and $\hf_m = \tif_m(\ep_m \cdot)$,
we see from \eqref{eq_Yamabe_30} that the function $\wtv_m$ introduced in \eqref{eq_wtv} solves
\[\begin{cases}
- \Delta \wtv_m = - \left[\dfrac{N-2}{4(N-1)}\right] \ep_m^2 R[\tig_m](\ep_m \cdot) \wtv_m + (\Delta_{\hg_m} - \Delta) \wtv_m &\text{in } B^N_+(0,\rho_5 \ep_m^{-1}),\\
- \dfrac{\pa \wtv_m}{\pa x_N} + \left[\dfrac{N-2}{2}\right] \ep_m H[\tig_m](\ep_m \cdot) \wtv_m = (N-2) \hf_m^{-\delta_m} \wtv_m^{p_m} &\text{on } B^n(0,\rho_5 \ep_m^{-1}).
\end{cases}\]
Thus, employing Pohozaev's identity \eqref{eq_poho_2}, one can write
\begin{equation}\label{eq_poho_3}
\mcp\(\wtv_m, \rho\ep_m^{-1}\) = \mcp_{1m}\(\wtv_m, \rho\ep_m^{-1}\) + {\delta_m \over p_m+1} \mcp_{2m}\(\wtv_m, \rho\ep_m^{-1}\) \quad \text{for any } \rho \in (0, \rho_5]
\end{equation}
where $\mcp$ is the function defined in \eqref{eq_poho_1} with $f = (N-2)\hf_m^{-\delta_m}$,
\begin{align}
&\ \mcp_{1m}(U, \rho) \nonumber \\
&= \int_{B^N_+(0,\rho)} \left[\left\{ \dfrac{N-2}{4(N-1)}\right\} \ep_m^2 R[\tig_m](\ep_m \cdot) U + (\Delta - \Delta_{\hg_m})U \right] \cdot \left[x_a \pa_a U + \({N-2 \over 2}\) U \right] dx \nonumber \\
&\ + \(\frac{N-2}{2}\) \ep_m \int_{B^n(0,\rho)} H[\tig_m](\ep_m \cdot) \left[x_i \pa_i U + \({N-2 \over 2}\) U \right] U d\bx \label{eq_mcp_1}
\end{align}
and
\[\mcp_{2m}(U, \rho) = - \int_{B^n(0,\rho)} x_i \pa_i\hf_m \hf_m^{-(\delta_m+1)} U^{p_m+1} d\bx + \(\frac{N-2}{2}\) \int_{B^n(0,\rho)} \hf_m^{-\delta_m} U^{p_m+1} d\bx.\]

\medskip
The left-hand side of \eqref{eq_poho_3} involves with the boundary integrals only. By \eqref{eq_U_m_est}, \eqref{eq_M_m_est} and \eqref{eq_wtv}, 
there exists a constant $C > 0$ independent of $m \in \N$ and $\rho \in (0,\rho_5]$ such that
\begin{equation}\label{eq_poho_31}
\mcp\(\wtv_m, \rho\ep_m^{-1}\) = O(\ep_m^{N-2}).
\end{equation}

The right-hand side of \eqref{eq_poho_3} involves with the interior integrals.
We can take $\rho$ so small that
\begin{equation}\label{eq_poho_32}
\mcp_{2m}\(\wtv_m, \rho\ep_m^{-1}\) \ge 0.
\end{equation}
Also, choosing $\kappa \ge 2$ in \eqref{eq_kappa}, we may assume that the second integral in the right-hand side of \eqref{eq_mcp_1} is bounded by
\begin{align*}
&\ \ep_m \int_{B^n(0,\rho\ep_m^{-1})} \left| H[\tig_m](\ep_m \bx)\right| \left|x_i \pa_i \wtv_m + \({N-2 \over 2}\) \wtv_m \right| \left|\wtv_m\right| d\bx \\
&\le C \ep_m^{\kappa+1} \int_{B^n(0,\rho\ep_m^{-1})} \frac{|\bx|^{\kappa}}{1+|\bx|^{2(N-2)}} d\bx
= O(\ep_m^3) + O(\ep_m^{N-2});
\end{align*}
see the derivation of \eqref{eq_H} below. Hence, by fixing $\rho$ small enough and invoking \eqref{eq_V_m_est}, we get
\begin{multline}\label{eq_mt}
\mcp_{1m}\(\wtv_m, \rho\ep_m^{-1}\) \\
= F_m(W_{1,0}, W_{1,0}) + \left[ F_m(W_{1,0}, \Psi_m) + F_m(\Psi_m, W_{1,0}) \right] + \begin{cases}
O(\ep_m^2) &\text{for } N = 4,\\
O(\ep_m^3 |\log \ep_m|) &\text{for } N = 5,\\
O(\ep_m^3) &\text{for } N \ge 6
\end{cases}
\end{multline}
where
\begin{multline}\label{eq_Fm}
F_m(V_1, V_2) = \int_{B^N_+(0,\rho\ep_m^{-1})} \left[\left\{{N-2 \over 4(N-1)}\right\} \ep_m^2 R[\tig_m](\ep_m x) V_1 + (\Delta-\Delta_{\hg_m}) V_1 \right] \\
\times \left[ x \cdot \nabla V_2 + \({N-2 \over 2}\) V_2 \right] dx
\end{multline}
and $\Psi_m$ is the solution of \eqref{eq_lin} with $\ep = \ep_m$ and $\pi = \pi_m$.
To estimate \eqref{eq_mt}, we divide the cases according to the dimension $N$.
We examine the case $N = 5$ first, $N = 6$ second, and $N = 4$ at last.

\medskip \noindent
\textsc{Case $N = 5$:} By putting $n = 4$ and $\gamma = \frac{1}{2}$ in (5.9) of \cite{KMW}, one can compute that
\begin{equation}\label{eq_FW}
F_m(W_{1,0}, W_{1,0}) = C_1 \ep_m^2 \|\pi_m\|^2 + O(\ep_m^3 |\log \ep_m|)
\end{equation}
where
\begin{align*}
C_1 = -\frac{1}{8} \int_{\R^5_+} x_5^2 |\nabla_{\bx} W_{1,0}|^2 dx
&= - \frac{9}{8} \left|\S^3\right| \int_0^{\infty} \frac{x_5^2 dx_5}{(x_5+1)^4} \int_0^{\infty} \frac{t^5 dt}{(t^2+1)^5} \\
&= - \frac{9}{8} \left|\S^3\right| \cdot \frac{1}{3} \cdot \frac{1}{24} = - \frac{1}{64} \left|\S^3\right|.
\end{align*}
Besides, it was shown in (5.10) of \cite{KMW} that
\[F_m(W_{1,0}, \Psi_m) + F_m(\Psi_m, W_{1,0}) \ge O(\ep_m^3 |\log \ep_m|).\]
However, it is not enough to deduce the proposition because $C_1 < 0$. We will improve the estimate in the next result.
\begin{lemma}\label{lemma_FW}
It holds that
\begin{align}
&\ F_m(W_{1,0}, \Psi_m) + F_m(\Psi_m, W_{1,0}) \label{eq_FW_2}\\
&\ge \left|\S^3\right| \( -\frac{1}{128} + \frac{a_1}{480} - \frac{11 a_1^2}{60480}
+ \frac{a_2}{160} - \frac{a_1 a_2}{1680} - \frac{a_2^2}{1680}\) \ep_m^2 \|\pi_m\|^2 + O(\ep_m^3 |\log \ep_m| \cdot \|\pi_m\|). \nonumber
\end{align}
\end{lemma}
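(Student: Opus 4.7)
The plan is to use the decomposition $\Psi_m = \Phi_m + \Xi_m$ from Lemmas \ref{lemma_Phi}--\ref{lemma_U}: the rational piece $\Phi_m$ has contributions computable by hand (yielding the polynomial in $a_1, a_2$), while the harmonic piece $\Xi_m$ will be controlled via the variational inequality \eqref{eq_mp}.

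First I would expand $\Delta - \Delta_{\hg_m}$ in powers of $\ep_m$. Since $\hg_m(x) = \tig_m(\ep_m x)$ and the conformal coordinates of Lemma \ref{lemma_conf} make the mean curvature vanish so that $\tni[\tig_m](y_m) = \pi_m$, Lemma \ref{lemma_metric} gives $(\Delta - \Delta_{\hg_m}) V = -2\ep_m \pi_{ij} x_N \pa_{ij} V + O(\ep_m^2 |x|^2 |\nabla^2 V|) + O(\ep_m^2 |x| |\nabla V|)$. Applied to $W_{1,0}$ this equals $\Delta \Psi_m + O(\ep_m^2)$ by the equation satisfied by $\Psi_m$, while applied to $\Psi_m$ itself it is of size $O(\ep_m^2)$. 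Plugging these into \eqref{eq_Fm}, noting that the scalar-curvature term always carries a spare $\ep_m^2$, and using the pointwise bound \eqref{eq_lin_1}, I reduce $F_m(W_{1,0}, \Psi_m) + F_m(\Psi_m, W_{1,0})$ modulo $O(\ep_m^3 |\log \ep_m| \cdot \|\pi_m\|)$ to two explicit integrals: one pairing $\Delta \Psi_m$ with the dilation generator $x \cdot \nabla \Psi_m + \tfrac{N-2}{2} \Psi_m$, and one pairing $-2\ep_m \pi_{ij} x_N \pa_{ij} \Psi_m$ with $x \cdot \nabla W_{1,0} + \tfrac{N-2}{2} W_{1,0}$.

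Next I substitute $\Psi_m = \Phi_m + \Xi_m$ into these reduced integrals and split into $(\Phi_m,\Phi_m)$, cross, and $(\Xi_m,\Xi_m)$ pieces. The $(\Phi_m, \Phi_m)$ contribution consists of finitely many integrals of explicit rational functions on $\R^5_+$; after an angular average that converts $\pi_{ij}\pi_{kl}$-contractions into $\|\pi_m\|^2$ via the trace-free identity $\int_{\S^{n-1}}\theta_i\theta_j\theta_k\theta_l\,d\theta = \frac{|\S^{n-1}|}{n(n+2)}(\delta_{ij}\delta_{kl}+\delta_{ik}\delta_{jl}+\delta_{il}\delta_{jk})$ and a reduction to beta integrals in $r=|\bx|$ and $x_N$, one obtains precisely the polynomial $-\tfrac{1}{128}+\tfrac{a_1}{480}-\tfrac{11 a_1^2}{60480}+\tfrac{a_2}{160}-\tfrac{a_1 a_2}{1680}-\tfrac{a_2^2}{1680}$ multiplied by $|\S^3|\,\ep_m^2\|\pi_m\|^2$. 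The cross $(\Phi_m,\Xi_m)$ and $(\Xi_m,\Phi_m)$ terms reduce, upon integrating by parts and using that $\Xi_m$ is harmonic with Neumann data \eqref{eq_q}, to a boundary integral of the form $\int_{\R^n} q_m \Xi_m\, d\bx$; the orthogonality $\int w_{1,0}^{N/(N-2)}\Xi_m\, d\bx = 0$ from \eqref{eq_ortho} kills the $\pi$-independent part and recasts the remainder as a multiple of $\int w_{1,0}^{2/(N-2)}\Xi_m^2\, d\bx$.

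Finally, the $(\Xi_m,\Xi_m)$-terms together with the quadratic remainder combine into $\ep_m^2\|\pi_m\|^2$ times $\int_{\R^N_+}|\nabla\Xi_m|^2\,dx - N \int_{\R^n}w_{1,0}^{2/(N-2)}\Xi_m^2\, d\bx$, which is non-negative by \eqref{eq_mp} and may be dropped to obtain a lower bound. The main obstacle is the bookkeeping of the rational integrals in the $(\Phi_m,\Phi_m)$ step: since $\Phi_m$ is a sum of three terms with coefficients $1$, $a_1$, $a_2$, the expansion produces six cross-products, each reducing to a standard beta integral $\int_0^\infty r^a(r^2+(x_N+1)^2)^{-b}dr$ followed by a one-dimensional integral in $x_N$, and getting the specific denominators $480$, $60480$, $160$, $1680$ correct is delicate. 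This dimension-specific accounting is what forces the separate treatment of $N=5$; the parallel computations for $N=4,6$ require a different balance between polynomial decay and integration volume and will be carried out in the next sections.
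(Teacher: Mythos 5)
Your overall strategy is the paper's: reduce $F_m(W_{1,0},\Psi_m)+F_m(\Psi_m,W_{1,0})$ to the Pohozaev pairing $2\ep_m(\pi_m)_{ij}\int_{\R^5_+}x_5\,\pa_{ij}W_{1,0}\,\Psi_m\,dx$, split $\Psi_m=\Phi_m+\Xi_m$, compute the explicit rational integrals, and discard the non-negative quadratic form $\int|\nabla\Xi_m|^2-5\int w_{1,0}^{2/3}\Xi_m^2\ge 0$ from \eqref{eq_mp}. But your bookkeeping of where the polynomial in $(a_1,a_2)$ comes from is structurally wrong, and as written the argument cannot produce the quadratic terms $-\tfrac{11a_1^2}{60480}-\tfrac{a_1a_2}{1680}-\tfrac{a_2^2}{1680}$. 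The ``$(\Phi_m,\Phi_m)$'' pieces are necessarily \emph{affine} in $(a_1,a_2)$: after the Pohozaev reduction they collapse to $2\ep_m(\pi_m)_{ij}\int x_5\,\pa_{ij}W_{1,0}\,\Phi_m\,dx$, in which the factor $\pa_{ij}W_{1,0}$ is independent of $(a_1,a_2)$ and $\Phi_m$ is affine in them; this integral equals $|\S^3|(\tfrac{1}{64}+\tfrac{a_1}{3360}+\tfrac{a_2}{960})\ep_m^2\|\pi_m\|^2$, not the full polynomial you claim.

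The missing quadratic terms live in the cross term, and your treatment of it fails. Testing $\Phi_m$ in \eqref{eq_U} gives $\int\nabla\Phi_m\cdot\nabla\Xi_m-5\int w_{1,0}^{2/3}\Phi_m\Xi_m=\int_{\R^4}q_m\,\Phi_m\,d\bx$ --- the Neumann datum $q_m$ paired with $\Phi_m$, \emph{not} with $\Xi_m$. Since $q_m$ and $\Phi_m$ are each affine in $(a_1,a_2)$, the explicit integral $\int q_m\Phi_m$ is quadratic in $(a_1,a_2)$ and is precisely the source of $-\tfrac{3}{128}+\tfrac{a_1}{560}-\tfrac{11a_1^2}{60480}+\tfrac{a_2}{192}-\tfrac{a_1a_2}{1680}-\tfrac{a_2^2}{1680}$; without it the later optimization over $(a_1,a_2)$ in Corollary \ref{cor_FW} has nothing to optimize. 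Your version, $\int q_m\Xi_m\,d\bx$, is not explicit (no closed form for $\Xi_m$ is available), the orthogonality $\int w_{1,0}^{5/3}\Xi_m=0$ is irrelevant to it ($q_m$ is entirely proportional to $\pi_{ij}x_ix_j$ and has no $\pi$-independent part), and a quantity linear in $\Xi_m$ cannot be ``recast as a multiple of $\int w_{1,0}^{2/3}\Xi_m^2$''. The correct accounting is: cross term $=\int q_m\Phi_m$ (compute explicitly), pure $\Xi_m$ term $=\int|\nabla\Xi_m|^2-5\int w_{1,0}^{2/3}\Xi_m^2\ge 0$ (drop), and only the sum of the two explicit integrals yields the stated polynomial.
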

\begin{proof}[Proof of Lemma \ref{lemma_FW}]
We see from Derivation of (5.10) of \cite{KMW} that
\begin{equation} \label{eq_mt_22}
\begin{aligned}
&\ F_m(W_{1,0}, \Psi_m) + F_m(\Psi_m, W_{1,0}) \\
&= -2\ep_m (\pi_m)_{ij} \left[ \int_{\R^5_+} x_5 \pa_{ij}W_{1,0} \(x \cdot \nabla \Psi_m + {3 \over 2} \Psi_m\) dx + \int_{\R^5_+} x_5 \pa_{ij}\Psi_m Z_{1,0}^0 dx \right] \\
&\ + O(\ep_m^3 |\log \ep_m| \cdot \|\pi_m\|) \\
&= - 2\ep_m (\pi_m)_{ij} \int_{\R^5_+} x_5 \pa_i W_{1,0} \pa_j \Psi_m dx + O(\ep_m^3 |\log \ep_m| \cdot \|\pi_m\|) \\
&= 2\ep_m (\pi_m)_{ij} \( \int_{\R^5_+} x_5 \pa_{ij} W_{1,0} \Phi_m dx + \int_{\R^5_+} x_5 \pa_{ij} W_{1,0} \Xi_m dx \) + O(\ep_m^3 |\log \ep_m| \cdot \|\pi_m\|)
\end{aligned}
\end{equation}
where $\Phi_m$ and $\Xi_m$ are defined by \eqref{eq_Phi_n1} and \eqref{eq_U} with $\ep = \ep_m$ and $\pi = \pi_m$, and so $\Psi_m = \Phi_m + \Xi_m$.

On the other hand, by testing $\Xi_m$ in \eqref{eq_lin}, we obtain
\begin{align*}
&\ 2 \ep_m (\pi_m)_{ij} \int_{\R^5_+} x_5 \pa_{ij} W_{1,0} \Xi_m dx \\
&= \int_{\R^5_+} \nabla \Psi_m \cdot \nabla \Xi_m dx - 5\int_{\R^4} w_{1,0}^{2 \over 3} \Psi_m \Xi_m d\bx \\
&= \int_{\R^5_+} \nabla \Phi_m \cdot \nabla \Xi_m dx - 5\int_{\R^4} w_{1,0}^{2 \over 3} \Phi_m \Xi_m d\bx
+ \int_{\R^5_+} |\nabla \Xi_m|^2 dx - 5\int_{\R^4} w_{1,0}^{2 \over 3} \Xi_m^2 d\bx.
\end{align*}
Testing $\Phi_m$ in \eqref{eq_U}, we find
\[\int_{\R^5_+} \nabla \Xi_m \cdot \nabla \Phi_m dx = 5\int_{\R^4} w_{1,0}^{2 \over 3} \Xi_m \Phi_m d\bx + \int_{\R^4} q_m \Phi_m d\bx\]
where $q_m$ is the function defined by \eqref{eq_q} with $\ep = \ep_m$ and $\pi = \pi_m$.
Thus it follows from \eqref{eq_mp} that
\begin{equation}\label{eq_mt_23}
2\ep_m (\pi_m)_{ij} \int_{\R^5_+} x_5 \pa_{ij} W_{1,0} \Xi_m dx
\ge \int_{\R^4} q_m \Phi_m d\bx.
\end{equation}

Combining \eqref{eq_mt_22} and \eqref{eq_mt_23}, we obtain
\begin{multline}\label{eq_FW_3}
F_m(W_{1,0}, \Psi_m) + F_m(\Psi_m, W_{1,0}) \\
\ge (\pi_m)_{ij} \( 2\ep_m \int_{\R^5_+} x_5 \pa_{ij} W_{1,0} \Phi_m dx + \int_{\R^4} q_m \Phi_m d\bx \) + O(\ep_m^3 |\log \ep_m| \cdot \|\pi_m\|).
\end{multline}

By applying \eqref{eq_Phi_n1}, we evaluate
\begin{align}
&\ 2\ep_m (\pi_m)_{ij} \int_{\R^5_+} x_5 \pa_{ij} W_{1,0} \Phi_m dx \nonumber \\
&= 30\ep_m (\pi_m)_{ij} \int_{\R^5_+} \frac{x_ix_jx_5}{(|\bx|^2 + (x_5+1)^2)^{7 \over 2}} \Phi_m dx \nonumber \\
&= 30 \ep_m^2 (\pi_m)_{ij} (\pi_m)_{kl} \int_{\R^5_+} x_ix_jx_kx_l \left[ \frac{3}{2} \cdot \frac{x_5(x_5-1)}{(|\bx|^2+(x_5+1)^2)^6} \right. \nonumber \\
&\hspace{150pt} \left. + a_1 \frac{x_5(x_5+1)}{(|\bx|^2+(x_5+1)^2)^8} + a_2 \frac{x_5}{(|\bx|^2+(x_5+1)^2)^7} \right] dx \label{eq_FW_31} \\
&= \left|\S^3\right| \left[ \frac{15}{4} \int_0^{\infty} \frac{x_5(x_5-1) dx_5}{(x_5+1)^4} \int_0^{\infty} \frac{t^7 dt}{(t^2+1)^6}
+ \frac{5a_1}{2} \int_0^{\infty} \frac{x_5 dx_5}{(x_5+1)^7} \int_0^{\infty} \frac{t^7 dt}{(t^2+1)^8} \right. \nonumber \\
&\hspace{178pt} \left. + \frac{5a_2}{2} \int_0^{\infty} \frac{x_5 dx_5}{(x_5+1)^6} \int_0^{\infty} \frac{t^7 dt}{(t^2+1)^7} \right] \ep_m^2 \|\pi_m\|^2 \nonumber \\
&= \left|\S^3\right| \(\frac{1}{64} + \frac{a_1}{3360} + \frac{a_2}{960}\) \ep_m^2 \|\pi_m\|^2 \nonumber
\end{align}
and
\begin{align}
&\ \int_{\R^4} q_m \Phi_m d\bx \nonumber \\
&= \ep_m^2 (\pi_m)_{ij} (\pi_m)_{kl} \int_{\R^4} \frac{x_ix_jx_kx_l}{(|\bx|^2+1)^5}
\left[ \frac{3}{2} + a_1 \left\{ \frac{1}{(|\bx|^2+1)^2}
- \frac{4}{(|\bx|^2+1)^3} \right\} - \frac{2a_2}{(|\bx|^2+1)^2} \right] \nonumber \\
&\hspace{136pt} \times \left[ - \frac{3}{2} + \frac{a_1}{(|\bx|^2 + 1)^2}
+ \frac{a_2}{|\bx|^2 + 1} \right] d\bx \nonumber \\
&= \frac{1}{12} \left|\S^3\right| \int_0^{\infty} \frac{r^7}{(r^2+1)^5}
\left[ \frac{3}{2} + a_1 \left\{ \frac{1}{(r^2+1)^2}
- \frac{4}{(r^2+1)^3} \right\} - \frac{2a_2}{(r^2+1)^2} \right] \label{eq_FW_32} \\
&\hspace{101pt} \times \left[ - \frac{3}{2} + \frac{a_1}{(r^2 + 1)^2}
+ \frac{a_2}{r^2 + 1} \right] d\bx \cdot \ep_m^2 \|\pi_m\|^2 \nonumber \\
&= \left|\S^3\right| \(-\frac{3}{128} + \frac{a_1}{560} - \frac{11 a_1^2}{60480} + \frac{a_2}{192} - \frac{a_1 a_2}{1680} - \frac{a_2^2}{1680} \) \ep_m^2 \|\pi_m\|^2. \nonumber
\end{align}
Putting \eqref{eq_FW_3}-\eqref{eq_FW_32}, we deduce \eqref{eq_FW_2}.
\end{proof}

\begin{cor}\label{cor_FW}
It holds that
\begin{multline}\label{eq_FW_4}
F_m(W_{1,0}, W_{1,0}) + \left[ F_m(W_{1,0}, \Psi_m) + F_m(\Psi_m, W_{1,0}) \right] \\
\ge \frac{3}{2560} \left|\S^3\right| \ep_m^2 \|\pi_m\|^2 + O(\ep_m^3 |\log \ep_m|) + O(\ep_m^3 |\log \ep_m| \cdot \|\pi_m\|).
\end{multline}
\end{cor}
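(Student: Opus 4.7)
The plan is to add the estimate (\ref{eq_FW}) for $F_m(W_{1,0}, W_{1,0})$ to the lower bound for $F_m(W_{1,0}, \Psi_m) + F_m(\Psi_m, W_{1,0})$ furnished by Lemma \ref{lemma_FW}, and then to optimize the resulting coefficient of $\ep_m^2 \|\pi_m\|^2$ over the free parameters $a_1, a_2 \in \R$ that appear in the decomposition $\Psi_m = \Phi_m + \Xi_m$ from Lemmas \ref{lemma_Phi} and \ref{lemma_U}.

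Adding the two estimates produces a lower bound of the shape $|\S^3|\, Q(a_1, a_2)\, \ep_m^2 \|\pi_m\|^2$ modulo the error terms already stated in (\ref{eq_FW}) and Lemma \ref{lemma_FW}, where
\[
Q(a_1, a_2) = -\frac{3}{128} + \frac{a_1}{480} + \frac{a_2}{160} - \frac{11 a_1^2}{60480} - \frac{a_1 a_2}{1680} - \frac{a_2^2}{1680}.
\]
The crucial point I would emphasize is that Lemma \ref{lemma_FW} holds for every choice of $(a_1, a_2) \in \R^2$: the function $\Psi_m$ is fixed by Proposition \ref{prop_lin}, while $\Phi_m = \Phi_m(a_1, a_2)$ is a parametrized family of explicit rational solutions of (\ref{eq_Phi_n}) and $\Xi_m := \Psi_m - \Phi_m$ adjusts accordingly; in particular, the orthogonality (\ref{eq_ortho}) and the mountain-pass inequality (\ref{eq_mp}) used in the proof of Lemma \ref{lemma_FW} remain valid for each parameter value. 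One may therefore freely maximize $Q$ over $\R^2$.

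A short direct check shows that the quadratic part of $Q$ has a negative-definite Hessian (one verifies $\tfrac{44}{60480 \cdot 1680} > \tfrac{1}{1680^2}$), so $Q$ is strictly concave and attains a unique global maximum. Setting $\nabla Q = 0$ and clearing denominators gives the linear system $11 a_1 + 18 a_2 = 63$ and $a_1 + 2 a_2 = \tfrac{21}{2}$, with unique solution $a_1^* = -\tfrac{63}{4}$ and $a_2^* = \tfrac{105}{8}$. Using the standard identity $Q(a^*) = Q(0) + \tfrac{1}{2}\,\nabla Q(0) \cdot a^*$ for the value of a concave quadratic at its critical point, one computes
\[
Q(a_1^*, a_2^*) = -\frac{3}{128} + \frac{1}{2}\left[ \frac{1}{480} \cdot \left(-\frac{63}{4}\right) + \frac{1}{160} \cdot \frac{105}{8}\right] = -\frac{60}{2560} + \frac{63}{2560} = \frac{3}{2560},
\]
which is exactly the coefficient claimed in (\ref{eq_FW_4}).

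There is no serious analytic obstacle here; the content of the corollary is essentially algebraic once Lemma \ref{lemma_FW} is in hand. What I would nevertheless stress is that $Q(0,0) = -\tfrac{3}{128} < 0$, so neither the pure-bubble contribution $F_m(W_{1,0}, W_{1,0})$ alone nor the naive choice $\Phi_m \equiv 0$ gives a useful sign; the positivity of the sharp coefficient $\tfrac{3}{2560}$ emerges only after fully exploiting both degrees of freedom $a_1, a_2$ in the decomposition of $\Psi_m$. This positivity is precisely what the subsequent vanishing theorem for the trace-free second fundamental form on $5$-manifolds (Proposition \ref{prop_van}) will need.
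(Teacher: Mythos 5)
Your proposal is correct and takes essentially the same route as the paper: the paper's proof likewise adds \eqref{eq_FW} to the bound of Lemma \ref{lemma_FW}, obtains the quadratic $P(a_1,a_2)=-\tfrac{3}{128}+\tfrac{a_1}{480}-\tfrac{11a_1^2}{60480}+\tfrac{a_2}{160}-\tfrac{a_1a_2}{1680}-\tfrac{a_2^2}{1680}$ (identical to your $Q$), and simply records $\max P = P\bigl(-\tfrac{63}{4},\tfrac{105}{8}\bigr)=\tfrac{3}{2560}$ without the optimization details you supply. Your critical-point system, the concavity check, and the evaluation of the maximum are all correct, and your remark that the lemma holds for every $(a_1,a_2)$ is exactly the implicit point that justifies the maximization.
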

\begin{proof}
Thus we conclude from \eqref{eq_FW} and \eqref{eq_FW_2} that
\begin{multline*}
F_m(W_{1,0}, W_{1,0}) + \left[ F_m(W_{1,0}, \Psi_m) + F_m(\Psi_m, W_{1,0}) \right] \\
\ge \left|\S^3\right| P(a_1,a_2) \ep_m^2 \|\pi_m\|^2 + O(\ep_m^3 |\log \ep_m|) + O(\ep_m^3 |\log \ep_m| \cdot \|\pi_m\|)
\end{multline*}
where
\[P(a_1,a_2) = -\frac{3}{128} + \frac{a_1}{480} - \frac{11 a_1^2}{60480} + \frac{a_2}{160} - \frac{a_1 a_2}{1680} - \frac{a_2^2}{1680}.\]
It holds that
\[\max_{a_1, a_2 \in \R} P(a_1,a_2) = P\(-\frac{63}{4}, \frac{105}{8}\) = \frac{3}{2560}.\]
Hence the assertion follows.
\end{proof}

\begin{proof}[Completion of the proof of Proposition \ref{prop_van} for $N = 5$]
Because $\tig_m \to \tig_0$ in $C^4(M, \R^{N \times N})$ as $m \to \infty$, the values of $\|\pi_m\|$ are uniformly bounded in $m \in \N$.
From \eqref{eq_poho_3}, \eqref{eq_poho_31}, \eqref{eq_poho_32} and \eqref{eq_FW_4}, we discover
\[O(\ep_m^3) \ge \frac{3}{2560} \left|\S^3\right| \ep_m^2 \|\pi_m\|^2 + O(\ep_m^3 |\log \ep_m|).\]
Accordingly,
\[O(\ep_m) \ge \frac{3}{2560} \left|\S^3\right| \|\pi_m\|^2 + O(\ep_m |\log \ep_m|).\]
Taking $m \to \infty$ on the both sides, we get \eqref{eq_van} for $N = 5$.
\end{proof}

\medskip \noindent
\textsc{Case $N = 6$:} The strategy is the same as the case $N = 5$.
By inserting $n = 5$ and $\gamma = \frac{1}{2}$ in (5.9) of \cite{KMW}, one can compute that
\[F_m(W_{1,0}, W_{1,0}) = O(\ep_m^3).\]
Also, computing as in Lemma \ref{lemma_FW}, we obtain
\begin{lemma}
It holds that
\begin{multline*}
F_m(W_{1,0}, \Psi_m) + F_m(\Psi_m, W_{1,0}) \\
\ge \left|\S^4\right| \( - \frac{\pi}{320} + \frac{a_1 \pi}{3584} - \frac{3 a_1^2 \pi}{163840} + \frac{a_2 \pi}{1280} - \frac{a_1 a_2 \pi}{16384} - \frac{a_2^2 \pi}{16384}\) \ep_m^2 \|\pi_m\|^2 + O(\ep_m^3).
\end{multline*}
\end{lemma}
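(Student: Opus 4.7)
The plan is to mimic the blueprint of Lemma \ref{lemma_FW} line by line, with the substitution $n=5$ (so $N=6$) throughout. First I would reproduce the computation leading to \eqref{eq_mt_22}: unfold the bilinear form $F_m$ using Lemma \ref{lemma_metric}, keep only the $\ep_m \pi_{ij} x_N$ term of $A_{ij}$ (all remaining terms contribute at most $O(\ep_m^3)$ by direct counting of the homogeneity in $\R^6_+$), integrate the conformal Killing part $x\cdot\nabla + \frac{N-2}{2}$ by parts, and use $-\Delta \Psi_m = 2\ep_m(\pi_m)_{ij} x_N \pa_{ij}W_{1,0}$ from \eqref{eq_lin}. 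One obtains an identity of the form
\begin{equation*}
F_m(W_{1,0},\Psi_m)+F_m(\Psi_m,W_{1,0}) = 2\ep_m(\pi_m)_{ij}\int_{\R^6_+} x_6\,\pa_{ij}W_{1,0}\,\Psi_m\,dx + O(\ep_m^3\,\|\pi_m\|).
\end{equation*}
The absence of the logarithmic factor (unlike the $N=5$ case) is because the decay of $W_{1,0}$ and $\Psi_m$ at infinity is faster in $\R^6_+$, so the analogue of the logarithmically divergent tail integral in \cite{KMW} is now convergent.

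Next I would split $\Psi_m=\Phi_m+\Xi_m$ according to Lemmas \ref{lemma_Phi} and \ref{lemma_U}. The $\Xi_m$ contribution is estimated via the variational inequality \eqref{eq_mp}: testing $\Xi_m$ in \eqref{eq_lin} and $\Phi_m$ in \eqref{eq_U} and combining as in the derivation of \eqref{eq_mt_23} yields
\begin{equation*}
2\ep_m(\pi_m)_{ij}\int_{\R^6_+} x_6\,\pa_{ij}W_{1,0}\,\Xi_m\,dx \ge \int_{\R^5} q_m\,\Phi_m\,d\bx,
\end{equation*}
which is valid since $N=6\ge 5$ so that $\Xi_m\in D^{1,2}(\R^6_+)$ and \eqref{eq_mp} applies. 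This reduces the whole problem to the two explicit integrals $2\ep_m(\pi_m)_{ij}\int_{\R^6_+}x_6\,\pa_{ij}W_{1,0}\,\Phi_m\,dx$ and $\int_{\R^5} q_m\Phi_m\,d\bx$.

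The rest is a direct computation, parallel to \eqref{eq_FW_31} and \eqref{eq_FW_32} with $|\S^3|$ replaced by $|\S^4|$ and all exponents shifted by one. For the first integral, one expands $\Phi_m$ from \eqref{eq_Phi_n1}, uses the symmetry identity $\int_{\R^5} x_ix_jx_kx_l F(|\bx|)\,d\bx = \frac{|\S^4|}{15}(\delta_{ij}\delta_{kl}+\delta_{ik}\delta_{jl}+\delta_{il}\delta_{jk})\int_0^\infty r^8 F(r)\,dr$ together with $\mathrm{trace}(\pi_m)=0$ to collapse the tensorial contraction into $\|\pi_m\|^2$, and then splits into three radial integrals in $x_6$ and $t=|\bx|$ of the form $\int_0^\infty x_6^{\alpha}(x_6+1)^{-\beta}dx_6$ and $\int_0^\infty t^8(t^2+1)^{-\beta}dt$ (evaluated by the Beta function). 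The second integral is computed in the same way, with an extra $r^8$ coming from the volume element of $\R^5$. Collecting the numerical constants produces the stated polynomial $-\frac{1}{320}+\frac{a_1}{3584}-\frac{3a_1^2}{163840}+\frac{a_2}{1280}-\frac{a_1 a_2}{16384}-\frac{a_2^2}{16384}$ multiplied by $|\S^4|\pi\,\ep_m^2\|\pi_m\|^2$; the residual error $O(\ep_m^3\|\pi_m\|)$ from the first step is absorbed into $O(\ep_m^3)$ since $\|\pi_m\|$ is uniformly bounded.

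I do not expect genuine conceptual obstacles: the only delicate point is bookkeeping the error terms so that no logarithm appears, which requires verifying that every discarded contribution (higher order terms of the metric expansion, curvature terms from $R[\tig_m]$ and $H[\tig_m]$, and the boundary tail at $|x|\sim \rho\ep_m^{-1}$) decays at least like $\int r^{-7}$ near infinity in $\R^6_+$, hence is $O(\ep_m^3)$. Once this is checked, the explicit Beta-function integrals proceed mechanically, and the optimization over $(a_1,a_2)$ (to be performed in the corollary that follows) will produce the sharp constant needed for the vanishing theorem in dimension six.
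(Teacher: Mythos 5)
Your plan coincides with the paper's own proof, which is literally ``computing as in Lemma \ref{lemma_FW}'': the reduction to $2\ep_m(\pi_m)_{ij}\int_{\R^6_+}x_6\,\pa_{ij}W_{1,0}\,\Psi_m\,dx$ modulo $O(\ep_m^3)$, the splitting $\Psi_m=\Phi_m+\Xi_m$, the use of the Nehari/mountain-pass inequality \eqref{eq_mp} (legitimate since $N=6\ge 5$) to bound the $\Xi_m$ contribution from below by $\int_{\R^5}q_m\Phi_m\,d\bx$, and the explicit Beta-function evaluation of the two remaining integrals. Your remark on why the logarithm disappears for $N=6$ (all tail integrals now converge, since the relevant integrands decay like $|x|^{-7}$ or faster in $\R^6_+$) is also correct.

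One concrete error, though: the fourth-moment identity you quote is wrong. On $\R^n$ the coefficient is $\frac{1}{n(n+2)}$, so for $n=5$ you should have
\[
\int_{\R^5} x_ix_jx_kx_l\,F(|\bx|)\,d\bx
= \frac{\left|\S^4\right|}{35}\(\delta_{ij}\delta_{kl}+\delta_{ik}\delta_{jl}+\delta_{il}\delta_{jk}\)\int_0^{\infty} r^8 F(r)\,dr,
\]
not $\frac{|\S^4|}{15}$ (your value corresponds to $n=3$; the paper's $N=5$ computation uses $\frac{1}{4\cdot 6}=\frac{1}{24}$, which after contracting against $(\pi_m)_{ij}(\pi_m)_{kl}$ with $\mathrm{trace}\,\pi_m=0$ gives the $\frac{1}{12}$ in \eqref{eq_FW_32}). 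Since the sole content of this lemma is the exact quadratic polynomial in $(a_1,a_2)$ whose maximum must come out positive for the $N=6$ vanishing theorem, carrying $\frac{1}{15}$ through would corrupt every coefficient by a factor of $\frac{7}{3}$; and since you assert rather than verify that the Beta-function bookkeeping reproduces the stated constants, this is the one step you must actually redo before the proof is complete.
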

Choosing the parameters $a_1 = -\frac{128}{7}$ and $a_2 = \frac{544}{35}$, we get
\begin{cor}
It holds that
\[F_m(W_{1,0}, W_{1,0}) + \left[ F_m(W_{1,0}, \Psi_m) + F_m(\Psi_m, W_{1,0}) \right]
\ge \frac{31\pi}{78400} \left|\S^3\right| \ep_m^2 \|\pi_m\|^2 + O(\ep_m^3).\]
\end{cor}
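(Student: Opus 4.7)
The plan is to mimic the derivation of Corollary \ref{cor_FW} for the five-dimensional case, simply assembling the two estimates now in hand. Combining the bound $F_m(W_{1,0}, W_{1,0}) = O(\ep_m^3)$ (obtained by specializing the formula of \cite{KMW} to $n = 5$, $\gamma = \tfrac{1}{2}$) with the lower bound for the cross terms supplied by the preceding lemma, one finds that for every choice of parameters $(a_1, a_2) \in \R^2$,
\[
F_m(W_{1,0}, W_{1,0}) + F_m(W_{1,0}, \Psi_m) + F_m(\Psi_m, W_{1,0}) \ge |\S^4|\, P(a_1, a_2)\, \ep_m^2 \|\pi_m\|^2 + O(\ep_m^3),
\]
where
\[
P(a_1, a_2) = \pi \left( -\tfrac{1}{320} + \tfrac{a_1}{3584} - \tfrac{3 a_1^2}{163840} + \tfrac{a_2}{1280} - \tfrac{a_1 a_2}{16384} - \tfrac{a_2^2}{16384} \right).
\]
The crucial observation, exactly as in dimension five, is that the function $\Phi_m$ furnished by Lemma \ref{lemma_Phi} solves the Poisson equation \eqref{eq_Phi_n} for \emph{every} pair $(a_1, a_2) \in \R^2$, so the decomposition $\Psi_m = \Phi_m + \Xi_m$ is legitimate for each such pair, and the cross-term inequality may therefore be invoked with the coefficients that maximize $P$.

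The second step is the routine optimization of the quadratic $P$. Solving the linear system $\nabla P(a_1, a_2) = 0$ produces the unique critical point $(a_1, a_2) = \bigl(-\tfrac{128}{7}, \tfrac{544}{35}\bigr)$, and the Hessian is negative-definite: its diagonal entries $-\tfrac{3\pi}{81920}$, $-\tfrac{\pi}{8192}$ are negative and its determinant $\tfrac{\pi^2}{5 \cdot 2^{28}}$ is positive. Hence $\bigl(-\tfrac{128}{7}, \tfrac{544}{35}\bigr)$ is a global maximum of $P$, and a direct substitution (most efficiently via the identity $P(a_*) = P(0) + \tfrac{1}{2}\nabla P(0)\cdot a_*$ valid at any critical point of a quadratic) yields $P\bigl(-\tfrac{128}{7}, \tfrac{544}{35}\bigr) = \tfrac{31\pi}{78400}$. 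Inserting this into the inequality above proves the corollary.

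I expect the principal obstacle to be bookkeeping rather than conceptual: one must carefully evaluate the handful of Beta-type integrals $\int_0^\infty \tfrac{t^k}{(t^2+1)^\ell}\,dt$ and $\int_0^\infty \tfrac{x_6^i(x_6 \pm 1)^j}{(x_6+1)^\ell}\,dx_6$ that enter the six-dimensional analogue of \eqref{eq_FW_31}--\eqref{eq_FW_32}, and then verify the arithmetic of the optimization. A welcome simplification relative to the $N = 5$ case is that the extra dimension furnishes faster decay for every integrand involved, so the logarithmic remainder $O(\ep_m^3 |\log \ep_m|)$ that appeared there collapses here to $O(\ep_m^3)$; any term of the form $O(\ep_m^3 \|\pi_m\|)$ is likewise absorbed into $O(\ep_m^3)$, using that $\tig_m \to \tig_0$ in $C^4(M, \R^{N \times N})$ forces $\|\pi_m\|$ to be uniformly bounded in $m$.
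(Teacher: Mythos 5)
Your proposal is correct and follows essentially the same route as the paper: take $F_m(W_{1,0},W_{1,0}) = O(\ep_m^3)$, invoke the preceding lemma's lower bound for the cross terms, and optimize the quadratic $P(a_1,a_2)$ at $(a_1,a_2) = \bigl(-\tfrac{128}{7}, \tfrac{544}{35}\bigr)$, which indeed yields the value $\tfrac{31\pi}{78400}$. (Your $|\S^4|$ is consistent with the lemma; the $|\S^3|$ in the paper's corollary statement appears to be a typo.)
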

\noindent From this, the desired result \eqref{eq_van} for $N = 6$ follows.

\medskip \noindent
\textsc{Case $N = 4$:} Because of the integrability issue on $W_{1,0}$, the computation becomes a little bit trickier than before.
Especially, it turns out that the terms involving $a_1$ and $a_2$ contribute nothing.
This is because the integrals involving them are $O(\ep_m^2)$, while the main order of $\mcp_{1m}(\wtv_m, \rho\ep_m^{-1})$ is $\ep_m^2 |\log \ep_m|$. Hence we set $a_1 = a_2 = 0$.

\begin{lemma}\label{eq_nW}
It holds that
\begin{equation}\label{eq_nFW}
F_m(W_{1,0}, W_{1,0}) = -\frac{\pi}{24} \left|\S^2\right| \|\pi_m\|^2 \ep_m^2 \log (\rho\ep_m^{-1}) + O(\ep_m^2).
\end{equation}
\end{lemma}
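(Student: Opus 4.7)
The plan is to isolate the $\ep_m^2\log(\rho\ep_m^{-1})$ coefficient of $F_m(W_{1,0},W_{1,0})$ by substituting the leading large-$|x|$ asymptotes $W_{1,0}(x)\sim|x|^{-2}$ and $x\cdot\nabla W_{1,0}+W_{1,0}\sim -|x|^{-2}$ into the integrand of \eqref{eq_Fm}. For $N=4$, every log-producing summand decays like $|x|^{-4}$, so integration over $\{1\le|x|\le\rho\ep_m^{-1}\}\cap\R^4_+$ against the volume element $r^3\,dr\,d\omega$ reproduces exactly the advertised logarithm, while all faster-decaying tails and the bounded region $|x|\le 1$ produce only $O(\ep_m^2)$.

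First I would reduce $(\Delta-\Delta_{\hg_m})W_{1,0}$ to its effective contribution. Choosing $\kappa$ sufficiently large in \eqref{eq_kappa}, the logarithmic derivative of $\sqrt{\det\hg_m}$ is absorbed into the remainder, and a direct computation exploiting $R_{ij}[h_m](y_m)=0$ from \eqref{eq_conf} shows that the divergence $\pa_a\hg_m^{ab}$ contributes only $O(\ep_m^3)$. Hence the leading part of $(\Delta-\Delta_{\hg_m})W_{1,0}$ reduces to $(\delta^{ij}-\hg_m^{ij})\pa_{ij}W_{1,0}$, and expanding $\hg_m^{-1}$ by Neumann series based on Lemma \ref{lemma_metric} yields the four pieces
\begin{multline*}
-2\ep_m(\pi_m)_{ij}x_N\pa_{ij}W_{1,0} - \tfrac{\ep_m^2}{3}R_{ikjl}[h_m]x_kx_l\pa_{ij}W_{1,0} \\
- \ep_m^2 R_{iNjN}[\tig_m]x_N^2\pa_{ij}W_{1,0} - 3\ep_m^2(\pi_m)_{ik}(\pi_m)_{kj}x_N^2\pa_{ij}W_{1,0},
\end{multline*}
up to $O(\ep_m^3)$; a possible cross-term in $\ep_m^2\pi_{ij,k}[\tig_m](y_m)x_kx_N\pa_{ij}W_{1,0}$ vanishes after integration by the reflection symmetry $\bx\to-\bx$ of $W_{1,0}$.

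Two of these four pieces contribute no logarithm: the linear-in-$\ep_m$ piece integrates to a scalar multiple of $\delta_{ij}$ by $\bx$-rotational symmetry of $W_{1,0}$ and therefore annihilates against the trace-free $\pi_m$; and the $R_{ikjl}[h_m]$ piece is log-free because every contraction of $R_{ikjl}[h_m]$ against a symmetric tensor of the form $c_1\delta_{ij}\delta_{kl}+c_2(\delta_{ik}\delta_{jl}+\delta_{il}\delta_{jk})$ collapses to $R_{ij}[h_m](y_m)$ or $R[h_m](y_m)$, both zero by \eqref{eq_conf}. For the three surviving contributions (the scalar-curvature term of \eqref{eq_Fm}, the $R_{iNjN}$ quadratic piece, and the $\pi_m^2$ quadratic piece), I would replace $W_{1,0}$ by $W^{\infty}(x)=|x|^{-2}$, so that $\pa_{ij}W^{\infty}=-2\delta_{ij}|x|^{-4}+8x_ix_j|x|^{-6}$, and reduce the angular integrations to the closed-form hemisphere identities
\[
\int_{\S^3_+}\omega_a^2\,d\omega = \tfrac{\pi^2}{4}, \qquad \int_{\S^3_+}\omega_a\omega_b\omega_c\omega_d\,d\omega = \tfrac{\pi^2}{24}\bigl(\delta_{ab}\delta_{cd}+\delta_{ac}\delta_{bd}+\delta_{ad}\delta_{bc}\bigr).
\]
The Gauss equation at $y_m$ combined with \eqref{eq_conf} gives $R[\tig_m](y_m)=-\|\pi_m\|^2$, so the scalar-curvature piece contributes $+\tfrac{\pi^2}{6}\|\pi_m\|^2\ep_m^2\log(\rho\ep_m^{-1})$; contracting $R_{iNjN}[\tig_m]\delta_{ij}=R_{NN}[\tig_m](y_m)=-\|\pi_m\|^2$ against the angular integrals gives another $+\tfrac{\pi^2}{6}\|\pi_m\|^2\ep_m^2\log(\rho\ep_m^{-1})$; and the $\pi_m^2$ piece contracts to $(\pi_m)_{ik}(\pi_m)_{ki}=\|\pi_m\|^2$ and contributes $-\tfrac{\pi^2}{2}\|\pi_m\|^2\ep_m^2\log(\rho\ep_m^{-1})$. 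Summing, $\bigl(\tfrac16+\tfrac16-\tfrac12\bigr)\pi^2\|\pi_m\|^2=-\tfrac{\pi^2}{6}\|\pi_m\|^2=-\tfrac{\pi}{24}|\S^2|\|\pi_m\|^2$, which is exactly \eqref{eq_nFW}.

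The main obstacle will be the bookkeeping required to confirm a uniform $O(\ep_m^2)$ remainder: one must check that every correction arising from the subleading expansion of $W_{1,0}$ at infinity, from the cubic and higher-order terms in the metric expansion of $\hg_m$, and from the Taylor remainder of $R[\tig_m](\ep_m x)$, produces an integrand whose integral over $B^N_+(0,\rho\ep_m^{-1})$ grows no faster than $\ep_m^{-2}$. With $\kappa$ chosen sufficiently large in \eqref{eq_kappa}, this reduces to polynomial decay estimates of the kind already employed in this section.
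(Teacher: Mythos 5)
Your proposal is correct and follows essentially the same route as the paper: the same decomposition of $F_m(W_{1,0},W_{1,0})$ via the metric expansion of Lemma \ref{lemma_metric}, the same cancellations from $\operatorname{trace}(\pi_m)=0$, $R_{ij}[h](y_m)=0$, $R_{NN}[\tig_m](y_m)=-\|\pi_m\|^2$ and the $\bx\to-\bx$ symmetry, and the same three surviving log-producing terms, whose coefficients $\tfrac{\pi^2}{6}+\tfrac{\pi^2}{6}-\tfrac{\pi^2}{2}=-\tfrac{\pi^2}{6}=-\tfrac{\pi}{24}|\S^2|$ agree with the paper's $\wtf_{0m}+\wtf_{1m}$. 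The only (harmless) difference is that you extract the logarithm by substituting the asymptote $|x|^{-2}$ and hemisphere moment identities, whereas the paper evaluates the exact integrals of $W_{1,0}Z_{1,0}^0$ and $x_4^2\Delta_{\bx}W_{1,0}Z_{1,0}^0$ directly.
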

\begin{proof}
Lemma \ref{lemma_conf} and the Gauss-Codazzi equation implies that \[R[\tig_m](\ep_m x) = - \|\pi_m\|^2 + O(\ep_m |x|) \quad \text{in } B^5_+(0,\rho\ep_m^{-1}).\]
From this, Lemma \ref{lemma_metric} (more precisely, Lemmas 3.1 and 3.2 of \cite{Es}) and \eqref{eq_Fm}, we find that
\begin{equation}\label{eq_nFW_11}
F_m(W_{1,0}, W_{1,0}) = \wtf_{0m} + \wtf_{1m} + \wtf_{2m} + O(\ep_m^2)
\end{equation}
where
\begin{align*}
\wtf_{0m} &= \frac{1}{6}\, \ep_m^2 \int_{B^4_+(0,\rho\ep_m^{-1})} R[\tig_m](\ep_m x) W_{1,0} Z_{1,0}^0 dx \\
&= - \frac{1}{6}\, \ep_m^2 \|\pi_m\|^2 \int_{B^4_+(0,\rho\ep_m^{-1})} W_{1,0} Z_{1,0}^0 dx + O(\ep_m^2),
\end{align*}
\begin{align*}
\wtf_{1m} &= \int_{B^4_+(0,\rho\ep_m^{-1})} (\delta^{ij}-\hg_m^{ij}) \pa_{ij} W_{1,0} Z_{1,0} dx \\
&= - \frac{1}{3}\, \ep_m^2 \left[3\|\pi_m\|^2 + R_{NN}[\tig_m](y_m) \right] \int_{B^4_+(0,\rho\ep_m^{-1})} x_4^2 \Delta_{\bx} W_{1,0} Z_{1,0}^0 dx + O(\ep_m^2) \\
&= -\frac{2}{3}\, \ep_m^2 \|\pi_m\|^2 \int_{B^4_+(0,\rho\ep_m^{-1})} x_4^2 \Delta_{\bx} W_{1,0} Z_{1,0}^0 dx + O(\ep_m^2)
\end{align*}
and
\begin{equation}\label{eq_nFW_14}
\begin{aligned}
\wtf_{2m} &= -\int_{B^4_+(0,\rho\ep_m^{-1})} \(\frac{\pa_a \sqrt{|\hg_m|}}{\sqrt{|\hg_m|}}\) \hg_m^{ab} \pa_b W_{1,0} Z_{1,0} dx \\
&= \ep_m^2 \left[\|\pi_m\|^2 + R_{NN}[\tig_m](y_m) \right] \int_{B^4_+(0,\rho\ep_m^{-1})} \hg_m^{ab} \pa_b W_{1,0} Z_{1,0} dx + O(\ep_m^2) = O(\ep_m^2).
\end{aligned}
\end{equation}

On the other hand, since
\begin{align*}
&\ \int_{B^4_+(0,\rho\ep_m^{-1})} W_{1,0} Z_{1,0}^0 dx \\
&= \int_0^{\rho\ep_m^{-1}} \int_{\R^3} \frac{1-|\bx|^2-x_4^2}{(|\bx|^2+(x_4+1)^2)^3} d\bx dx_4 + O(1) \\
&= - \left|\S^2\right| \left[ \int_0^{\rho\ep_m^{-1}} \frac{dx_4}{x_4+1}
\int_0^{\infty} \frac{t^4 dt}{(t^2+1)^3}
+ \int_0^{\rho\ep_m^{-1}} \frac{x_4^2 dx_4}{(x_4+1)^3}
\int_0^{\infty} \frac{t^2 dt}{(t^2+1)^3} \right] + O(1) \\
&= - \frac{\pi}{4} \left|\S^2\right| \log(\rho\ep_m^{-1}) + O(1),
\end{align*}
we have
\begin{equation}\label{eq_nFW_12}
\wtf_{0m} = {\pi \over 24} \left|\S^2\right| \|\pi_m\|^2 \ep_m^2 \log(\rho\ep_m^{-1}) + O(\ep_m^2).
\end{equation}
Moreover,
\begin{align*}
&\ \int_{B^4_+(0,\rho\ep_m^{-1})} x_4^2 \Delta_{\bx} W_{1,0} Z_{1,0}^0 dx
\\ &= 2 \int_0^{\rho\ep_m^{-1}} \int_{\R^3} \frac{ x_4^2 \left[|\bx|^2-3(x_4+1)^2\right] (1-|\bx|^2-x_4^2)}{(|\bx|^2+(x_4+1)^2)^5} dx \\
&= -2 \left|\S^2\right| \int_0^{\rho\ep_m^{-1}} \int_0^{\infty} \frac{ r^2 x_4^2 \left[r^2-3(x_4+1)^2\right] (r^2+x_4^2)}{(r^2+(x_4+1)^2)^5} dr dx_4 + O(1) \\
&= \frac{\pi}{8} \left|\S^2\right| \log(\rho\ep_m^{-1}) + O(1),
\end{align*}
from which we deduce that
\begin{equation}\label{eq_nFW_13}
\wtf_{1m} = - {\pi \over 12} \left|\S^2\right| \|\pi_m\|^2 \ep_m^2 \log(\rho\ep_m^{-1}) + O(\ep_m^2).
\end{equation}
Combining \eqref{eq_nFW_11}-\eqref{eq_nFW_13}, we obtain \eqref{eq_nFW}.
\end{proof}

Unlike the cases $N = 5$ and $6$, we do not exploit the mountain pass structure of the boundary Yamabe problem in $\R^N_+$.
Instead, we use the integrability (or the decay property) of the functions involving the problem.

We define
\begin{equation}\label{eq_Phi_delta}
\Phi_{\delta}(x) = \ep \pi_{ij}x_ix_j \left[ \frac{x_4-1}{(|\bx|^2 + (x_4+1)^2)^2} + \frac{\delta}{(|\bx|^2 + (x_4+1)^2)^{3 \over 2}} \right]
\end{equation}
for $\delta$ small, which resembles the modified correction term $\psi_{\ep,\delta}$ defined in Page 400 of \cite{Ma3}.
If $\delta = 0$, the function $\Phi_{\delta}$ is reduced to $\Phi$ in \eqref{eq_Phi_n1} with $a_1 = a_2 = 0$.
Let also $\Xi_{\delta} = \Psi - \Phi_{\delta}$ where $\Psi$ is the solution of \eqref{eq_lin}. Then it satisfies
\begin{equation}\label{eq_U_delta}
\begin{cases}
- \Delta \Xi_{\delta} = \dfrac{9\delta\ep\pi_{ij}x_ix_j}{(|\bx|^2 + (x_4+1)^2)^{5 \over 2}} &\text{in } \R^4_+,\\
-\lim\limits_{x_4 \to 0} \dfrac{\pa \Xi_{\delta}}{\pa x_4} = 4 w_{1,0} \Xi_{\delta} + q_{\delta} &\text{on } \R^3
\end{cases}
\end{equation}
where
\[q_{\delta}(\bx) = \frac{\ep \pi_{ij}x_ix_j}{(|\bx|^2+1)^2} + \frac{ \delta \ep \pi_{ij}x_ix_j}{(|\bx|^2+1)^{5 \over 2}} \quad \text{on } \R^3.\]
\begin{lemma}
It holds that
\begin{multline}\label{eq_nFW_2}
F_m(W_{1,0}, \Psi_m) + F_m(\Psi_m, W_{1,0}) \\
\ge \(\frac{\pi}{24} + \frac{64}{105} \delta + O(\delta^2)\) \left|\S^2\right| \ep_m^2 \log (\rho\ep_m^{-1}) \|\pi_m\|^2 + O(\ep_m^2)
\end{multline}
for $\delta$ small.
\end{lemma}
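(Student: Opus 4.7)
The strategy parallels that of Lemma \ref{lemma_FW} for $N=5$, with the essential difference that in dimension four neither $\Psi_m$ nor $\Xi_{m,\delta}$ lies in $D^{1,2}(\R^4_+)$, so the mountain-pass inequality \eqref{eq_mp} is unavailable and must be replaced by integration by parts on the truncated half-ball $B^4_+(0,\rho\ep_m^{-1})$, with careful accounting of the logarithmic divergences at the upper cutoff. Following the reduction leading to (5.10) of \cite{KMW} and using the metric expansion of Lemma \ref{lemma_metric} together with the pointwise bounds \eqref{eq_lin_1}, the remainder boundary contributions on $\pa_I B^4_+(0,\rho\ep_m^{-1})$ are absorbed into an $O(\ep_m^2)$ error, producing the reduction
\begin{equation*}
F_m(W_{1,0},\Psi_m)+F_m(\Psi_m,W_{1,0}) = 2\ep_m (\pi_m)_{ij}\int_{B^4_+(0,\rho\ep_m^{-1})} x_4 \pa_{ij} W_{1,0}\, \Psi_m \, dx + O(\ep_m^2).
\end{equation*}

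I would then split $\Psi_m = \Phi_{m,\delta}+\Xi_{m,\delta}$ and handle the two pieces separately. For the $\Phi_{m,\delta}$-part, substitute \eqref{eq_Phi_delta} and expand $\pa_{ij}W_{1,0}$. The angular integral of $(\pi_m)_{ij}(\pi_m)_{kl}\theta_i\theta_j\theta_k\theta_l$ collapses to $\frac{2\left|\S^{n-1}\right|}{n(n+2)}\|\pi_m\|^2$ by the standard Kronecker-delta identity combined with $\mathrm{trace}(\pi_m)=0$, specialized to $n=3$. The radial integrals in $|\bx|$ reduce to elementary Beta-function expressions, while the $x_4$-integrals on $(0,\rho\ep_m^{-1})$ produce the logarithmic divergence exactly as in Lemma \ref{eq_nW}. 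A Taylor expansion in $\delta$ around $\delta=0$ then yields
\begin{equation*}
2\ep_m(\pi_m)_{ij}\int_{B^4_+(0,\rho\ep_m^{-1})} x_4\pa_{ij}W_{1,0}\,\Phi_{m,\delta}\,dx = \left(\frac{\pi}{24} + \frac{64}{105}\delta + O(\delta^2)\right)\left|\S^2\right|\ep_m^2\|\pi_m\|^2 \log(\rho\ep_m^{-1}) + O(\ep_m^2);
\end{equation*}
the $\delta$-free term here is precisely what will cancel the $-\pi/24$ coefficient of \eqref{eq_nFW}, while the $\tfrac{64}{105}\delta$ correction is what ultimately forces $\|\pi_m\|^2 \lesssim 1/|\log\ep_m|$.

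For the $\Xi_{m,\delta}$-part, I would use the identity $-\Delta\Psi_m = 2\ep_m(\pi_m)_{ij}x_4\pa_{ij}W_{1,0}$ from \eqref{eq_lin} and integrate by parts, invoking the boundary condition for $\Xi_{m,\delta}$ from \eqref{eq_U_delta} on $\{x_4=0\}$ and absorbing the contribution on $\pa_I B^4_+(0,\rho\ep_m^{-1})$ into $O(\ep_m^2)$ via \eqref{eq_lin_1}. Combining with the identity obtained by testing $\Phi_{m,\delta}$ in \eqref{eq_U_delta} leads to
\begin{equation*}
2\ep_m(\pi_m)_{ij}\int_{B^4_+(0,\rho\ep_m^{-1})} x_4\pa_{ij}W_{1,0}\,\Xi_{m,\delta}\,dx = \int_{B^4_+}|\nabla\Xi_{m,\delta}|^2 dx - 4\int_{B^3} w_{1,0}\,\Xi_{m,\delta}^2 d\bx - \int_{B^3} q_\delta\,\Phi_{m,\delta}\,d\bx + O(\ep_m^2),
\end{equation*}
where the last integral is $O(\ep_m^2)$ directly from \eqref{eq_q}, \eqref{eq_Phi_delta} and the $|\bx|^{-4}$-decay of the integrand. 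The remaining Dirichlet-minus-trace form is non-negative: testing $W_{1,0}$ in \eqref{eq_U_delta} together with $\mathrm{trace}(\pi_m)=0$ delivers the orthogonality of $\Xi_{m,\delta}$ to $W_{1,0}$ in the tangent space to the Nehari manifold at $W_{1,0}$, and the same second-variation argument used for \eqref{eq_mp} then yields the non-negativity, modulo a truncation error of order $\ep_m^2$ controlled by \eqref{eq_lin_1}. Adding this to the $\Phi_{m,\delta}$ contribution establishes \eqref{eq_nFW_2}.

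The main obstacle will be this last step: in dimension four the second-variation argument on the full half-space is not available, and the truncation to $B^4_+(0,\rho\ep_m^{-1})$ must exploit the improved decay of $\Xi_{m,\delta}$ that comes precisely from the $\delta$-correction in $\Phi_{m,\delta}$, which introduces the extra source term $9\delta\ep\pi_{ij}x_ix_j(|\bx|^2+(x_4+1)^2)^{-5/2}$ in \eqref{eq_U_delta}. The additional decay thereby generated is what keeps the cutoff boundary errors at the announced order $O(\ep_m^2)$, and explains why Marques's modified correction term $\psi_{\ep,\delta}$, rather than $\psi_\ep$, must be used in the four-dimensional regime.
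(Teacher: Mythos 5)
Your overall architecture (reduce to $2\ep_m(\pi_m)_{ij}\int x_4\pa_{ij}W_{1,0}\Psi_m\,dx$, split $\Psi_m=\Phi_{m,\delta}+\Xi_{m,\delta}$, compute the $\Phi_{m,\delta}$-part explicitly and bound the $\Xi_{m,\delta}$-part from below) is the paper's, but the quantitative bookkeeping contains two compensating errors, so the proof as written does not go through. First, the $\Phi_{m,\delta}$-integral does \emph{not} produce $\frac{64}{105}\delta$: carrying out the angular reduction with $\frac{2|\S^2|}{15}\|\pi_m\|^2$ and the Beta-function integrals $\int_0^\infty t^6(t^2+1)^{-5}dt=\frac{5\pi}{256}$ and $\int_0^\infty t^6(t^2+1)^{-9/2}dt=\frac17$ gives exactly $\bigl(\frac{\pi}{24}+\frac{32}{105}\delta\bigr)|\S^2|\ep_m^2\log(\rho\ep_m^{-1})\|\pi_m\|^2+O(\ep_m^2)$ (this is \eqref{eq_nFW_31}). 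The missing $\frac{32}{105}\delta$ lives in precisely the two terms you discard. Your claim that $\int q_\delta\Phi_{m,\delta}\,d\bx=O(\ep_m^2)$ by ``$|\bx|^{-4}$-decay'' is false: the cross term between the $\delta$-free part of $q_\delta$ (which decays like $|\bx|^{-2}$) and the $\delta$-part of $\Phi_\delta$ on $\{x_4=0\}$ (which decays like $|\bx|^{-1}$) is of size $\delta\ep_m^2|\bx|^{-3}$, logarithmically divergent on $\R^3$, and contributes $\frac{2}{15}\delta|\S^2|\ep_m^2\log(\rho\ep_m^{-1})\|\pi_m\|^2$ (see \eqref{eq_nFW_32}). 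Moreover, since $\Xi_{m,\delta}$ is \emph{not} harmonic — \eqref{eq_U_delta} carries the interior source $9\delta\ep_m\pi_{ij}x_ix_j(|\bx|^2+(x_4+1)^2)^{-5/2}$ — testing $\Phi_{m,\delta}$ in \eqref{eq_U_delta} produces an additional interior integral $\int\frac{9\delta\ep_m\pi_{ij}x_ix_j}{(|\bx|^2+(x_4+1)^2)^{5/2}}\Phi_{m,\delta}\,dx$, which is again log-divergent and contributes $\frac{6}{35}\delta|\S^2|\ep_m^2\log(\rho\ep_m^{-1})\|\pi_m\|^2$ (see \eqref{eq_nFW_311}). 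Your identity for the $\Xi_{m,\delta}$-part omits this term entirely (and has the wrong sign on $\int q_\delta\Phi_{m,\delta}$). The correct total is $\frac{32}{105}+\frac{18}{105}+\frac{14}{105}=\frac{64}{105}$.

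Second, the step you flag as ``the main obstacle'' is in fact a non-issue, and your proposed fix (a truncated Nehari/second-variation argument) is the wrong tool here. In dimension $N=4$ the boundary quadratic terms are harmless by pure integrability: $w_{1,0}\sim|\bx|^{-2}$ and $\Psi_m,\Xi_{m,\delta}=O(\ep_m(1+|\bx|)^{-1})$ by \eqref{eq_decay}, so $\int_{\R^3}w_{1,0}\Xi_{m,\delta}^2\,d\bx$, $\int_{\R^3}w_{1,0}\Phi_{m,\delta}\Xi_{m,\delta}\,d\bx$, etc., are all convergent integrals of size $O(\ep_m^2)$ with no logarithm. Hence after the two integrations by parts one only needs the trivial inequality $\int|\nabla\Xi_{m,\delta}|^2\,dx\ge0$, not the mountain-pass inequality \eqref{eq_mp}; the paper states explicitly that the mountain-pass structure is \emph{not} used for $N=4$. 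The role of the parameter $\delta$ is not to improve the decay of $\Xi_{m,\delta}$ at the cutoff, but to make the explicitly computable $\log$-coefficient strictly positive after the cancellation of the two $\pm\frac{\pi}{24}$ terms.
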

\begin{proof}
Let $\Phi_{m,\delta}$ be the function $\Phi_{\delta}$ in \eqref{eq_Phi_delta} with $\ep = \ep_m$ and $\pi = \pi_m$.
Set $\Xi_{m,\delta}$ and $q_{m,\delta}$ in an analogous manner.
By \eqref{eq_Phi_n1} and (4.2) of \cite{KMW}, it holds that
\begin{equation}\label{eq_decay}
|\Phi_{m,\delta}(x)| + |\Xi_{m,\delta}(x)| \le \frac{C\ep_m |\pi_m|_{\infty}}{1+|x|}
\quad \text{and} \quad
|\nabla \Phi_{m,\delta}(x)| + |\nabla \Xi_{m,\delta}(x)| \le \frac{C\ep_m |\pi_m|_{\infty}}{1+|x|^2}
\end{equation}
where $|\pi_m|_{\infty} = \max_{i,j=1,2,3} |(\pi_m)_{ij}|$.
Integrating by parts, and employing \eqref{eq_decay},
\[\int_{B^3(0,\rho\ep_m^{-1})} \frac{d\bx}{1+|\bx|^3+(\rho\ep_m^{-1})^3} \le \left|\S^2\right| \int_0^{\rho\ep_m^{-1}} \frac{r^2 dr}{r^3+(\rho\ep_m^{-1})^3}
= \left|\S^2\right| \int_0^1 \frac{dt}{t^3+1} = O(1)\]
and
\[\int_0^{\rho\ep_m^{-1}} \int_{\pa B^3(0,\rho\ep_m^{-1})} \frac{dx}{1+|x|^3}
\le C \int_0^{\rho\ep_m^{-1}} \frac{(\rho\ep_m^{-1})^2 dx_4}{1+(\rho\ep_m^{-1})^3+x_4^3}
\le C\int_0^1 \frac{dt}{t^3+1} = O(1),\]
we calculate that
\begin{align}
&\ F_m(W_{1,0}, \Psi_m) + F_m(\Psi_m, W_{1,0}) \nonumber \\
&= - 2 \ep_m (\pi_m)_{ij} \int_0^{\rho\ep_m^{-1}} \int_{B^3(0,\rho\ep_m^{-1})} x_4 \left[\pa_{ij} W_{1,0} \(x_k \pa_k \Psi_m + x_4 \pa_4 \Psi_m + \Psi_m\) \right. \nonumber \\
&\hspace{130pt} \left. + \pa_{ij} \Psi_m \(x_k \pa_k W_{1,0} + x_4 \pa_4 W_{1,0} + W_{1,0}\) \right] d\bx dx_4 + O(\ep_m^2) \nonumber \\
&= 2 \ep_m (\pi_m)_{ij} \int_0^{\rho\ep_m^{-1}} \int_{B^3(0,\rho\ep_m^{-1})} x_4 \left[ 4\, \pa_i W_{1,0} \pa_j \Psi_m
+ \pa_i W_{1,0} (x_k \pa_{jk} \Psi_m + x_4 \pa_{j4} \Psi_m) \right. \label{eq_mt_32} \\
&\hspace{183pt} \left. + (x_k \pa_{ik} W_{1,0} + x_4 \pa_{i4} W_{1,0}) \pa_j \Psi_m \right] dx + O(\ep_m^2) \nonumber \\
&= - 2 \ep_m (\pi_m)_{ij} \int_0^{\rho\ep_m^{-1}} \int_{B^3(0,\rho\ep_m^{-1})} x_4 \pa_i W_{1,0} \pa_j \Psi_m dx + O(\ep_m^2) \nonumber \\
&= 2\ep_m (\pi_m)_{ij} \int_0^{\rho\ep_m^{-1}} \int_{B^3(0,\rho\ep_m^{-1})} \( x_4 \pa_{ij} W_{1,0} \Phi_{m,\delta} + x_4 \pa_{ij} W_{1,0} \Xi_{m,\delta} \) dx + O(\ep_m^2). \nonumber
\end{align}

On the other hand, by testing $\Xi_{m,\delta}$ in \eqref{eq_lin} and applying \eqref{eq_decay} once more, we obtain
\begin{multline*}
2\ep_m (\pi_m)_{ij} \int_0^{\rho\ep_m^{-1}} \int_{B^3(0,\rho\ep_m^{-1})} x_4 \pa_{ij} W_{1,0} \Xi_{m,\delta} dx \\
= \int_0^{\rho\ep_m^{-1}} \int_{B^3(0,\rho\ep_m^{-1})} \nabla \Phi_{m,\delta} \cdot \nabla \Xi_{m,\delta} dx
+ \int_0^{\rho\ep_m^{-1}} \int_{B^3(0,\rho\ep_m^{-1})} |\nabla \Xi_{m,\delta}|^2 dx + O(\ep_m^2).
\end{multline*}
Also, testing $\Phi_{m,\delta}$ in \eqref{eq_U_delta} shows
\begin{multline*}
\int_0^{\rho\ep_m^{-1}} \int_{B^3(0,\rho\ep_m^{-1})} \nabla \Xi_{m,\delta} \cdot \nabla \Phi_{m,\delta} dx \\
= \int_0^{\rho\ep_m^{-1}} \int_{B^3(0,\rho\ep_m^{-1})} \dfrac{9\delta\ep_m\pi_{ij}x_ix_j}{(|\bx|^2 + (x_4+1)^2)^{5 \over 2}} \Phi_{m,\delta} dx
+ \int_{B^3(0,\rho\ep_m^{-1})} q_{m,\delta} \Phi_{m,\delta} d\bx + O(\ep_m^2).
\end{multline*}
Consequently,
\begin{multline}\label{eq_mt_33}
2\ep_m (\pi_m)_{ij} \int_0^{\rho\ep_m^{-1}} \int_{B^3(0,\rho\ep_m^{-1})} x_4 \pa_{ij} W_{1,0} \Xi_m dx \\
\ge \int_0^{\rho\ep_m^{-1}} \int_{B^3(0,\rho\ep_m^{-1})} \dfrac{9\delta\ep_m\pi_{ij}x_ix_j}{(|\bx|^2 + (x_4+1)^2)^{5 \over 2}} \Phi_{m,\delta} dx
+ \int_{B^3(0,\rho\ep_m^{-1})} q_{m,\delta} \Phi_{m,\delta} d\bx + O(\ep_m^2).
\end{multline}

Combining \eqref{eq_mt_32} and \eqref{eq_mt_33}, we obtain
\begin{align}
F_m(W_{1,0}, \Psi_m) + F_m(\Psi_m, W_{1,0})
&\ge (\pi_m)_{ij} \( 2\ep_m \int_0^{\rho\ep_m^{-1}} \int_{B^3(0,\rho\ep_m^{-1})} x_4 \pa_{ij} W_{1,0} \Phi_{m,\delta} dx \right. \nonumber \\
&\hspace{50pt} + \int_0^{\rho\ep_m^{-1}} \int_{B^3(0,\rho\ep_m^{-1})} \dfrac{9\delta\ep_m\pi_{ij}x_ix_j}{(|\bx|^2 + (x_4+1)^2)^{5 \over 2}} \Phi_{m,\delta} dx \nonumber \\
&\hspace{50pt} \left. + \int_{B^3(0,\rho\ep_m^{-1})} q_{m,\delta} \Phi_{m,\delta} d\bx\) + O(\ep_m^2). \label{eq_nFW_3}
\end{align}

By applying \eqref{eq_Phi_delta}, we evaluate
\begin{equation}\label{eq_nFW_31}
\begin{aligned}
&\ 2\ep_m (\pi_m)_{ij} \int_0^{\rho\ep_m^{-1}} \int_{B^3(0,\rho\ep_m^{-1})} x_4 \pa_{ij} W_{1,0} \Phi_{m,\delta} dx \\
&= 16 \ep_m^2 (\pi_m)_{ij} (\pi_m)_{kl} \int_0^{\rho\ep_m^{-1}} \int_{B^3(0,\rho\ep_m^{-1})}
x_ix_jx_kx_l \\
&\hspace{100pt} \left[ \frac{x_4^2}{(|\bx|^2+(x_4+1)^2)^5} + \delta \frac{x_4}{(|\bx|^2 + (x_4+1)^2)^{9 \over 2}} \right] dx + O(\ep_m^2) \\
&= \frac{32}{15} \left|\S^2\right| \left[\int_0^{\rho\ep_m^{-1}} \frac{x_4^2 dx_4}{(x_4+1)^3} \int_0^{\infty} \frac{t^6 dt}{(t^2+1)^5} \right. \\
&\hspace{100pt} \left. + \delta \int_0^{\rho\ep_m^{-1}} \frac{x_4 dx_4}{(x_4+1)^2} \int_0^{\infty} \frac{t^6 dt}{(t^2+1)^{9 \over 2}} \right] \ep_m^2 \|\pi_m\|^2 + O(\ep_m^2) \\
&= \(\frac{\pi}{24} + \frac{32}{105} \delta\) \left|\S^2\right| \ep_m^2 \log(\rho\ep_m^{-1}) \|\pi_m\|^2 + O(\ep_m^2),
\end{aligned}
\end{equation}
\begin{equation}\label{eq_nFW_311}
\begin{aligned}
&\ 9\delta\ep_m (\pi_m)_{ij} \int_0^{\rho\ep_m^{-1}} \int_{B^3(0,\rho\ep_m^{-1})} \frac{x_ix_j}{(|\bx|^2 + (x_4+1)^2)^{5 \over 2}} \Phi_{m,\delta} dx \\
&= 9\delta\ep_m^2 (\pi_m)_{ij}(\pi_m)_{kl} \int_0^{\rho\ep_m^{-1}} \int_{B^3(0,\rho\ep_m^{-1})}
\dfrac{x_ix_jx_kx_lx_4}{(|\bx|^2 + (x_4+1)^2)^{9 \over 2}} dx \\
&\hspace{200pt} + O(\delta^2 \log(\rho\ep_m^{-1}) \|\pi_m\|^2) + O(\ep_m^2) \\
&= \frac{6}{35} \delta \left|\S^2\right| \ep_m^2 \log(\rho\ep_m^{-1}) \|\pi_m\|^2 + O(\delta^2 \log(\rho\ep_m^{-1}) \|\pi_m\|^2) + O(\ep_m^2) \end{aligned}
\end{equation}
and
\begin{equation}\label{eq_nFW_32}
\begin{aligned}
\int_{B^3(0,\rho\ep_m^{-1})} q_{m,\delta} \Phi_{m,\delta} d\bx
&= \ep_m^2 \delta (\pi_m)_{ij} (\pi_m)_{kl} \int_{B^3(0,\rho\ep_m^{-1})} \frac{x_ix_jx_kx_l}{(|\bx|^2 + 1)^{7 \over 2}} d\bx + O(\ep_m^2) \\
&= \frac{2}{15} \delta \left|\S^2\right| \ep_m^2 \log(\rho\ep_m^{-1}) \|\pi_m\|^2.
\end{aligned}
\end{equation}
Putting \eqref{eq_nFW_3}-\eqref{eq_nFW_32}, we deduce \eqref{eq_nFW_2}.
\end{proof}

\begin{cor}\label{cor_FW_3}
It holds that
\begin{multline*}
F_m(W_{1,0}, W_{1,0}) + \left[ F_m(W_{1,0}, \Psi_m) + F_m(\Psi_m, W_{1,0}) \right] \\
\ge \(\frac{64}{105} \delta + O(\delta^2)\) \left|\S^2\right| \ep_m^2 \log (\rho\ep_m^{-1}) \|\pi_m\|^2 + O(\ep_m^2)
\end{multline*}
for $\delta$ small.
\end{cor}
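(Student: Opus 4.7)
The plan is straightforward: this corollary should follow by simply adding Lemma \ref{eq_nW} (which gives the expansion of $F_m(W_{1,0},W_{1,0})$) to the preceding lemma (which bounds $F_m(W_{1,0},\Psi_m) + F_m(\Psi_m,W_{1,0})$ from below). The two leading logarithmic contributions, namely $-\frac{\pi}{24}|\S^2|\,\ep_m^2 \log(\rho\ep_m^{-1})\|\pi_m\|^2$ from $F_m(W_{1,0},W_{1,0})$ and $+\frac{\pi}{24}|\S^2|\,\ep_m^2 \log(\rho\ep_m^{-1})\|\pi_m\|^2$ from the mixed terms, are designed to cancel, leaving only the $\delta$-dependent contribution $\bigl(\frac{64}{105}\delta + O(\delta^2)\bigr)|\S^2|\,\ep_m^2 \log(\rho\ep_m^{-1})\|\pi_m\|^2$ together with the $O(\ep_m^2)$ remainders from each lemma, which combine into a single $O(\ep_m^2)$.

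This is the exact analogue of Corollary \ref{cor_FW} in the 5-dimensional case: there, $F_m(W_{1,0},W_{1,0})$ produced a negative constant multiple of $\ep_m^2\|\pi_m\|^2$, and the mixed term had to be optimized over $(a_1,a_2)$ to beat this negative contribution and yield a net positive coefficient. Here in dimension $N = 4$ the obstruction is more subtle: the leading order is $\ep_m^2 \log(\rho\ep_m^{-1})$ rather than $\ep_m^2$, and contributions from the $a_1, a_2$-terms in $\Phi$ turn out to be only $O(\ep_m^2)$, so those degrees of freedom are useless at logarithmic order. This is precisely why the authors replaced $\Phi$ by the modified $\Phi_{\delta}$ in \eqref{eq_Phi_delta} with the additional slower-decaying correction of size $\delta$; the coefficient $\frac{64}{105}\delta$ in the bound arises from this new $\delta$-piece and is linear in $\delta$.

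Since the cancellation is built into the statements of the two preceding results, there is essentially no work to do beyond verifying that the $O(\ep_m^2)$ error terms add up to $O(\ep_m^2)$. Concretely, I would write
\begin{align*}
& F_m(W_{1,0}, W_{1,0}) + \bigl[F_m(W_{1,0}, \Psi_m) + F_m(\Psi_m, W_{1,0})\bigr] \\
&\quad \ge \left( -\frac{\pi}{24} + \frac{\pi}{24} + \frac{64}{105}\delta + O(\delta^2) \right) |\S^2|\,\ep_m^2 \log(\rho\ep_m^{-1})\|\pi_m\|^2 + O(\ep_m^2),
\end{align*}
and the claimed inequality follows immediately.

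The only point that deserves any care — and which I would flag as the conceptual (not computational) obstacle underlying the whole strategy for $N=4$ — is that the positivity of the coefficient of $\ep_m^2\log(\rho\ep_m^{-1})\|\pi_m\|^2$ is achieved only at first order in $\delta$, so $\delta$ must be kept small but nonzero, and the $O(\delta^2)$ remainder must be dominated by the linear $\delta$ term. This is harmless for the subsequent application, since one simply fixes a small positive $\delta$ for which $\frac{64}{105}\delta + O(\delta^2) > 0$ before letting $m \to \infty$.
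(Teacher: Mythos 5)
Your proposal is correct and is exactly the paper's argument: the corollary is obtained by adding Lemma \ref{eq_nW} to the preceding lemma, with the $-\frac{\pi}{24}$ and $+\frac{\pi}{24}$ logarithmic coefficients cancelling and the $O(\ep_m^2)$ remainders combining. The paper's own proof is the one-line remark that the result follows immediately from \eqref{eq_nFW} and \eqref{eq_nFW_2}.
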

\begin{proof}
The result immediately follows from \eqref{eq_nFW} and \eqref{eq_nFW_2}.
\end{proof}

\begin{proof}[Completion of the proof of Proposition \ref{prop_van} for $N = 4$]
By taking $\delta > 0$ in Corollary \ref{cor_FW_3} small enough, we infer from \eqref{eq_poho_3}, \eqref{eq_poho_31} and \eqref{eq_poho_32} that
\[O(\ep_m^2) \ge \frac{32}{105} \delta \left|\S^2\right| \ep_m^2 \log(\rho\ep_m^{-1}) \|\pi_m\|^2 + O(\ep_m^2).\]
Accordingly,
\[O\(\frac{1}{|\log\ep_m|}\) \ge \|\pi_m\|^2 + O\(\frac{1}{|\log\ep_m|}\).\]
This implies that \eqref{eq_van} holds for $N = 4$.
\end{proof}

\subsection{Non-negativity of a sum of the second-order derivatives of the trace-free second fundamental form} \label{subsec_nonneg}
To derive Proposition \ref{prop_van}, we analyzed the $\ep_m^2$-order of the asymptotic expansion of the term $\mcp_{1m}\(\wtv_m, \rho\ep_m^{-1}\)$.
We will prove the next result by examining its $\ep_m^3 |\log \ep_m|$-order.
\begin{prop}\label{prop_nonneg}
Suppose that $N = 5$ and $y_m \to y_0 \in \pa M$ is an isolated simple blow-up point of the sequence $\{U_m\}_{m \in \N}$ of the solutions to \eqref{eq_Yamabe_3}.
If $\{\tig_m\}_{m \in \N}$ is a sequence of the metrics constructed in Subsection \ref{subsec_blow}, then
\begin{equation}\label{eq_nonneg}
\pi[\tig_0]_{ij,ij}(y_0) \ge 0.
\end{equation}
\end{prop}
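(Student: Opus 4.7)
The plan is to extend the asymptotic analysis carried out for Proposition \ref{prop_van} in the case $N = 5$ one order further, tracking the coefficient of $\ep_m^3|\log\ep_m|$ instead of the leading $\ep_m^2$-coefficient in the expansion of the Pohozaev identity \eqref{eq_poho_3}. Since \eqref{eq_poho_31} and \eqref{eq_poho_32} already give
\begin{equation*}
\mcp_{1m}\(\wtv_m, \rho\ep_m^{-1}\) \le \mcp\(\wtv_m, \rho\ep_m^{-1}\) = O(\ep_m^3),
\end{equation*}
while $\ep_m^3|\log\ep_m|$ dominates $\ep_m^3$ as $m \to \infty$, any coefficient of $\ep_m^3|\log\ep_m|$ in $\mcp_{1m}$ is forced to be non-positive. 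The goal is to extract this coefficient in a form proportional to $-\pi[\tig_m]_{ij,ij}(y_m)$ with an explicit positive constant.

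The natural place where $\pi_{ij,ij}$ arises is the boundary integral $\ep_m\int H[\tig_m](\ep_m \cdot)[\cdots]\wtv_m\, d\bx$ in \eqref{eq_mcp_1}. By Lemma \ref{lemma_conf}, $H[\tig_m](y_m) = \nabla H[\tig_m](y_m) = 0$, so
\begin{equation*}
H[\tig_m](\ep_m \bx) = \tfrac{\ep_m^2}{2}\,\pa_{ij}H[\tig_m](y_m)\,\bx_i\bx_j + O(\ep_m^3|\bx|^3),
\end{equation*}
and the contracted Codazzi--Mainardi identity relates $\Delta_{h_m} H[\tig_m](y_m)$ to $\pi[\tig_m]_{ij,ij}(y_m)$ modulo components of the ambient Ricci tensor that are lower-order under the normalization of Lemma \ref{lemma_conf} and the smallness $\|\pi_m\| = o(1)$ coming from Proposition \ref{prop_van}. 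Because $w_{1,0}^2 \sim |\bx|^{-6}$ at infinity in $n = 4$ dimensions, the integral $\int |\bx|^2 w_{1,0}^2\, d\bx$ over $B^n(0,\rho\ep_m^{-1})$ diverges logarithmically, producing a contribution to $\mcp_{1m}$ of order $\ep_m^3|\log\ep_m|$ with coefficient proportional to $\pi_{ij,ij}(y_m)$. A parallel analysis of the interior integral, starting from the cubic-in-$x$ pieces of $A_{ij}$ in Lemma \ref{lemma_metric} (especially $-\tni_{ij,kl}x_kx_lx_N$), confirms the same logarithmic behaviour at $N = 5$ and, after symmetrization, feeds into the same $\pi_{ij,ij}$-coefficient.

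In expanding $\mcp_{1m}$, many a priori $\ep_m^3|\log\ep_m|$-order terms cancel by parity in $\bx$, by $\textnormal{trace}(\pi_m) = 0$, and by the decay and orthogonality properties of $\Psi_m$ coming from Proposition \ref{prop_lin} and the splitting $\Psi_m = \Phi_m + \Xi_m$ of Lemmas \ref{lemma_Phi} and \ref{lemma_U}. The quadratic-in-$\pi_m$ contributions (from the $\pi_{is}\pi_{sj}x_N^2$ term of Lemma \ref{lemma_metric} and from self-interactions of $\Psi_m$) sit at $O(\ep_m^2\|\pi_m\|^2) = O(\ep_m^3|\log\ep_m|)$; to separate their coefficient from that of $\pi_{ij,ij}$, I would refine the computation of $F_m(W_{1,0}, W_{1,0}) + F_m(W_{1,0}, \Psi_m) + F_m(\Psi_m, W_{1,0})$ in Corollary \ref{cor_FW} one order further, and then absorb the $\ep_m^2\|\pi_m\|^2$ piece into the right-hand side using Proposition \ref{prop_van} itself. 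Combining everything yields an inequality of the form
\begin{equation*}
c_0\,\pi[\tig_m]_{ij,ij}(y_m)\,\ep_m^3|\log\ep_m| \le O(\ep_m^3) + o(\ep_m^3|\log\ep_m|)
\end{equation*}
with $c_0 > 0$, and dividing by $\ep_m^3|\log\ep_m|$, letting $m \to \infty$, and using $\tig_m \to \tig_0$ in $C^4$ together with $y_m \to y_0$, gives \eqref{eq_nonneg}.

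The main obstacle is precisely this clean separation between the $\pi_{ij,ij}$-term and the $\|\pi_m\|^2$-contribution at the same order $\ep_m^3|\log\ep_m|$: the parameters $a_1, a_2$ tuned in Corollary \ref{cor_FW} to optimize the leading $\|\pi_m\|^2$-coefficient need not be those that produce a clean sign for the $\pi_{ij,ij}$-coefficient, and it may be necessary to retune them (at the mild cost of a slightly weaker leading-order bound on $\|\pi_m\|^2$, which is still enough by the flexibility of Proposition \ref{prop_van}). A secondary obstacle is the proliferation of logarithmic integrals at $N = 5$ coming from the interaction of $\Psi_m$ with the cubic-in-$x$ terms of the metric; these must be handled with the same care used in the proof of Lemma \ref{eq_nW} for $N = 4$, but now in the half-space $\R^5_+$ rather than $\R^4_+$.
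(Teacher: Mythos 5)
Your overall strategy coincides with the paper's: push the expansion of the Pohozaev identity \eqref{eq_poho_3} to order $\ep_m^3|\log\ep_m|$, observe that \eqref{eq_poho_31}--\eqref{eq_poho_32} force the coefficient of $\ep_m^3\log(\rho\ep_m^{-1})$ in $\mcp_{1m}$ to be non-positive, and identify that coefficient as a negative multiple of $\pi[\tig_m]_{ij,ij}(y_m)$. However, you have misidentified where that term comes from. In the conformal Fermi gauge of Lemma \ref{lemma_conf} with $\kappa \ge 4$, \emph{all} Taylor coefficients of $H[\tig_m]$ at $y_m$ up to order $\kappa-1$ vanish (this is exactly what is extracted from \eqref{eq_detg_d} in the proof of Lemma \ref{lemma_Green}), so $\pa_{ij}H[\tig_m](y_m)=0$ and the boundary integral in \eqref{eq_mcp_1} is $O(\ep_m^3)+O(\ep_m^{N-2})$ with no logarithm: your proposed main contribution via the Codazzi identity is identically zero in this gauge. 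The entire $\pi_{ij,ij}\,\ep_m^3\log(\rho\ep_m^{-1})$ contribution comes from the interior term $F_m(W_{1,0},W_{1,0})$, through the derivative pieces $-2\tni_{ij,k}x_kx_N$ and $-\tni_{ij,kl}x_kx_lx_N$ of Lemma \ref{lemma_metric} entering $(\Delta-\Delta_{\hg_m})W_{1,0}$ (one also needs $\kappa\ge 4$ to dispose of $R_{NN,N}[\tig_m](y_m)$). You mention this interior contribution only as a ``parallel'' confirmation, whereas it is the whole story. More importantly, the decisive point --- that the resulting coefficient is $-\frac{9}{64}|\S^3|\,(\pi_m)_{ij,ij}$, i.e.\ a \emph{negative} constant times $\pi_{ij,ij}$, so that the inequality closes in the right direction --- is asserted as a goal but never verified; without that sign the argument proves nothing.

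Your worry about separating the $\|\pi_m\|^2$-contribution from the $\pi_{ij,ij}$-contribution is legitimate but resolves more simply than you suggest, with no retuning of $a_1,a_2$: keeping $a_1=-\frac{63}{4}$, $a_2=\frac{105}{8}$, the $\ep_m^2\|\pi_m\|^2$ term enters the lower bound of Corollary \ref{cor_FW} with the \emph{positive} coefficient $\frac{3}{2560}|\S^3|$ and can simply be discarded, while the cross error $O(\ep_m^3|\log\ep_m|\cdot\|\pi_m\|)$ is $O(\ep_m^{7/2}|\log\ep_m|^{3/2})=o(\ep_m^3|\log\ep_m|)$ by \eqref{eq_van}, hence harmless. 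To repair your write-up you should therefore (i) drop the mean-curvature mechanism, (ii) carry out the explicit computation of the two integrals
\begin{equation*}
\int_{B^5_+(0,\rho\ep_m^{-1})} x_5x_kx_l\, \pa_{ij}W_{1,0}\, Z^0_{1,0}\,dx
\quad\text{and}\quad
\int_{B^5_+(0,\rho\ep_m^{-1})} x_5x_k\, \pa_{j}W_{1,0}\, Z^0_{1,0}\,dx,
\end{equation*}
which after symmetrization produce the logarithm and the sign, and (iii) quote \eqref{eq_van} to absorb the remaining errors before dividing by $\ep_m^3|\log\ep_m|$ and passing to the limit.
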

\begin{proof}
Fix any $\rho \in (0, \rho_5]$. By appealing \eqref{eq_van}, one can improve the error in \eqref{eq_mt} so that
\[\mcp_{1m}\(\wtv_m, \rho\ep_m^{-1}\) \\
= F_m(W_{1,0}, W_{1,0}) + \left[ F_m(W_{1,0}, \Psi_m) + F_m(\Psi_m, W_{1,0}) \right] + O(\ep_m^3)\]
where $F_m$ is the map defined in \eqref{eq_Fm}.
From this, \eqref{eq_poho_3}, \eqref{eq_poho_31} and \eqref{eq_poho_32}, we deduce
\begin{equation}\label{eq_nonneg_1}
O(\ep_m^3) \ge F_m(W_{1,0}, W_{1,0}) + \left[ F_m(W_{1,0}, \Psi_m) + F_m(\Psi_m, W_{1,0}) \right] + O(\ep_m^3).
\end{equation}
Moreover, arguing as in the proof of Lemma \ref{eq_nW}, we see
\begin{align*}
&\ F_m(W_{1,0},W_{1,0}) + \frac{1}{64} \left|\S^3\right| \ep_m^2 \|\pi_m\|^2 \\
&= - \ep_m^3 (\pi_m)_{ij,kl} \int_{B^5_+(0,\rho\ep_m^{-1})} x_5x_kx_l \pa_{ij} W_{1,0} Z_{1,0}^0 dx \\
&\ - 2 \ep_m^3 (\pi_m)_{ij,ik} \int_{B^5_+(0,\rho\ep_m^{-1})} x_5x_k \pa_j W_{1,0} Z_{1,0}^0 dx + O(\ep_m^3) \\
&= - \frac{15}{8} \ep_m^3 (\pi_m)_{ij,ij} \int_0^{\rho\ep_m^{-1}} \int_{\R^4} \frac{x_5|\bx|^4 (1-|\bx|^2-x_5^2)}{(|\bx|^2+(x_5+1)^2)^6} dx \\
&\ + \frac{9}{4} \ep_m^3 (\pi_m)_{ij,ij} \int_0^{\rho\ep_m^{-1}} \int_{\R^4} \frac{x_5 |\bx|^2 (1-|\bx|^2-x_5^2)}{(|\bx|^2+(x_5+1)^2)^5} dx + O(\ep_m^3) \\
&= -\frac{9}{64} \left|\S^3\right| \ep_m^3 \log(\rho \ep_m^{-1}) (\pi_m)_{ij,ij} + O(\ep_m^3).
\end{align*}
To deduce each equality, we took $\kappa \ge 4$ in Lemma \ref{lemma_conf} so that $R_{NN,N}[\tig_m](y_m) = 0$, and used Lemma \ref{lemma_metric}, the symmetry of the integral and \eqref{eq_van}.
After setting $a_1 = -\frac{63}{4}$ and $a_2 = \frac{105}{8}$ as in the proof of Corollary \ref{cor_FW} and applying \eqref{eq_van} once more, we arrive at
\begin{align*}
&\ F_m(W_{1,0}, W_{1,0}) + \left[ F_m(W_{1,0}, \Psi_m) + F_m(\Psi_m, W_{1,0}) \right] \\
&\ge \frac{3}{2560} \left|\S^3\right| \ep_m^2 \|\pi_m\|^2 -\frac{9}{64} \left|\S^3\right| \ep_m^3 \log(\rho \ep_m^{-1}) (\pi_m)_{ij,ij} + O(\ep_m^3) + O(\ep_m^3 |\log \ep_m| \cdot \|\pi_m\|) \\
&\ge -\frac{9}{64} \left|\S^3\right| \ep_m^3 \log(\rho \ep_m^{-1}) (\pi_m)_{ij,ij} + O(\ep_m^3) + O\(\ep_m^{7 \over 2} |\log \ep_m|^{3 \over 2}\).
\end{align*}
Inserting this estimate to \eqref{eq_nonneg_1}, we obtain
\[O\(\frac{1}{|\log\ep_m|}\) \ge -\frac{9}{64} \left|\S^3\right| (\pi_m)_{ij,ij} + O\(\frac{1}{|\log\ep_m|}\) + O\(\ep_m^{1 \over 2} |\log \ep_m|^{3 \over 2}\).\]
Taking $m \to \infty$ on the both sides, we obtain \eqref{eq_nonneg}.
\end{proof}

\section{Local sign restriction and set of blow-up points}\label{sec_lsr}
Under the validity of Proposition \ref{prop_van}, we derive the local sign restriction of the function $\mcp'$.
\begin{prop}\label{prop_lsr}
Assume that $N \ge 4$ and $y_m \to y_0 \in \pa M$ is an isolated simple blow-up point for the sequence $\{U_m\}_{m \in \N}$ to the solutions to \eqref{eq_Yamabe_3}.
Then, given $m \in \N$ large and $\rho > 0$ small, there exist constants $\mcc_0 \ge 0$ and $\mcc_1,\, \mcc_2,\, \mcc_3 > 0$ independent of $m$ and $\rho$ such that
\[\ep_m^{N-2+o(1)} \mcp'\(\wtu_m(0)\wtu_m, \rho\) \ge \ep_m^2 \mcc_0 - \ep_m^{2+\eta} \rho^{2-\eta} \mcc_1 - \ep_m^{N-2} \rho^{-N+3} \mcc_2
- {\ep_m^{N-1} \rho^{N-1} \mcc_3 \over \ep_m^{2(N-1)+o(1)} + \rho^{2(N-1)+o(1)}}\]
for $N \ge 5$ and
\[\ep_m^{2+o(1)} \mcp'\(\wtu_m(0)\wtu_m, \rho\) \ge \ep_m^2 \log(1+\rho\ep_m^{-1})\, \mcc_0 - \ep_m^2 \mcc_1
- {\ep_m^3 \rho^3 \mcc_2 \over \ep_m^{6+o(1)} + \rho^{6+o(1)}}\]
for $N = 4$, in $\tig_m$-Fermi coordinates centered in $y_m$.
Here, $\mcp'$ is the function defined in \eqref{eq_poho_0}, $\eta > 0$ is an arbitrarily small number and $\ep_m^{o(1)} \to 1$ as $m \to \infty$.
\end{prop}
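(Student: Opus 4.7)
The plan is to apply the Pohozaev identity \eqref{eq_poho_3} in blow-up coordinates and to track $\ep_m$- and $\rho$-orders carefully, following the framework of Section \ref{sec_van}. By the quadratic homogeneity of $\mcp'$ in its first argument and its scaling $\mcp'(U(\lambda \cdot), R/\lambda) = \lambda^{2-N}\mcp'(U, R)$, together with $\wtu_m(0) = M_m$ and $M_m^2 = \ep_m^{-(N-2)+o(1)}$ from \eqref{eq_M_m_est}, the claim reduces to a lower bound
\[\mcp'(\wtv_m, \rho\ep_m^{-1}) \ge \ep_m^2 \mcc_0 - (\text{error terms}),\]
where $\wtv_m$ is the rescaled solution \eqref{eq_wtv}. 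Indeed, the identity $\ep_m^{N-2+o(1)}\mcp'(\wtu_m(0)\wtu_m,\rho) = (1+o(1))\mcp'(\wtv_m, \rho\ep_m^{-1})$ follows directly from these two ingredients.

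Combining \eqref{eq_poho_1} with \eqref{eq_poho_3} applied to $\wtv_m$ yields the three-term decomposition
\[\mcp'(\wtv_m, \rho\ep_m^{-1}) = \mcp_{1m}(\wtv_m,\rho\ep_m^{-1}) + \frac{\delta_m}{p_m+1}\mcp_{2m}(\wtv_m,\rho\ep_m^{-1}) - \mathcal{B}_m(\rho),\]
where $\mathcal{B}_m(\rho)$ is the nonlinear boundary integral over $\pa B^n(0,\rho\ep_m^{-1})$. For $\rho$ small the second summand is non-negative by \eqref{eq_poho_32} and may be dropped. Feeding the rescaled pointwise decay $\wtv_m(\by) \le C(1+|\by|)^{-(N-2)+o(1)}$ from Proposition \ref{prop_iso} into $\mathcal{B}_m(\rho)$ and computing directly produces precisely the $\mcc_3$ error term.

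The crux is the lower bound on $\mcp_{1m}(\wtv_m,\rho\ep_m^{-1})$, for which I would reuse and extend Section \ref{sec_van}. Using the refined expansion \eqref{eq_V_m_est}, I substitute $\wtv_m = W_{1,0} + \Psi_m + O\bigl(\ep_m^2(1+|y|)^{-(N-4)}\bigr)$ into the definition \eqref{eq_mcp_1}; the leading piece becomes $F_m(W_{1,0},W_{1,0}) + F_m(W_{1,0},\Psi_m) + F_m(\Psi_m,W_{1,0})$ integrated over $B^N_+(0,\rho\ep_m^{-1})$, with $F_m$ as in \eqref{eq_Fm}. A lower bound is furnished by Corollary \ref{cor_FW} for $N=5$, its analogue for $N=6$, and Corollary \ref{cor_FW_3} for $N=4$. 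Invoking Proposition \ref{prop_van} (and, for $N=5$, Proposition \ref{prop_nonneg}) to handle the $\|\pi_m\|^2$- and $(\pi_m)_{ij,ij}$-terms yields the main non-negative piece $\ep_m^2\mcc_0$ (respectively $\ep_m^2\log(1+\rho\ep_m^{-1})\mcc_0$ when $N=4$). The $\rho$-dependent errors arise from the quadratic curvature correction $R_{ikjl}[h]x_kx_l$ in Lemma \ref{lemma_metric}, which survives the vanishing of $\pi(y_0)$, together with the $O\bigl(\ep_m^2(1+|y|)^{-(N-4)}\bigr)$ tail in the bubble expansion; a H\"older-type interpolation produces the $\mcc_1$ and $\mcc_2$ error terms.

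The hardest part will be the bookkeeping in the previous paragraph: the $|x|^2$ curvature correction is largest near $|x|\sim\rho$ while the bubble-expansion error $\ep_m^2(1+|y|)^{-(N-4)}$ dominates in the intermediate annulus, and balancing these competing contributions is what forces the interpolation exponent $\eta > 0$ and the associated $\rho^{2-\eta}$ factor. Equally delicate, and peculiar to $N=5$, is the need for Proposition \ref{prop_nonneg}: without it, the $\ep_m^3|\log\ep_m|$-order contribution of $(\pi_m)_{ij,ij}$ would carry the wrong sign and would preclude the non-negativity of $\mcc_0$. For $N=4$, the whole scheme carries through verbatim, with logarithmic modifications analogous to those in \eqref{eq_nFW} and Corollary \ref{cor_FW_3}.
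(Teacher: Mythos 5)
Your architecture is the right one and matches what the paper actually does (the paper proves this proposition by deferring to Lemma 6.1 of \cite{KMW} and Theorem 7.2 of \cite{Al}): the scaling identity $\ep_m^{N-2+o(1)}\mcp'(\wtu_m(0)\wtu_m,\rho) = (1+o(1))\,\mcp'(\wtv_m,\rho\ep_m^{-1})$ via \eqref{eq_M_m_est}, the Pohozaev decomposition \eqref{eq_poho_3}, discarding the non-negative $\mcp_{2m}$-term, extracting $\mcc_3$ from the nonlinear boundary flux via \eqref{eq_U_m_est}, and lower-bounding $\mcp_{1m}$ through the $F_m$-expansions of Section \ref{sec_van} together with \eqref{eq_V_m_est}. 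That skeleton is correct.

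There is, however, a genuine error in your account of where $\mcc_0 \ge 0$ comes from and, in particular, in your claimed need for Proposition \ref{prop_nonneg} when $N=5$. The constant $\mcc_0$ is the coefficient of the $\ep_m^2$-order term, namely (a positive multiple of) $\|\pi[\tig_0](y_0)\|^2$; its non-negativity is automatic from the positivity of the numerical coefficients in Corollaries \ref{cor_FW} and \ref{cor_FW_3} and from $\|\pi_m\|^2 \ge 0$ --- it does not come from Proposition \ref{prop_van}, whose role is the opposite: it forces $\mcc_0=0$, which is exactly why the paper states $\mcc_0 \ge 0$ rather than $\mcc_0>0$ as in \cite{KMW}. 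The $(\pi_m)_{ij,ij}$-contribution you worry about enters at order $\ep_m^3\log(\rho\ep_m^{-1})$, i.e.\ strictly below the $\ep_m^2$-order of $\mcc_0$, so it cannot ``preclude the non-negativity of $\mcc_0$''; it is simply an error term. Moreover it is absorbed by the stated errors without any sign information: for $N=5$ the right-hand side $\mcc_1\ep_m^{2+\eta}\rho^{2-\eta}+\mcc_2\ep_m^3\rho^{-2}$ has minimum over $\rho$ of order $\ep_m^{5/2+O(\eta)}$, which dominates $\ep_m^3|\log\ep_m|$ uniformly, so $\pm C\ep_m^3\log(\rho\ep_m^{-1})$ fits into $\mcc_1,\mcc_2$ regardless of the sign of $(\pi_m)_{ij,ij}$. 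Proposition \ref{prop_nonneg} is needed later, in Corollary \ref{cor_mass} and the mass argument of Section \ref{sec_cpt}, not here; building it into the proof of Proposition \ref{prop_lsr} is harmless but reflects a miscounting of orders that is precisely the bookkeeping this proposition is about. Aside from this, your attribution of the $\mcc_1$, $\mcc_2$ errors is only loosely correct ($\mcc_2$ arises chiefly from the boundary terms at $|x|=\rho\ep_m^{-1}$ produced by the integrations by parts and from the region where only the Green's-function bound \eqref{eq_U_m_est} is available, rather than from a H\"older interpolation), but that is a matter of detail rather than of substance.
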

\begin{proof}
If $N \ge 5$, the proof follows the same lines as that of Lemma 6.1 in \cite{KMW}; cf. Theorem 7.2 of \cite{Al}.
Slightly modifying the argument, one can also establish the inequality for $N = 4$.
Here we allow the possibility that $\pi[\tig_0](y_0) = 0$ as opposed to \cite{KMW}. Thus we cannot exclude that $\mcc_0 = 0$.
\end{proof}

From the previous proposition, we conclude the following results. It can be derived as in Section 6 of \cite{KMW}.
\begin{lemma}
Assume that $N \ge 4$, and $y_0 \in \pa M$ is an isolated blow-up point for the sequence $\{U_m\}_{m \in \N}$ to \eqref{eq_Yamabe_3}.
Then it is an isolated simple blow-up point of $\{U_m\}_{m \in \N}$. \end{lemma}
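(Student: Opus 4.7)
The plan is to follow the classical Schoen--Li--Zhu rescaling argument in the boundary version developed in Section~6 of \cite{KMW}. Arguing by contradiction, I would suppose that $y_0 \in \pa M$ is an isolated blow-up point of $\{U_m\}_{m \in \N}$ that fails to be isolated simple. By Corollary~\ref{cor_i_blow}(2), $\ou_m$ is strictly decreasing just to the right of $R_m \ep_m$, so the failure of simplicity means that $\ou_m$ possesses a smallest critical point $r_m > R_m \ep_m$ in $(0, \rho_3)$, and moreover $r_m \to 0$ while $r_m / \ep_m \to \infty$ as $m \to \infty$.

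Next I would introduce the rescaled profiles
\[
V_m(x) := r_m^{1/(p_m - 1)} \wtu_m(r_m x)
\]
in $\tig_m$-Fermi coordinates centered at $y_m$. From the upper bound \eqref{eq_U_m_est}, the lower bound in terms of $G_m$, and \eqref{eq_M_m_est} of Proposition~\ref{prop_iso}, the family $\{V_m\}$ is uniformly bounded on compact subsets of $\overline{\R^N_+} \setminus \{0\}$ and is comparable from below to $|x|^{2-N}$ on the same set. Since $r_m / \ep_m \to \infty$, $V_m$ solves an equation whose coefficients tend to those of the model problem. Standard elliptic theory then yields a subsequential limit $V \in C^2(\overline{\R^N_+} \setminus \{0\})$ with
\[
-\Delta V = 0 \text{ in } \R^N_+, \qquad -\frac{\pa V}{\pa x_N} = (N-2)\, V^{N/(N-2)} \text{ on } \R^n \setminus \{0\},
\]
satisfying $V(x) \sim a |x|^{2-N}$ near the origin for some $a > 0$ and such that the weighted spherical average of $V$ has a critical point at radius one (inherited from $r_m$ being a critical point of $\ou_m$).

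Because this critical point is in addition to the one coming from the bubble at the scale $\ep_m / r_m \to 0$, the classification of positive singular solutions of the boundary Yamabe equation on $\R^N_+$ (via moving planes as in \cite{LZ, AdQW}) forces $V$ to be rotationally symmetric in $\bx$ and of Fowler/Delaunay type; in particular, $\mcp'(V, \rho)$ is independent of $\rho > 0$. Writing the expansion $V(x) = a |x|^{2-N} + b + o(1)$ near the origin and evaluating $\mcp'(V, \rho)$ as $\rho \to 0^+$, a short direct calculation of the singular/regular interaction yields
\[
\mcp'(V, \rho) \equiv -\alpha_V < 0.
\]

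Finally I would apply Proposition~\ref{prop_lsr} at $\rho = r_m$ and let $m \to \infty$. The change of variables $x \mapsto r_m x$ gives
\[
\mcp'\bigl(\wtu_m(0) \wtu_m, r_m\bigr) = \wtu_m(0)^2\, r_m^{2-N}\, \mcp'(V_m, 1),
\]
and the convergence $V_m \to V$ yields
\[
\wtu_m(0)^{-2}\, r_m^{N-2}\, \mcp'\bigl(\wtu_m(0) \wtu_m, r_m\bigr) \longrightarrow -\alpha_V < 0,
\]
while the conditions $\ep_m / r_m \to 0$, $r_m \to 0$ and the non-negativity of $\mcc_0$ ensure that the right-hand side of Proposition~\ref{prop_lsr}, after division by the matching positive factor $\ep_m^{N-2+o(1)} r_m^{N-2} \wtu_m(0)^{-2}$, is asymptotically non-negative. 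The resulting inequality $0 \le -\alpha_V$ contradicts the local sign restriction, forcing $y_0$ to be isolated simple. The case $N = 4$ is handled identically, with the logarithmic factor $\log(1 + \rho \ep_m^{-1})$ playing the role of the divergent power.

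The main obstacle is to pin down the sign $\mcp'(V, \cdot) < 0$ for the limiting radial singular solution: this requires invoking the classification of Fowler-type solutions on the half-space and performing a careful computation of the leading singular/regular interaction at the origin. Once that sign is established, the remaining bookkeeping of powers of $\ep_m$ and $r_m$ in Proposition~\ref{prop_lsr} at $\rho = r_m$, which must make all three correction terms negligible compared to the limiting negative quantity, is routine.
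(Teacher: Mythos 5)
Your overall skeleton — contradiction, selection of a second critical radius $r_m$ of $\ou_m$ with $r_m \to 0$ and $r_m/\ep_m \to \infty$, rescaling at scale $r_m$, passing to a singular limit, and contradicting the Pohozaev sign restriction of Proposition \ref{prop_lsr} — is exactly the route the paper intends (it simply refers to Section 6 of \cite{KMW} for this argument). However, there is a genuine gap in your identification of the blow-up limit, and it is not a removable technicality. With $V_m(x) = r_m^{1/(p_m-1)}\wtu_m(r_m x)$ one has $V_m(0) = (r_m/\ep_m)^{1/(p_m-1)} \to \infty$, and the whole point of choosing $r_m$ as the \emph{second} critical point is that $0$ becomes an isolated \emph{simple} blow-up point for $\{V_m\}$ on $B^N_+(0,1)$; Proposition \ref{prop_iso} applied to this rescaled sequence then gives $V_m(x) \le C\, V_m(0)^{-1}|x|^{2-N} \to 0$ locally uniformly on $\overline{\R^N_+}\setminus\{0\}$. (Your claimed lower bound "comparable from below to $|x|^{2-N}$" also degenerates: it carries the factor $(\ep_m/r_m)^{(N-2)/2} \to 0$.) So $V_m$ does \emph{not} converge to a positive singular solution of the nonlinear boundary problem; the nontrivial limit is obtained only after renormalizing, $V_m(0)V_m \to h$, and in that limit the boundary nonlinearity scales out (the equation for $W_m = V_m(0)V_m$ has boundary term $(N-2)V_m(0)^{1-p_m}W_m^{p_m}$ with $V_m(0)^{1-p_m}\to 0$). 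The limit $h$ is harmonic with vanishing Neumann data on $\R^n\setminus\{0\}$, and Bôcher/Liouville (after even reflection) gives $h = a(|x|^{2-N} + b)$ with $a>0$; the criticality of $\ou_m$ at $r_m$ passes to the limit as $\frac{d}{dr}\big[r^{(N-2)/2}(ar^{2-N}+ab)\big]\big|_{r=1}=0$, forcing $b=1>0$. The negativity $\lim_{\rho\to0}\mcp'(h,\rho) = -c_N a^2 b <0$ then comes from the elementary cross term between the singular and constant parts, and this is what contradicts Proposition \ref{prop_lsr}.

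Consequently, the step you single out as the "main obstacle" — a classification of Fowler/Delaunay-type singular solutions of the nonlinear boundary Yamabe equation on $\R^N_+$ — is both unnecessary and unavailable at the level of generality you would need; no such classification is invoked in \cite{KMW} or \cite{AdQW}, and replacing the elementary linear Liouville step by it would leave the proof incomplete. The final bookkeeping you describe (applying Proposition \ref{prop_lsr} to the rescaled sequence, whose own parameter is $\hat\ep_m = \ep_m/r_m \to 0$, and letting first $m\to\infty$ and then $\rho\to0$ so that all correction terms vanish against the strictly negative limit) is fine once the limit object is corrected.
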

\begin{prop}\label{prop_sim}
Assume the hypotheses of Theorem \ref{thm_main}.
Let $\vep_0, \vep_1,\, R,\, C_0$ and $C_1$ be positive numbers in the statement of Proposition \ref{prop_blow_a}.
Suppose that $U \in H^1(M)$ is a solution to \eqref{eq_Yamabe_3} and $\{y_1, \cdots, y_{\mcn}\}$ is the set of its local maxima on $\pa M$.
Then there exists a constant $C_2 > 0$ depending only on $(M,g)$, $N$, $\vep_0$, $\vep_1$ and $R$ such that if $\max_{\pa M} U \ge C_0$,
then $d_h(y_{m_1}, y_{m_2}) \ge C_2$ for all $1 \le m_1 \ne m_2 \le \mcn(U)$.
In particular, the set of blow-up points of $\{U_m\}_{m \in \N}$ is finite and it consists of isolated simple blow-up points.
\end{prop}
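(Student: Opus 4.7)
The plan is to argue by contradiction, in the spirit of the Schoen--Li--Zhu rescaling scheme as adapted to the boundary Yamabe problem in Proposition 8.1 of \cite{Al} and Section 6 of \cite{KMW}. Suppose no such $C_2 > 0$ exists. Then one can extract a sequence $\{U_m\}_{m \in \N}$ of solutions of \eqref{eq_Yamabe_3} and pairs of distinct local maxima $y_m^{(1)}, y_m^{(2)} \in \pa M$ of $U_m|_{\pa M}$ with $d_{g_m}(y_m^{(1)}, y_m^{(2)}) \to 0$. After relabeling we may assume $U_m(y_m^{(1)}) \ge U_m(y_m^{(2)})$, and a subsequence converges to a common $y_0 \in \pa M$. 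Writing $\ep_m = U_m(y_m^{(1)})^{-(p_m-1)}$, the disjointness in Proposition \ref{prop_blow_a}(1) forces $d_{g_m}(y_m^{(1)}, y_m^{(2)}) \ge 2R\ep_m$, so $\ep_m \to 0$ and $y_m^{(1)} \to y_0$ is a genuine blow-up point of $\{U_m\}_{m \in \N}$.

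Next I work in $g_m$-Fermi coordinates centered at $y_m^{(1)}$, rescale by $\ep_m$, and apply Lemma \ref{lemma_conv} to obtain $C^2_{\text{loc}}(\overline{\R^N_+})$-convergence of the rescaled solutions to $W_{1,0}$. Let $\bar{\xi}_m = \ep_m^{-1}(y_m^{(2)} - y_m^{(1)}) \in \R^n$, so $|\bar{\xi}_m| \ge 2R$ by Proposition \ref{prop_blow_a}(1). If $\{|\bar{\xi}_m|\}$ is bounded, a subsequential limit $\bar{\xi}_\infty$ satisfies $|\bar{\xi}_\infty| \ge 2R$ and would have to be a local maximum of $w_{1,0}$, contradicting the fact that $w_{1,0}$ attains its unique local maximum at the origin. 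If instead $|\bar{\xi}_m| \to \infty$, I perform a secondary rescaling at $y_m^{(2)}$ with scale $\mu_m := U_m(y_m^{(2)})^{-(p_m-1)} \ge \ep_m$; Proposition \ref{prop_blow_a}(1) again separates the rescaled maxima, while Proposition \ref{prop_blow_a}(3) yields uniform pointwise bounds on the secondary-rescaled solutions on compact sets away from the images of the remaining maxima. Standard elliptic regularity together with the Li--Zhu classification recalled in Subsection \ref{subsec_bubble} then identifies the $C^2_{\text{loc}}$-limit as a bubble $W_{\lambda, \xi}$; iterating this secondary procedure finitely many times (each step replacing $y_m^{(2)}$ by a local maximum closer in the current rescaling), one produces a bubble possessing at least two distinct boundary local maxima, contradicting the uniqueness of the boundary local maximum of a bubble.

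Once $d_h(y_{m_1}, y_{m_2}) \ge C_2$ is established, the remaining assertions are automatic. The compactness of $\pa M$ together with the $C_2/2$-separation bounds $\mcn(U)$ by the corresponding covering number, so blow-up sets are finite. For any blow-up point $y_0 = \lim y_m$, the separation makes $y_m$ the unique element of $\{y_1, \ldots, y_{\mcn(U_m)}\}$ lying in $B_h(y_m, C_2/2)$, and Proposition \ref{prop_blow_a}(3) reduces to the isolated blow-up estimate $U_m(y) \le C_1 d_{g_m}(y, y_m)^{-1/(p_m-1)}$ on that ball. Finally, the lemma stated immediately before Proposition \ref{prop_sim} upgrades isolated to isolated simple. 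I expect the principal obstacle to be the secondary-rescaling case $|\bar{\xi}_m| \to \infty$: tracking how Proposition \ref{prop_blow_a}(3) transforms under each rescaling, verifying removability of the other rescaled-max singularities in the limit, and arranging that the iteration terminates in a bubble carrying two genuine boundary maxima.
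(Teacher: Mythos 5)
There is a genuine gap at the heart of your argument, namely in the case $|\bar{\xi}_m| \to \infty$. When the mutual distance $\sigma_m = d_{g_m}(y_m^{(1)}, y_m^{(2)})$ tends to $0$ but satisfies $\sigma_m/\ep_m \to \infty$ (and likewise for the second scale $\mu_m$), every rescaling you perform -- at $y_m^{(1)}$ with scale $\ep_m$, at $y_m^{(2)}$ with scale $\mu_m$, or any iterate thereof -- produces a limit containing exactly \emph{one} bubble, with the other concentration point escaping to infinity in the rescaled coordinates. You never obtain ``a bubble possessing two distinct boundary local maxima,'' because that configuration is locally indistinguishable, at every scale, from two legitimately separated blow-up points; local classification of entire solutions and elliptic estimates alone cannot distinguish $\sigma_m \to 0$ from $\sigma_m \ge C_2$. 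This is precisely why the paper places Proposition \ref{prop_sim} \emph{after} Proposition \ref{prop_lsr} and the lemma on isolated simple blow-up: the uniform separation is not a consequence of Proposition \ref{prop_blow_a} plus rescaling, but of the local Pohozaev sign restriction.

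The standard argument (Section 6 of \cite{KMW}, or Proposition 8.1 of \cite{Al}, following Li--Zhu and Schoen) runs as follows. Let $\sigma_m$ be the \emph{minimal} distance between two local maxima and suppose $\sigma_m \to 0$. Rescale by $\sigma_m$, so the rescaled solutions $V_m$ have local maxima at $0$ and at $e_m$ with $|e_m| = 1$ and all other maxima at distance $\ge 1$. Minimality together with Proposition \ref{prop_blow_a}(3) shows $0$ is an isolated blow-up point of $\{V_m\}$, hence isolated simple by the preceding lemma; Proposition \ref{prop_iso} then gives $V_m(0) V_m \to h$ in $C^2_{\mathrm{loc}}$ away from the singular set, where $h = a|x|^{2-N} + b\,|x - e|^{2-N} + (\text{regular part})$ with $a, b > 0$ (the coefficient $b > 0$ records the second concentration point at unit distance). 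A direct computation with \eqref{eq_poho_0} shows $\lim_{\rho \to 0} \mcp'(h, \rho) < 0$ because of the positive second pole, contradicting the sign restriction $\liminf_{\rho \to 0} \mcp' \ge 0$ furnished by Proposition \ref{prop_lsr}. Your bounded case $|\bar{\xi}_m| \le C$ and your final paragraph (finiteness from compactness of $\pa M$, isolatedness from the separation plus Proposition \ref{prop_blow_a}(3), and simplicity from the preceding lemma) are fine, but without the Pohozaev step the main claim $d_h(y_{m_1}, y_{m_2}) \ge C_2$ is not established.
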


\section{The compactness result}\label{sec_cpt}
Let $G_{y_0}$ be the normalized Green's function of the conformal Laplacian on $(M, \tig_0)$ with Neumann boundary condition with pole at $y_0 \in \pa M$, that is, the solution of
\begin{equation}\label{eq_Green}
\begin{cases}
L_{\tig_0} G_{y_0}(y) = 0 &\text{in } M, \\
B_{\tig_0} G_{y_0}(y) = \delta_{y_0} &\text{on } \pa M, \\
\lim\limits_{d_{\tig_0}(y,y_0) \to 0} d_{\tig_0}(y,y_0)^{N-2} G_{y_0}(y) = 1
\end{cases}
\end{equation}
where $\delta_{y_0}$ is the Dirac measure centered at $y_0$.
It will serve as the function $G$ when we apply the positive mass theorem (described in Lemma \ref{lemma_pmt}).

In the following two lemmas, we verify the necessary conditions to apply Lemma \ref{lemma_pmt} for 4- and 5-manifolds.
Note that the number $d$ in \eqref{eq_g_ab} is 1.
\begin{lemma}\label{lemma_exp}
Suppose that $N = 4$ or $5$, $y_0 \in \pa M$ is an isolated simple blow-up point of the sequence $\{U_m\}_{m \in \N}$ of the solutions to \eqref{eq_Yamabe_3}.
If we take $\kappa \ge 4$ in \eqref{eq_kappa}, we can expand the metric $g = \tig_0$ as in \eqref{eq_g_ab} and \eqref{eq_A_ab}.
\end{lemma}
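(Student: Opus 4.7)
The plan is to combine the vanishing of the trace-free second fundamental form at the blow-up point (Proposition \ref{prop_van}) with the conformal gauge of Lemma \ref{lemma_conf} and the explicit Fermi expansion of Lemma \ref{lemma_metric}. Since $d = \lfloor (N-2)/2 \rfloor = 1$ for $N = 4, 5$, the task is to verify, in $\tig_0$-Fermi coordinates centered at $y_0$, that $A_{iN} = A_{NN} = 0$, that $A_{ij}(x) = O(|x|^2)$, and that $\textnormal{trace}(A(x)) = O(|x|^4)$, with the remainder $O(|x|^4)$ in \eqref{eq_g_ab} matching $2d+2 = 4$.

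First, the defining properties of Fermi coordinates give $g_{iN} = 0$ and $g_{NN} = 1$, so $A_{iN} = A_{NN} = 0$ for free. Next, by running the construction of Section \ref{sec_blow} with $\kappa = 4$ in Lemma \ref{lemma_conf}, the limit metric $\tig_0$ satisfies, at $y_0$, the identities $H[\tig_0] = H_{,i}[\tig_0] = R_{ij}[h] = 0$ and $R_{NN}[\tig_0] = -\|\pi[\tig_0]\|^2$, together with the pointwise expansion $\det \tig_0(x) = 1 + O(|x|^4)$ near $y_0$. Applying Proposition \ref{prop_van} in the dimensions $N = 4, 5$ yields $\pi[\tig_0](y_0) = 0$, which, combined with the vanishing of $H$ above, forces $\tni[\tig_0](y_0) = 0$ and $R_{NN}[\tig_0](y_0) = 0$.

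With these vanishings, I would inspect the Fermi expansion of $A_{ij}$ in Lemma \ref{lemma_metric}: the only term linear in $x$ is $-2\, \tni_{ij}[\tig_0](y_0)\, x_N$, which is now zero, while every remaining displayed term is quadratic or higher in $x$. Hence $A_{ij}(x) = O(|x|^2)$, establishing the second and third conditions of \eqref{eq_A_ab}. For the trace condition I would use the Taylor expansion of the determinant,
\[
\det(\delta + A)(x) = 1 + \textnormal{trace}(A)(x) + \tfrac{1}{2}\bigl[(\textnormal{trace}\, A)^2 - \textnormal{trace}(A^2)\bigr](x) + O(|A|^3).
\]
Since $A(x) = O(|x|^2)$ entrywise, every term beyond $\textnormal{trace}(A)$ is $O(|x|^4)$, so $\det \tig_0(x) = 1 + \textnormal{trace}(A(x)) + O(|x|^4)$. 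Comparing with the conformal gauge $\det \tig_0(x) = 1 + O(|x|^4)$ forces $\textnormal{trace}(A(x)) = O(|x|^4)$, completing \eqref{eq_A_ab}.

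The substantive input is Proposition \ref{prop_van}: without the vanishing of $\pi$ at $y_0$ the linear-in-$x_N$ term in $A_{ij}$ would persist, which would ruin both the decay estimate $A_{ij}(x) = O(|x|^2)$ and the simplification of $\det \tig_0$ used for the trace condition. Once that vanishing is available, the rest is direct bookkeeping, matching the Fermi expansion of Lemma \ref{lemma_metric} with the conformal gauge at the threshold $\kappa = 4$; no delicate estimate is required.
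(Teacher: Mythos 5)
Your proposal is correct and follows essentially the same route as the paper: both deduce $A_{iN}=A_{NN}=0$ and $A_{ij}=O(|x|^2)$ from Lemma \ref{lemma_metric} together with Proposition \ref{prop_van} (which kills the linear term $-2\tni_{ij}x_N$), and then extract $\textnormal{trace}(A)=O(|x|^4)$ from the conformal gauge $\det\tig_0=1+O(|x|^\kappa)$ with $\kappa\ge 4$. The only cosmetic difference is that you expand $\det(\delta+A)$ directly while the paper passes through $g=\exp A+O(|x|^4)$ and $\det\exp A=e^{\textnormal{trace}\,A}$; the two computations are equivalent since $A=O(|x|^2)$ makes all quadratic-in-$A$ contributions $O(|x|^4)$.
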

\begin{proof}
By Lemma \ref{lemma_metric} and Proposition \ref{prop_van}, it clearly holds that
\[A_{iN}(x) = A_{NN}(x) = 0 \quad \text{and} \quad A_{ij}(x) = O(|x|^2).\]
Therefore,
\[\exp A(x) = I + A(x) + O(|x|^4) \quad \text{and so} \quad g(x) = \exp A(x) + O(|x|^4)\]
where $I$ is the $N \times N$-identity matrix. From this, we see that
\[\det g(x) = e^{\text{trace} A(x) + O(|x|^4)} = 1 + \text{trace} (A(x)) + O(|x|^4).\]
By virtue of our choice $\kappa \ge 4$, it follows that $\text{trace}(A(x)) = O(|x|^4)$ as desired.
\end{proof}

\begin{lemma}\label{lemma_Green}
Suppose that $N = 4$ or $5$, and $y_0 \in \pa M$ is an isolated simple blow-up point of the sequence $\{U_m\}_{m \in \N}$ of the solutions to \eqref{eq_Yamabe_3}.
If we choose the integer $\kappa$ in \eqref{eq_kappa} large enough, we obtain that
\[G_{y_0}(x) = \begin{cases}
|x|^{-2} + O(|\log|x||) &\text{for } N = 4,\\
|x|^{-3} + O(|x|^{-1}|\log|x||) &\text{for } N = 5
\end{cases}\]
in $\tig_0$-Fermi coordinates centered at $y_0$.
As a particular consequence, $G_{y_0}$ is a smooth positive function on $M \setminus \{y_0\}$ which can be expressed as in \eqref{eq_Green_ex}-\eqref{eq_phi}.
\end{lemma}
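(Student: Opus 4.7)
The plan is to construct the asymptotic expansion of $G_{y_0}$ by a parametrix argument taking the Euclidean fundamental solution $|x|^{2-N}$ as the model singularity. In $\tig_0$-Fermi coordinates centered at $y_0$ this function satisfies $\pa_N |x|^{2-N}\big|_{x_N=0} = -(N-2)x_N|x|^{-N}\big|_{x_N=0} = 0$ and $\Delta |x|^{2-N} = 0$ away from the origin, so it already carries the correct leading singularity, the right Neumann boundary behavior, and the normalization $|x|^{N-2}\cdot|x|^{2-N} = 1$. Existence, smoothness, and positivity of $G_{y_0}$ on $M \setminus \{y_0\}$ follow from the standard theory of the conformal-Laplacian Neumann Green's function on a compact manifold with boundary under $Q(M,\pa M) > 0$, so the task reduces to quantitatively estimating the locally bounded remainder $\phi(x) := G_{y_0}(x) - |x|^{2-N}$.

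To derive the equation for $\phi$, I would invoke Lemma \ref{lemma_exp}: with $d = 1$ we have $A_{iN} = A_{NN} = 0$, $A_{ij}(x) = O(|x|^2)$, and for $\kappa$ chosen large enough in \eqref{eq_kappa} also $\mathrm{trace}(A(x)) = O(|x|^{2d+2})$, making the volume-element contribution negligible. Expanding $\Delta_{\tig_0}$ and applying it to $u = |x|^{2-N}$, the flat part $\Delta u$ cancels and the residual terms are bounded by $|A^{ab}|\cdot|\pa_{ab}u| = O(|x|^2)\cdot O(|x|^{-N}) = O(|x|^{2-N})$. Together with the scalar-curvature contribution $\frac{N-2}{4(N-1)}R[\tig_0]|x|^{2-N} = O(|x|^{2-N})$, this gives $L_{\tig_0}(|x|^{2-N}) =: F(x)$ with $|F(x)| \le C|x|^{2-N}$. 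For the boundary, since $\pa_N|x|^{2-N}$ vanishes on $\{x_N=0\}$ and Lemma \ref{lemma_conf} yields $H[\tig_0](y_0) = H_{,i}[\tig_0](y_0) = 0$, hence $H[\tig_0](\bx) = O(|\bx|^2)$, we obtain $B_{\tig_0}(|x|^{2-N})\big|_{x_N=0} = \frac{N-2}{2}H[\tig_0](\bx)|\bx|^{2-N} =: b(\bx)$ with $|b(\bx)| \le C|\bx|^{4-N}$. Consequently $\phi$ solves $L_{\tig_0}\phi = -F$ in $B^N_+(0,\rho_0)$ and $B_{\tig_0}\phi = -b$ on $B^n(0,\rho_0)$, and is bounded on $\pa_I B^N_+(0,\rho_0)$.

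To extract the pointwise bound I would compare against the flat half-space Neumann Green's kernel, which by the reflection method equals $\sim |x-z|^{2-N}$ for boundary poles, yielding the integral representation
\[|\phi(x)| \lesssim \int_{B^N_+(0,\rho_0)} \frac{|z|^{2-N}}{|x-z|^{N-2}}\,dz + \int_{B^n(0,\rho_0)} \frac{|\bar{z}|^{4-N}}{|x-\bar{z}|^{N-2}}\,d\bar{z} + O(1).\]
Splitting each integral into the three dyadic regions $|z| < |x|/2$, $|x|/2 \le |z| \le 2|x|$, and $|z| > 2|x|$ and estimating directly, all contributions are bounded by $C|x|^{4-N}$, with at worst a logarithmic loss from the borderline critical far-region integral: the interior integral acquires $|\log|x||$ when $N=4$ (from $\int |z|^{-1}\,d|z|$) and the boundary integral acquires $|\log|x||$ when $N=5$ (from $\int |\bar{z}|^{-1}\,d|\bar{z}|$). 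This yields $|\phi(x)| \le C|\log|x||$ for $N=4$ and $|\phi(x)| \le C|x|^{-1}|\log|x||$ for $N=5$, which is exactly the claim and matches \eqref{eq_phi} since $d + 3 - N = 4 - N$ for $d = 1$.

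The main technical obstacle is that $|x-z|^{2-N}$ is only the exact Neumann kernel of the flat Laplacian, whereas we are inverting $L_{\tig_0}$ with $B_{\tig_0}$-data on a geodesic half-ball. This is handled by a standard dyadic-scaling device: on the unit half-annulus one rescales $\phi_\lambda(y) = \lambda^{N-4}\phi(\lambda y)$ with $\lambda$ running over $\{2^{-k}\rho_0\}$, so that the rescaled operator converges to $-\Delta$, the rescaled inhomogeneities stay uniformly $L^\infty$-bounded, and interior/boundary Schauder estimates give $k$-uniform control. Summing across scales then reproduces the critical logarithmic factors computed above, requiring no new ingredient beyond those already employed for the 3-dimensional case in \cite{AdQW}.
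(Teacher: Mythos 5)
Your proposal is correct in substance and rests on exactly the same two geometric inputs as the paper, but it packages the analysis differently. The paper's proof is essentially a citation: it invokes Proposition B.2 of \cite{AS}, which constructs $G_{y_0}$ by parametrices under the hypothesis \eqref{eq_H} that $H[\tig_0]$ vanishes to high order at $y_0$, and then spends its effort verifying that hypothesis (via the identity \eqref{eq_detg_d} relating $\pa_N\sqrt{|\tig_0|}$ to $H$ and the normalization $\det\tig_0 = 1+O(|x|^\kappa)$) and checking that $B_{ij,\alpha}(0)=0$ for $|\alpha|=1$ so that the $|x|^{3-N}$ term in \eqref{eq_G_rem} drops out. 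You instead rederive the expansion from scratch: the same crucial input $A_{ij}=O(|x|^2)$ (i.e.\ Proposition \ref{prop_van} routed through Lemma \ref{lemma_exp}) gives $L_{\tig_0}|x|^{2-N}=O(|x|^{2-N})$, and your direct estimate of the boundary data only needs $H[\tig_0]=O(|\bx|^2)$ from \eqref{eq_conf}, so you bypass the high-order vanishing of $H$ that \cite{AS} requires as a hypothesis; your dyadic integral estimates then reproduce the correct critical logarithms ($N=4$ from the interior source, $N=5$ from the boundary source). Two points deserve care if you were to write this out. First, the step ``interior/boundary Schauder estimates give $k$-uniform control'' is the soft spot: Schauder on a rescaled half-annulus bounds derivatives of $\phi_\lambda$ in terms of $\sup|\phi_\lambda|$ plus the data, but does not by itself bound $\sup|\phi_\lambda|$; one genuinely needs the Green's-representation (or a barrier/iteration across scales) to close the loop, which is precisely the content of \cite[Prop.~B.2]{AS} that the paper imports. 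Second, your expansion of $\Delta_{\tig_0}|x|^{2-N}$ should also record the first-order terms $\pa_a(\tig_0^{ab})\pa_b|x|^{2-N}=O(|x|)\cdot O(|x|^{1-N})$; they are of the same order $O(|x|^{2-N})$ as the second-order terms you display, so the conclusion is unaffected. Neither point is a gap in the mathematics, only in the write-up.
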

\begin{proof}
We will employ Proposition B.2 of \cite{AS}, in which Almaraz and Sun constructed the Green's function on manifolds with boundary using parametrices.

According to their result, if there exists a sufficiently large integer $\kappa_0$ such that
\begin{equation}\label{eq_H}
|H[\tig_0](y)| \le Cd_{\tig_0}(y,y_0)^{\kappa_0} \quad \text{for all } y \in \pa M,
\end{equation}
then one can find a smooth positive solution $G_{y_0}$ on $M \setminus \{y_0\}$ to \eqref{eq_Green} with $g = \tig_0$.
Moreover, if $\tig_0 = \exp B$ for some $2$-tensor $B$ on $M$, then
\begin{multline}\label{eq_G_rem}
\left|G_{y_0}(x) - |x|^{2-N}\right| \\
\le C \sum_{i,j=1}^n \sum_{|\alpha|=1}^d \left|B_{ij,\alpha}(0)\right| |x|^{|\alpha|+2-N} + \begin{cases}
C (1+|\log|x||) &\text{for } N = 3, 4,\\
C |x|^{d+3-N} &\text{for } N \ge 5
\end{cases}
\end{multline}
in $\tig_0$-Fermi coordinates centered at $y_0$, where $d = \lfloor \frac{N-2}{2} \rfloor$ as before.
Check also \eqref{eq_mi} for the notations involving multi-indices.

Differentiating (3.4) of \cite{Es} $|\beta|$-times, we obtain
\begin{equation}\label{eq_detg_d}
\frac{\pa}{\pa x_N} \frac{\pa \sqrt{|\tig_0|}}{\pa x_{\beta}} (\bx,0)
= - n \sum_{\beta'+\beta''=\beta} \frac{\beta!}{\beta'!\beta''!} \(\frac{\pa \sqrt{|\tilde{h}_0|}}{\pa x_{\beta'}} \frac{\pa H[\tig_0]}{\pa x_{\beta''}} \)(\bx)
\quad \text{for } \bx \in \R^n,
\end{equation}
in normal coordinates on $\pa M$ centered at $y_0$. Here $\tilde{h}_0$ is the restriction of $\tig_0$ to $\pa M$.
In light of \eqref{eq_kappa} and \eqref{eq_detg_d}, the coefficient of $x_{\beta} x_N$ in the Taylor expansion of $\sqrt{|\tig_0|}$ at $x = 0$ has to be
\[-\frac{n}{(|\beta|+1)!} \frac{\pa^{\beta}H[\tig_0]}{\pa x_{\beta}} (0) = 0 \quad \text{for all } |\beta| \le \kappa-1.\]
Thus, if we take $\kappa \ge \kappa_0$, all partial derivatives of $H$ of order $\le \kappa_0-1$ must vanish at $0$, and so \eqref{eq_H} holds.

On the other hand, we know that $A(x) = O(|x|^2)$ and $\tig_0 = \exp A + O(|x|^4)$ from the proof of Lemma \ref{lemma_exp}. Therefore, for $|\alpha| = 1$,
\[B_{ij,\alpha}(x) = A_{ij,\alpha}(x) + O(|x|) = O(|x|), \quad \text{and so } B_{ij,\alpha}(0) = 0.\]
This implies that the right-hand side of \eqref{eq_G_rem} is bounded by
\[\begin{cases}
C (|x|^{4-N} + 1 + |\log|x||) = C (1+|\log|x||) = O(|\log|x||) &\text{for } N = 4,\\
C |x|^{4-N} = C |x|^{-1} = O(|x|^{-1} |\log|x||) &\text{for } N = 5.
\end{cases}\]
The proof is finished.
\end{proof}

We next examine the relationship between the flux integral $\mci(y_0,\rho)$ given in \eqref{eq_mci} and the quantity $\mcp'(G_{y_0},\rho)$ defined by \eqref{eq_poho_0}.
\begin{lemma}\label{lemma_exp_2}
Under the assumptions of Lemma \ref{lemma_exp}, it holds that
\begin{equation}\label{eq_PI}
\mcp'(G_{y_0},\rho) = \begin{cases}
- \dfrac{1}{6} \mci(y_0,\rho) + O(\rho) &\text{for } N = 4,\\
- \dfrac{9}{32} \mci(y_0,\rho) - \dfrac{3}{512} \left|\S^3\right| \pi[\tig_0]_{ij,ij}(y_0) + O(\rho|\log\rho|) &\text{for } N = 5.
\end{cases}
\end{equation}
\end{lemma}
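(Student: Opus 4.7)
The plan is to decompose $G_{y_0} = g + \phi$ with $g(x) = |x|^{2-N}$, and to compare $\mcp'(G_{y_0},\rho)$ and $\mci(y_0,\rho)$ piece by piece. First I would verify by direct calculation that $\mcp'(g,\rho) = 0$: on the hemisphere $|x| = \rho$ the fundamental solution has radial gradient with $\pa_\nu g = (N-2)\rho^{1-N}$ and $|\nabla g|^2 = (N-2)^2\rho^{2-2N} = |\pa_\nu g|^2$, so the three integrands in the definition of $\mcp'$ cancel pointwise. For the linear-in-$\phi$ terms I would exploit the radial structure of $\nabla g$ together with $\rho \pa_\nu g = (N-2) g$ on $|x| = \rho$ to collapse the cross part to $\frac{N-2}{2}\int(g\pa_\nu\phi - \phi\pa_\nu g)\,dS_x$. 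Expanding $G_{y_0} = g + \phi$ in the Green-flux integrand of $\mci$ and noting that the $g$-$g$ piece vanishes identically, the Green-flux part of $\mci$ reduces to $-\frac{4(N-1)}{N-2}\int(g\pa_\nu\phi - \phi\pa_\nu g)\,dS_x$ (since $x_a/|x|$ is the outward unit normal, opposite in sign to $\nu$). Matching these two expressions produces the prefactor $-\frac{(N-2)^2}{8(N-1)}$, equal to $-\frac{1}{6}$ for $N=4$ and $-\frac{9}{32}$ for $N=5$, which accounts for the leading constant in the asserted identity.

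Next I would control the purely-$\phi$ contribution. Using the pointwise estimates of Lemma \ref{lemma_Green} together with standard interior Schauder estimates for $\phi = G_{y_0} - |x|^{2-N}$, one obtains gradient bounds $|\nabla\phi| = O(|x|^{1-N}|\log|x||)$ up to logarithmic factors; integrating $\phi\pa_\nu\phi$, $\rho|\nabla\phi|^2$, and $\rho|\pa_\nu\phi|^2$ on the hemisphere of area $O(\rho^{N-1})$ yields contributions of order $O(\rho^2|\log\rho|^2)$ for $N=4$ and $O(\rho|\log\rho|^2)$ for $N=5$, both absorbed in the claimed errors.

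The remaining ingredient is the $A$-term $J = \int_{\pa_I B^N_+(0,\rho)}(\rho^{3-2N}x_a\pa_b A_{ab} - 2N\rho^{1-2N}x_a x_b A_{ab})\,dS_x$, whose overall contribution to $\mcp'(G_{y_0},\rho)$ is $-\frac{(N-2)^2}{8(N-1)}J$. Using Lemma \ref{lemma_metric} with $\kappa \ge 4$, the conformal normalizations of Lemma \ref{lemma_conf} (so that $H$, $H_{,i}$, $H_{,ij}$, $H_{,ijk}$, $R_{ij}[h]$ all vanish at $y_0$ and $R_{NN,N}(y_0) = 0$), and Proposition \ref{prop_van} giving $\pi[\tig_0](y_0) = 0$ (hence $\tni[\tig_0](y_0) = 0$ and $R_{NN}[\tig_0](y_0) = 0$), I would integrate each monomial of $x_i\pa_j A_{ij}$ and $x_ix_j A_{ij}$ over the upper hemisphere. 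Most integrals vanish by parity (odd dependence on a boundary coordinate), and every surviving Riemann contraction at $y_0$ reduces to $R[h](y_0)$ or a trace of Ricci that vanishes. For $N=4$ this forces $J = O(\rho)$, immediately giving the stated identity. For $N=5$ the sole nontrivial surviving contribution comes from the cubic term $-\tni_{ij,kl}[\tig_0]\,x_kx_lx_N$ of $A_{ij}$; using $\tni_{ij,kl}(y_0) = \pi_{ij,kl}(y_0)$ (since $H_{,kl}(y_0)=0$) and $\pi_{ij,ji}(y_0) = \pi_{ij,ij}(y_0)$ (the commutator of covariant derivatives contributes Ricci terms that vanish at $y_0$), together with explicit evaluation of $\int_{\S^4_+}x_ix_lx_N\,dS_x$ and $\int_{\S^4_+}x_ix_jx_kx_lx_N\,dS_x$, I would obtain $J = \frac{1}{48}|\S^3|\,\pi[\tig_0]_{ij,ij}(y_0) + O(\rho|\log\rho|)$; multiplying by $-\frac{9}{32}$ yields the desired $-\frac{3}{512}|\S^3|\,\pi[\tig_0]_{ij,ij}(y_0)$ term.

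The main obstacle is the $N=5$ case: pinning down the exact coefficient $\frac{1}{48}|\S^3|$ requires keeping track of every cubic monomial of $A_{ij}$, verifying that the pieces coming from $R_{iNjN,k}$, $R_{iNjN,N}$, and $R_{ikjl,m}[h]$ all integrate to zero on the upper hemisphere (via parity, the algebraic Bianchi identity, and the choice $\kappa \ge 4$ that forces higher derivatives of $H$ and $R_{ij}[h]$ to vanish at $y_0$), and combining the surviving integrals with the correct spherical moments. This bookkeeping is the most error-prone step but is routine once the parity and curvature vanishings are systematically applied.
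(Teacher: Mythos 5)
Your proposal is correct and follows essentially the same route as the paper: the paper simply cites Lemma 3.2 of \cite{AdQW} for the identity $\mcp'(G_{y_0},\rho) = -\frac{N-2}{2}\int_{\pa_I B^N_+(0,\rho)}(|x|^{2-N}\pa_a G_{y_0} - \pa_a|x|^{2-N}\,G_{y_0})\frac{x_a}{|x|}\,dS_x + O(\rho^{6-N}|\log\rho|)$ that you rederive via the decomposition $G_{y_0}=|x|^{2-N}+\phi$, and then, exactly as you do, absorbs the $A$-term of $\mci$ with prefactor $-\frac{(N-2)^2}{8(N-1)}$ and evaluates it as $\rho^{5-N}|\S^{n-1}|\frac{2(N-3)}{(N-1)(N+1)(N+3)}\pi[\tig_0]_{ij,ij}(y_0)+O(\rho^{6-N})$, matching your $\frac{1}{48}|\S^3|$ for $N=5$. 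Two cosmetic slips: the gradient bound should read $|\nabla\phi|=O(|x|^{d+2-N}|\log|x||)$ rather than $O(|x|^{1-N}|\log|x||)$ (your subsequent numerics already use the correct one), and your quadratic-in-$\phi$ remainder is $O(\rho|\log\rho|^2)$ for $N=5$ rather than the stated $O(\rho|\log\rho|)$, which is harmless since the lemma is only used in the limit $\rho\to 0$.
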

\begin{proof}
Lemma 3.2 of \cite{AdQW} leads us that
\begin{equation}\label{eq_mcp_5}
\begin{aligned}
\mcp'(G_{y_0},\rho) &= -\(\frac{N-2}{2}\) \int_{\pa_I B^N_+(0,\rho)} \(|x|^{2-N} \pa_a G_{y_0}(x) - \pa_a|x|^{2-N} G_{y_0}(x)\) \frac{x_a}{|x|}\, dS_x \\
&\ + O(\rho^{6-N}|\log\rho|).
\end{aligned}
\end{equation}
Therefore, we infer from \eqref{eq_mci} that
\begin{multline}\label{eq_mcp_3}
\mcp'(G_{y_0},\rho) = -\frac{(N-2)^2}{8(N-1)} \left[ \mci(y_0,\rho) \right. \\
\left. + \int_{\pa_I B^N_+(0,\rho)} \(\rho^{3-2N} x_a\pa_b A_{ab}(x) - 2N \rho^{1-2N} x_ax_bA_{ab}(x)\) dS_x \right] + O(\rho^{6-N}|\log\rho|).
\end{multline}

On the other hand, by setting $\kappa \ge 4$ in Lemma \ref{lemma_conf} and applying Proposition \ref{prop_van}, we obtain
\begin{equation}\label{eq_conf_2}
\text{Sym}_{klm}R_{ikjl,m}[\tih_0] = \text{Sym}_{kl} H_{,kl}[\tig_0] = R_{NN,k}[\tig_0] = R_{NN,N}[\tig_0] = 0 \quad \text{at } y_0
\end{equation}
where $\tih_0 = \tig_0|_{T\pa M}$. Thanks to \eqref{eq_A_ab}, \eqref{eq_conf}, \eqref{eq_conf_2}, the Ricci identity
and the symmetry of the integral, we find
\begin{multline}\label{eq_mcp_4}
\int_{\pa_I B^N_+(0,\rho)} \(\rho^{3-2N} x_i\pa_j A_{ij}(x) - 2N \rho^{1-2N} x_ix_jA_{ij}(x)\) dS_x \\
= \rho^{5-N}|\S^{n-1}| \left[\frac{2(N-3)}{(N-1)(N+1)(N+3)}\right] \pi[\tig_0]_{ij,ij}(y_0) + O(\rho^{6-N})
\end{multline}
for any $N \ge 4$.

Combining \eqref{eq_mcp_3} and \eqref{eq_mcp_4}, we derive \eqref{eq_PI}.
\end{proof}
\noindent The above lemma shows that Proposition 3.6 of \cite{AdQW} is valid for $N = 4$, but is not in general for $N = 5$.

As a by-product of the previous lemma, we can evaluate the mass $m_0$.
\begin{cor}\label{cor_mass}
Under the assumptions of Lemma \ref{lemma_exp}, it holds that
\[m_0 = \begin{cases}
-6 \lim\limits_{\rho \to 0} \mcp'(G_{y_0},\rho) &\text{for } N = 4,\\
-\dfrac{32}{9} \lim\limits_{\rho \to 0} \mcp'(G_{y_0},\rho) - \dfrac{1}{48} \left|\S^3\right| \pi[\tig_0]_{ij,ij}(y_0) &\text{for } N = 5.
\end{cases}\]
\end{cor}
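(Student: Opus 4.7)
The plan is to combine Lemma \ref{lemma_exp_2} with the definition of the mass from the positive mass theorem (Lemma \ref{lemma_pmt}) and simply solve for $m_0$. Almost all the work has already been done; the corollary is a rearrangement of identities established in the preceding lemmas.

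First, I would verify that the hypotheses of Lemma \ref{lemma_pmt} are met with $g = \tig_0$ and $G = G_{y_0}$. The metric expansion \eqref{eq_g_ab}--\eqref{eq_A_ab} with $d = \lfloor (N-2)/2\rfloor = 1$ is precisely the content of Lemma \ref{lemma_exp}. The expansion \eqref{eq_Green_ex}--\eqref{eq_phi} for the Green's function with the requisite error $\phi(x) = O(|x|^{d+3-N}|\log|x||)$ is precisely Lemma \ref{lemma_Green}. Therefore, Lemma \ref{lemma_pmt} applies and yields
\[
m_0 \;=\; \lim_{\rho \to 0} \mci(y_0,\rho).
\]

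Next, I would take $\rho \to 0$ in the identity of Lemma \ref{lemma_exp_2}. For $N = 4$ the error term is $O(\rho)$, so
\[
\lim_{\rho \to 0} \mcp'(G_{y_0},\rho) \;=\; -\tfrac{1}{6}\, m_0,
\]
and solving gives $m_0 = -6 \lim_{\rho \to 0} \mcp'(G_{y_0},\rho)$. For $N = 5$ the error term is $O(\rho|\log\rho|) \to 0$, so
\[
\lim_{\rho \to 0} \mcp'(G_{y_0},\rho) \;=\; -\tfrac{9}{32}\, m_0 \;-\; \tfrac{3}{512}\,|\S^3|\,\pi[\tig_0]_{ij,ij}(y_0).
\]
Solving for $m_0$ and using the arithmetic $\tfrac{32}{9}\cdot\tfrac{3}{512} = \tfrac{1}{48}$ gives the stated formula.

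There is no genuine obstacle here; the only thing to double-check is that the constants match up (which they do) and that the $\rho \to 0$ limits of the error terms in Lemma \ref{lemma_exp_2} are indeed zero in each dimension. The conceptual content of the corollary is not in its short proof but rather in the observation made in Remark \ref{rmk_mar}: for $N = 5$ the mass is not simply a global quantity determined by $G_{y_0}$ via the Pohozaev flux, but carries an additional local contribution $\tfrac{1}{48}|\S^3|\pi[\tig_0]_{ij,ij}(y_0)$ coming from the trace-free second fundamental form at the blow-up point, whose non-negativity was the subject of Proposition \ref{prop_nonneg}.
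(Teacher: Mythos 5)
Your proposal is correct and matches the paper's proof, which is exactly the same two-line argument: take $\rho \to 0$ in \eqref{eq_PI} and invoke \eqref{eq_pmt}. The arithmetic $\tfrac{32}{9}\cdot\tfrac{3}{512}=\tfrac{1}{48}$ checks out, and your preliminary verification of the hypotheses of Lemma \ref{lemma_pmt} is a harmless (indeed helpful) elaboration of what the paper leaves implicit.
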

\begin{proof}
Taking $\rho \to 0$ on the both sides of \eqref{eq_PI} and using \eqref{eq_pmt}, we get the result.
\end{proof}
\noindent One can see from \eqref{eq_poho_0} or \eqref{eq_mcp_5} that the value of $\mcp'(G_{y_0},\rho)$ is completely determined by the Green's function $G_{y_0}$.
Therefore, the above corollary tells us that the mass is involved with not only the Green's function but also the trace-free second fundamental form if $N = 5$.
As mentioned in Remark \ref{rmk_mar}, it is a unique property of manifolds with boundary.

\medskip
We are now ready to complete the proof of our main result.
\begin{proof}[Proof of Theorem \ref{thm_main}]
Suppose that $y_0 \in \pa M$ is a blow-up point of of the sequence $\{U_m\}_{m \in \N}$ of the solutions to \eqref{eq_Yamabe_3}.
By Proposition \ref{prop_sim}, it is isolated simple.
By Proposition \ref{prop_iso} and elliptic regularity theory, there also exists a constant $a > 0$ such that
\[U_m(y_m)U_m \to aG_{y_0} \quad \text{in } C^2(B_{\tig_0}(y_0,\rho) \setminus \{y_0\}) \quad \text{as } m \to \infty.\]
Thanks to Proposition \ref{prop_lsr}, it follows that
\begin{equation}\label{eq_mcp_2}
\liminf_{\rho \to 0} \mcp'(aG_{y_0},\rho) = a^2 \liminf_{\rho \to 0} \mcp'(G_{y_0},\rho) \ge 0.
\end{equation}
We split the proof into two cases according to the dimension of the manifold $M$.

\medskip \noindent
\textsc{Case $N = 4$ and $5$:} By virtue of Lemmas \ref{lemma_exp} and \ref{lemma_Green}, all the conditions needed to apply Lemma \ref{lemma_pmt} hold.
Besides, \eqref{eq_Green} yields that $R\big[G_{y_0}^{4 \over N-2}g\big] = 0$ and $H\big[G_{y_0}^{4 \over N-2}g\big] = 0$ on their respective domains, which trivially implies \eqref{eq_pmt_c}.
Employing Lemma \ref{lemma_pmt}, Corollary \ref{cor_mass}, Proposition \ref{prop_nonneg} and \eqref{eq_mcp_2}, we deduce
\[0 < m_0 = \begin{cases}
-6 \lim\limits_{\rho \to 0} \mcp'(G_{y_0},\rho) \le 0 &\text{for } N = 4,\\
-\dfrac{32}{9} \lim\limits_{\rho \to 0} \mcp'(G_{y_0},\rho) - \dfrac{1}{48} \left|\S^3\right| \pi[\tig_0]_{ij,ij}(y_0) \le 0 &\text{for } N = 5,
\end{cases}\]
a contradiction. Consequently, there is no blow-up point of a solution to \eqref{eq_Yamabe_3},
which means that its solution set is $L^{\infty}(M)$-bounded.
Elliptic regularity tells us that it is $C^2(M)$-compact. Theorem \ref{thm_main} must be true in this case.

\medskip \noindent
\textsc{Case $N = 6$:} We remind that the trace-free second fundamental form $\pi[g]$ is assumed to be never zero on $\pa M$.
There is a positive smooth function $\omega_0$ on $M$ such that $\tig_0 = \omega_0 g$ on $M$.
In Proposition 1.2 of \cite{Es4}, it was proved that $\pi[\tig_0] = \sqrt{\omega}\, \pi[g]$ on $\pa M$.
This produce a contradiction, since Proposition \ref{prop_van} reads
\[0 = \|\pi[\tig_0](y_0)\| = \omega(y_0)^{-{1 \over 2}} \|\pi[g](y_0)\| > 0.\]
The same reasoning as above shows that Theorem \ref{thm_main} is also valid in this case.
\end{proof}

\appendix
\section{Proof of Lemmas \ref{lemma_Phi} and \ref{lemma_U}}\label{sec_app_PU}
Throughout this section, we assume that $N \ge 5$. The case $N = 4$ can be handled similarly.

\medskip
In order to prove Lemma \ref{lemma_Phi}, we first need two preliminary observations.
\begin{lemma}\label{lemma_Phi_1}
Suppose that $N \ge 5$. The function
\[\Phi_1(x) = \frac{1}{4(N-4)} \frac{x_N+1}{(|\bx|^2 + (x_N+1)^2)^{N-4 \over 2}} + a_1 \frac{x_N+1}{(|\bx|^2 + (x_N+1)^2)^{N \over 2}} \quad \text{in } \R^N_+\]
for $a_1 \in \R$ satisfies
\[-\Delta \Phi_1 = \frac{x_N+1}{(|\bx|^2 + (x_N+1)^2)^{N-2 \over 2}} \quad \text{in } \R^N_+.\]
\end{lemma}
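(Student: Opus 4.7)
The plan is to verify the identity by a direct computation, splitting $\Phi_1$ into its two pieces and exploiting the fact that the whole expression is built out of functions that are radial with respect to the shifted pole $(0,\ldots,0,-1) \in \R^N \setminus \overline{\R^N_+}$. Set
\[u(x) = x_N + 1, \qquad r(x) = \sqrt{|\bx|^2 + (x_N+1)^2},\]
so that $r$ is the Euclidean distance in $\R^N$ from $x$ to $(0,-1)$, and $\Delta u = 0$, $\nabla u = e_N$, $\nabla r = (\bx, x_N+1)/r$. The key algebraic identity will be
\[\Delta(u F(r)) = u \,\Delta F(r) + 2\,\partial_N F(r),\]
valid for any smooth $F$, which follows from $\Delta u = 0$ and $\partial_N u = 1$. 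For radial $F(r) = r^{-s}$, the standard formula gives $\Delta r^{-s} = s(s-N+2)\, r^{-s-2}$, since $r$ is the Euclidean radius around a fixed point in $\R^N$.

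First I would dispose of the harmonic piece. For $s = N-2$, the formula gives $\Delta r^{-(N-2)} = 0$ on $\R^N_+$, so $r^{-(N-2)}$ is harmonic away from its pole, which lies outside $\overline{\R^N_+}$. Taking $-\partial_N$ of this (which commutes with $\Delta$) and dividing by $N-2$ shows that $(x_N+1)/r^N$ is also harmonic on $\R^N_+$; hence the term $a_1 (x_N+1)/(|\bx|^2 + (x_N+1)^2)^{N/2}$ contributes zero to $\Delta \Phi_1$, which explains why $a_1$ is arbitrary.

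Next I would compute $\Delta(u/r^{N-4})$. Using the boxed identity with $F(r) = r^{-(N-4)}$ and $\partial_N F = -(N-4) u/r^{N-2}$, together with $\Delta r^{-(N-4)} = (N-4)(-2)\, r^{-(N-2)}$ (taking $s = N-4$), one obtains
\[\Delta\!\left(\frac{u}{r^{N-4}}\right) = -2(N-4)\,\frac{u}{r^{N-2}} - 2(N-4)\,\frac{u}{r^{N-2}} = -4(N-4)\,\frac{u}{r^{N-2}}.\]
Dividing by $-4(N-4)$ matches the first summand of $\Phi_1$ to $(x_N+1)/r^{N-2}$, which is the prescribed right-hand side. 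Combining the two pieces gives the claim.

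There is no substantive obstacle here; the only points to be slightly careful about are that the pole of $r$ sits at $(0,-1) \notin \overline{\R^N_+}$, so no distributional contribution arises, and that the restriction $N \ge 5$ is needed so that the coefficient $1/[4(N-4)]$ is well-defined (the case $N = 4$, which is mentioned as treated separately, corresponds to a logarithmic fundamental solution and requires a different ansatz).
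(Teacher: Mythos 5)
Your proof is correct. It takes a somewhat different route from the paper: the paper first rewrites the right-hand side as $-\frac{1}{N-4}\pa_N\bigl[(|\bx|^2+(x_N+1)^2)^{-(N-4)/2}\bigr]$, then constructs an auxiliary radial function $\Phi_0$ solving $-\Delta\Phi_0 = r^{4-N}$ by integrating the radial ODE $-\phi_0''-\frac{N-1}{r}\phi_0'=r^{4-N}$ (which forces a separate case for $N=6$, where a logarithm appears), and finally obtains $\Phi_1=-\frac{1}{N-4}\pa_N\Phi_0$. You instead verify the stated formula directly via the product-rule identity $\Delta(uF(r))=u\,\Delta F(r)+2\,\pa_N F(r)$ with $u=x_N+1$ harmonic and $\nabla u=e_N$, together with $\Delta r^{-s}=s(s-N+2)r^{-s-2}$; both of your computations (the $a_1$-term being harmonic as $-\frac{1}{N-2}\pa_N$ of the harmonic function $r^{2-N}$, and $\Delta(u\,r^{4-N})=-4(N-4)u\,r^{2-N}$) check out. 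Your version is more elementary, avoids the $N=6$ case distinction entirely since the final formula is uniform in $N\ge 5$, and correctly notes that the pole $(0,-1)$ lies outside $\overline{\R^N_+}$ so no distributional terms arise; what it does not do is explain where the formula for $\Phi_1$ comes from, which is the content the paper's constructive ODE argument supplies. One cosmetic slip: you refer to "the boxed identity" but nothing is boxed; the reference is nonetheless unambiguous.
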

\begin{proof}
It holds that
\[\frac{x_N+1}{(|\bx|^2 + (x_N+1)^2)^{N-2 \over 2}} = -\(\frac{1}{N-4}\) \pa_N \left[ \frac{1}{(|\bx|^2 + (x_N+1)^2)^{N-4 \over 2}} \right] \quad \text{in } \R^N_+.\]
Thus, if we have a solution $\Phi_0$ of the equation
\[-\Delta \Phi_0 = \frac{1}{(|\bx|^2 + (x_N+1)^2)^{N-4 \over 2}} \quad \text{in } \R^N_+,\]
we will be able to choose
\begin{equation}\label{eq_Phi_11}
\Phi_1 = -\(\frac{1}{N-4}\) \pa_N \Phi_0.
\end{equation}

On the other hand, we see that
\[-\Delta [\Phi_0(\bx, x_N-1)] = \frac{1}{(|\bx|^2 + x_N^2)^{N-4 \over 2}} = \frac{1}{|x|^{N-4}} \quad \text{in } \R^n \times (1, \infty).\]
If we assume that $\Phi_0(\bx, x_N-1)$ is radial symmetric, i.e., $\phi_0(|x|) = \Phi_0(\bx, x_N-1)$, then it is reduced to
\[- \phi_0'' - \frac{N-1}{r} \phi_0' = \frac{1}{r^{N-4}} \quad \text{in } (0, \infty).\]
Its general solution is expressed as
\[\phi_0(r) = \begin{cases}
\dfrac{1}{4(N-6)} \dfrac{1}{r^{N-6}} + \dfrac{a_1}{r^{N-2}} + a_1' &\text{for } N = 5 \text{ or } N \ge 7,\\
-\dfrac{\log r}{4} + \dfrac{a_1}{r^4} + a_1' &\text{for } N = 6
\end{cases}\]
for $r \in (0,\infty)$ and $a_1,\, a_1' \in \R$. Consequently,
\begin{equation}\label{eq_Phi_0}
\Phi_0(x) = \begin{cases}
\dfrac{1}{4(N-6)} \dfrac{1}{(|\bx|^2 + (x_N+1)^2)^{N-6 \over 2}} + \dfrac{a_1}{(|\bx|^2 + (x_N+1)^2)^{N-2 \over 2}} + a_2 \\
\hspace{250pt} \text{for } N = 5 \text{ or } N \ge 7,\\
-\dfrac{1}{8} \log (|\bx|^2 + (x_N+1)^2) + \dfrac{a_1}{(|\bx|^2 + (x_N+1)^2)^2} + a_1' \\
\hspace{250pt} \text{for } N = 6
\end{cases}
\end{equation}
in $\R^N_+$.

By \eqref{eq_Phi_11} and \eqref{eq_Phi_0}, the assertion in the statement holds.
\end{proof}

\begin{lemma}\label{lemma_Phi_2}
The function
\[\Phi_2(x) = \frac{1}{2(N-4)} \frac{1}{(|\bx|^2 + (x_N+1)^2)^{N-4 \over 2}} + a_2 \frac{1}{(|\bx|^2 + (x_N+1)^2)^{N-2 \over 2}} + a_2' \quad \text{in } \R^N_+\]
for $a_2,\, a_2' \in \R$ satisfies
\begin{equation}\label{eq_Phi_2}
-\Delta \Phi_2 = \frac{1}{(|\bx|^2 + (x_N+1)^2)^{N-2 \over 2}} \quad \text{in } \R^N_+.
\end{equation}
\end{lemma}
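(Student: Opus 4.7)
The proof strategy is essentially parallel to that of Lemma \ref{lemma_Phi_1}: reduce to a radial ODE after translation, solve it explicitly, and match the resulting expression against the stated form of $\Phi_2$. Since the right-hand side $(|\bx|^2 + (x_N+1)^2)^{-(N-2)/2}$ depends only on the quantity $r = |(\bx, x_N+1)|$, translation invariance of the Laplacian lets me look for a solution of the form $\Phi_2(x) = \phi_2(r)$. Under this ansatz the PDE reduces to the ODE
\[
-\phi_2''(r) - \frac{N-1}{r}\phi_2'(r) = \frac{1}{r^{N-2}} \quad \text{on } (0,\infty).
\]

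The plan is to rewrite the ODE in the compact form $-(r^{N-1}\phi_2')' = r$, which integrates once to give
\[
r^{N-1}\phi_2'(r) = -\frac{r^2}{2} + C
\]
for some constant $C \in \R$, and hence
\[
\phi_2'(r) = -\frac{1}{2r^{N-3}} + \frac{C}{r^{N-1}}.
\]
A second integration (using $N \ge 5$ so that $N-4 \ge 1$ and the first antiderivative is not logarithmic) yields
\[
\phi_2(r) = \frac{1}{2(N-4)}\,\frac{1}{r^{N-4}} + \frac{a_2}{r^{N-2}} + a_2',
\]
where I relabel $a_2 = -C/(N-2)$ and let $a_2'$ denote the integration constant. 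Plugging $r = \sqrt{|\bx|^2 + (x_N+1)^2}$ produces precisely the function in the statement.

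The verification step, namely that $\Phi_2$ so constructed actually satisfies \eqref{eq_Phi_2} on $\R^N_+$, is immediate: the construction is done on all of $\R^N \setminus \{(0,-1)\}$, and the singularity $(0,-1) \notin \overline{\R^N_+}$, so no boundary subtleties enter. There is no genuine obstacle here; the only mild point of caution is the dimensional restriction $N \ge 5$ (so that the first term is a negative power of $r$ rather than a logarithm, as happened for $N=6$ in Lemma \ref{lemma_Phi_1} in one step lower), which is already in force by the opening line of the appendix.
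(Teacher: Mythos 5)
Your proof is correct and follows essentially the same route as the paper's: translate by $(0,-1)$, impose the radial ansatz, and solve the resulting ODE $-\phi_2'' - \frac{N-1}{r}\phi_2' = r^{-(N-2)}$, whose general solution the paper simply records while you derive it explicitly via $-(r^{N-1}\phi_2')' = r$. The remarks on the location of the singularity and on the restriction $N \ge 5$ are accurate and consistent with the paper's standing assumption in the appendix.
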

\begin{proof}
Equation \eqref{eq_Phi_2} is equivalent to
\[-\Delta [\Phi_2(\bx,x_N-1)] = \frac{1}{(|\bx|^2 + x_N^2)^{N-2 \over 2}} = \frac{1}{|x|^{N-2}} \quad \text{in } \R^N_+.\]
If we assume that $\Phi_2(\bx, x_N-1)$ is radial symmetric, i.e., $\phi_2(|x|) = \Phi_2(\bx, x_N-1)$, then it is reduced to
\[- \phi_2'' - \frac{n}{r} \phi_2' = \frac{1}{r^{N-2}} \quad \text{in } (0, \infty).\]
The general solution is expressed as
\[\phi_2(r) = \frac{1}{2(N-4)r^{N-4}} + \frac{a_2}{r^{N-2}} + a_2'\]
for $r \in (0,\infty)$ and $a_2,\, a_2' \in \R$. As a result, the assertion in the statement holds.
\end{proof}

\begin{cor}\label{cor_Phi}
The function
\[\begin{aligned}
(\Phi_1 - \Phi_2)(x) &= \frac{1}{4(N-4)} \frac{x_N-1}{(|\bx|^2 + (x_N+1)^2)^{N-4 \over 2}} + a_1 \frac{x_N+1}{(|\bx|^2 + (x_N+1)^2)^{N \over 2}} \\
&+ a_2 \frac{1}{(|\bx|^2 + (x_N+1)^2)^{N-2 \over 2}} + a_2'
\end{aligned} \quad \text{in } \R^N_+\]
for $a_1,\, a_2,\, a_2' \in \R$ satisfies
\[-\Delta (\Phi_1 - \Phi_2) = \frac{x_N}{(|\bx|^2 + (x_N+1)^2)^{N-2 \over 2}} = x_N W_{1,0} \quad \text{in } \R^N_+.\]
\end{cor}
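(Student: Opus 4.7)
The corollary is essentially an immediate consequence of the two preceding lemmas, so my plan is to combine them directly. Subtracting the equation in Lemma \ref{lemma_Phi_2} from the equation in Lemma \ref{lemma_Phi_1}, I would get
\[
-\Delta(\Phi_1-\Phi_2)=\frac{(x_N+1)-1}{(|\bar x|^2+(x_N+1)^2)^{(N-2)/2}}=\frac{x_N}{(|\bar x|^2+(x_N+1)^2)^{(N-2)/2}},
\]
which is exactly $x_N W_{1,0}$ by definition \eqref{eq_W_lx} with $\lambda=1$, $\xi=0$. So the only content left is to verify that the explicit expression in the statement indeed equals the pointwise difference of the formulas for $\Phi_1$ and $\Phi_2$ provided in the two lemmas.

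For that, I would merge the two $(|\bar x|^2+(x_N+1)^2)^{-(N-4)/2}$ terms: the coefficient of that denominator from $\Phi_1-\Phi_2$ is
\[
\frac{x_N+1}{4(N-4)}-\frac{1}{2(N-4)}=\frac{x_N-1}{4(N-4)},
\]
which is precisely the first summand displayed in the corollary. The $a_1$-term is simply transported from $\Phi_1$ unchanged, while the $a_2$-term and the additive constant $a_2'$ come from $-\Phi_2$ with the opposite sign; since $a_2,a_2'\in\mathbb{R}$ are arbitrary, I would absorb the minus signs into a harmless relabeling so that the formula matches the one in the statement.

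There is no real obstacle here: the harder computations have already been done in Lemmas \ref{lemma_Phi_1} and \ref{lemma_Phi_2}, where the radial reduction and the ODE were solved. The only thing to be slightly careful about is that the arbitrary constants in the two lemmas are independent, so the $(a_1,a_2,a_2')$ appearing in the corollary are just a convenient renaming of the free parameters produced by the two previous lemmas; this also explains why no parameter analogous to $a_1'$ appears, as the additive constant coming from $\Phi_1$ can be absorbed into $a_2'$.
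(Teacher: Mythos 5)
Your proposal is correct and matches the paper's proof, which simply declares the corollary a direct consequence of Lemmas \ref{lemma_Phi_1} and \ref{lemma_Phi_2}; you have merely spelled out the (routine) subtraction and the merging of the $(|\bx|^2+(x_N+1)^2)^{-(N-4)/2}$ terms. The only stray remark is about absorbing an additive constant "coming from $\Phi_1$" — as stated, $\Phi_1$ has no additive constant (the $a_1'$ lives only in the intermediate $\Phi_0$ and is killed by $\pa_N$) — but this does not affect the argument.
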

\begin{proof}
It is a direct consequence of Lemmas \ref{lemma_Phi_1} and \ref{lemma_Phi_2}.
\end{proof}

\begin{proof}[Completion of the proof of Lemma \ref{lemma_Phi}]
Define $\wPh_{ij} = \pa_{ij} (\Phi_1 - \Phi_2)$ so that $\Phi = 2\pi_{ij} \wPh_{ij}$. By Corollary \ref{cor_Phi},
\[\begin{aligned}
\wPh_{ij}(x) &= -\frac{x_N-1}{4} \left[ \frac{\delta_{ij}}{(|\bx|^2 + (x_N+1)^2)^{N-2 \over 2}} - (N-2)\frac{x_ix_j}{(|\bx|^2 + (x_N+1)^2)^{N \over 2}} \right] \\
&\ + a_1(x_N+1) \left[ \frac{\delta_{ij}}{(|\bx|^2 + (x_N+1)^2)^{N+2 \over 2}} - (N+2)\frac{x_ix_j}{(|\bx|^2 + (x_N+1)^2)^{N+4 \over 2}} \right] \\
&\ + a_2 \left[ \frac{\delta_{ij}}{(|\bx|^2 + (x_N+1)^2)^{N \over 2}} - N\frac{x_ix_j}{(|\bx|^2 + (x_N+1)^2)^{N+2 \over 2}} \right]
\end{aligned}
\quad \text{in } \R^N_+.\]
Since the trace of $\pi$ is assumed to be 0, we have \eqref{eq_Phi_n1}.
This completes the proof.
\end{proof}

\begin{proof}[Completion of the proof of Lemma \ref{lemma_U}]
It follows from \eqref{eq_lin} and \eqref{eq_Phi_n} that $U$ is harmonic in $\R^N_+$. Note also that
\[\lim_{x_N \to 0} \dfrac{\pa U}{\pa x_N} + N w_{1,0}^{2 \over N-2} U
= - \lim_{x_N \to 0} \dfrac{\pa \Phi}{\pa x_N} - N w_{1,0}^{2 \over N-2} \Phi \quad \text{on } \R^n.\]
Plugging \eqref{eq_Phi_n1} into the right-hand side, we find the boundary condition that $U$ satisfies.
\end{proof}

\medskip \noindent
\textbf{Acknowledgement.} S. Kim is supported by Basic Science Research Program through the National Research Foundation of Korea(NRF) funded by the Ministry of Education (NRF2017R1C1B5076384),
and the associate member problem of Korea institute for advanced study(KIAS).
M. Musso has been supported by Fondecyt grant 1160135.
The research of J. Wei is partially supported by NSERC of Canada.

\end{document}